\title{A characterization of simultaneous optimization, majorization, and (bi)submodular polyhedra}
\author{Martijn H. H. Schoot Uiterkamp \\ Tilburg University}
\date{\today}
\newtheorem{theorem}{Theorem}
\newtheorem{lemma}{Lemma}
\newtheorem{corollary}{Corollary}
\newtheorem{condition}{Condition}
\theoremstyle{remark}
\newtheorem{remark}{Remark}
\begin{document}

\maketitle

\begin{abstract}
Motivated by resource allocation problems (RAPs) in power management applications, we investigate solutions to optimization problems that simultaneously minimize an entire class of objective functions. It is straightforward to show empirically that such solutions do not exist for most optimization problems. However, little is known on why this is the case and whether a characterization exists of problems that do have such solutions. In this article, we answer these questions by linking the existence of solutions that simultaneously optimize the class of Schur-convex functions, called least majorized elements, to (bi)submodular functions and the corresponding polyhedra. For this, we introduce a generalization of majorization and least majorized elements, called $(a,b)$-majorization and least $(a,b)$-majorized elements, and characterize the feasible sets of problems that have such elements in terms of these polyhedra. Hereby, we also obtain new characterizations of base and bisubmodular polyhedra that extend classical characterizations of these sets in terms of optimal greedy algorithms for linear optimization from the 1970s. We discuss the implications of our results for RAPs in power management applications and use the results to derive a new characterization of convex cooperative games and new properties of optimal estimators of specific regularized regression problems. In general, our results highlight the combinatorial nature of simultaneously optimizing solutions and, at the same time, provide a theoretical explanation for the observation that such solutions generally do not exist.
\end{abstract}

\section{Introduction}

\subsection{Resource allocation and simultaneous optimization}

Efficient resource allocation is an important challenge for many modern computing systems. The general goal in such resource allocation problems (RAPs) is to divide the available resource over individual users or system components as to maximize the overall system utility of this division (or, equivalently, to minimize the overall system cost). These problems occur in many different engineering applications such as telecommunications \cite{Palomar2005,Shams2014}, processor scheduling \cite{Gerards2016}, and regularized learning \cite{Dai2006,Mairal2011} (see also the overview in \cite{Patriksson2008}).

For many allocation problems, there has been interest in the development of algorithms that find allocations with good optimality guarantees that hold for multiple utility or cost functions simultaneously. One motivation for this is that different definitions of the cost of an allocation may exist that are, ideally, minimized simultaneously. For instance, in the context of smart storage systems in distribution grids, it is crucial that the system is operated so that the stress put on the grid is reduced as much as possible. This stress can be reduced in several ways, for instance by minimizing the peak energy consumption or flattening the overall load profile as much as possible. This leads to different objectives and corresponding cost functions that are preferably optimized at the same time (see also \cite{SchootUiterkamp2021}). 

Another motivation is that it is often hard in practice to specify the utility or cost function exactly due to, e.g., missing data or the absence of objective measures for such a function.  Instead, usually only some structural properties such as concavity or symmetry can be observed or assumed with reasonable certainty. Therefore, it is desirable to have a solution procedure that provides good solutions regardless of the exact description of the objective function. One example of such an application is speed scaling, where the goal is to schedule tasks on a processor and determine execution speeds for these tasks while minimizing the total energy usage of the processor and respecting any restrictions on the execution of the tasks such as deadlines. A common observation in this area is that the total energy usage depends primarily on the chosen speeds and that this dependence is convex. However, the exact relation between these two quantities depends on the specific properties of the used processor and is therefore generally unknown (see also \cite{Gerards2016}).

Most research on finding allocations with good performance guarantees for whole classes of    objective    functions focus on approximate solutions rather than optimal ones (see, e.g., \cite{Goel2006}) The main reason for this is that for many problems of interest there is no solution that simultaneously optimizes whole classes of utility or cost functions. However, little is known about why this is the case and whether there exists a characterization of problems that do have such a solution. These questions are the main motivation for the research in this article.

\subsection{Simultaneous optimization, majorization, and (bi)submodularity}

The existence of solutions that minimize whole classes of objective functions is closely related to the concept of majorization \cite{Marshall2011}. Majorization is a formalization of the vague notion that the components of one vector are less spread out (or closer together) than those of another vector. We say that, in this case, the former vector is \emph{majorized} by the latter. Majorization can be characterized in several ways, one of which is via Schur-convex functions. More precisely, given two vectors $x,y \in \mathbb{R}^n$,  $x$ is majorized by $y$ if and only if $\Phi(x) \leq \Phi(y)$ for all Schur-convex functions $\Phi$. Several variants of majorization exist that can also be characterized in terms of better objective values for particular classes of Schur-convex functions. Examples of these are weak submajorization, weak supermajorization, and weak absolute majorization, which are characterized by better objective values for non-decreasing, non-increasing, and element-wise even Schur-convex functions, respectively (see also \cite{Marshall2011} and Section~\ref{sec_maj_basic}).

The class of Schur-convex functions is broad and includes, e.g., symmetric quasi-convex functions. In particular, many common utility and cost functions in RAPs are special cases of Schur-convex functions. As a consequence, majorization and its characterization in terms of Schur-convex functions have many applications in, e.g., telecommunications \cite{Jorswiek2006} and economics \cite{Arnold2018} (see also \cite{Marshall2011}).

   Solutions that are majorized by all other vectors in a given feasible set, and thereby also simultaneously minimize all Schur-convex functions over this set, are known as least majorized elements.    The existence of such elements has hardly been investigated in the literature. In fact, the only non-trivial sets for which the existence of such elements has been established are (submodular) base polyhedra \cite{Dutta1989} (see also the historical overview in \cite{Frank2020}).

Several other connections exist between majorization    and (bi)submodularity.    For instance, majorization can be characterized in terms of submodular functions (see, e.g., page~44 of \cite{Fujishige2005}) and a similar characterization of weak absolute majorization exists in terms of bisubmodular functions \cite{Zhan2005}. Furthermore, several combinatorial sets that are in some way related to base polyhedra have least weakly sub- or supermajorized elements, i.e., vectors that are weakly sub- or supermajorized by all other vectors in the given set. Examples of such sets are bounded generalized polymatroids \cite{Tamir1995} and jump systems \cite{Ando1996_jump}.

Majorization and least majorized elements are also important concepts in economical theories of fair allocation of resources (see, e.g., \cite{Arnold2018} for an overview). In particular, they play an important role in convex cooperative games, which have the unique property that their core, i.e., the set of all payoffs where no group of players has an incentive to form their own competitive coalition, is a base polyhedron (see also \cite{Ichiishi1981}). Therefore, several key properties of submodular functions and base polyhedra can be proven using tools from cooperative game theory. In particular, existence results for particular solution vectors in the core of convex games translate directly into existence results of such vectors for base polyhedra. Examples of these are the egalitarian solution \cite{Dutta1989} and min-max fair and utilitarian solutions for uniform utility Nash and nonsymmetric bargaining games \cite{Chakrabarty2014}    (we come back to this in Section~\ref{sec_char}).

\subsection{Contributions}

A natural question is whether the existence of least (weakly absolutely) majorized and least weakly sub-or supermajorized elements is unique for base polyhedra and related sets. As discussed in the previous subsection, this directly relates to the question to what extent a characterization exists of optimization problems and solutions to these problems that simultaneously optimize multiple    (Schur-convex)    objectives. In this article, we provide an answer    to    these questions and, in the process, reveal a strong relation between majorization and base, submodular, supermodular, and bisubmodular polyhedra.

If we consider the ``classical" concept of majorization as discussed in the previous subsection, the answer to the first question is negative. For instance, given $R \in \mathbb{R}$, the vector $R(\frac{1}{n},\ldots,\frac{1}{n})$ is a least majorized element of any subset of $\mathbb{R}^n$ that contains it and whose vectors have the same element-sum $R$. This observation follows directly from Jensen's inequality. Similarly, the zero vector is a least weakly absolutely majorized element of any subset of $\mathbb{R}^n$ that contains it. These observations show that sets admitting least majorized elements are not necessarily ``nice", i.e., polyhedral or convex, and thus also not necessarily base polyhedra. Therefore, it seems unlikely that a characterization can be given of sets that have least majorized elements.

These observations motivate us to introduce a more general notion of majorization that we call \emph{$(a,b)$-majorization}. This concept can be interpreted as majorization relative to a positive scaling vector $a$ and a shifting vector $b$ and is a slight generalization of the concept of $d$-majorization in \cite{Veinott1971}. In terms of optimization, $(a,b)$-majorization is concerned with order-preserving objective functions of the form $\sum_{i \in N} a_i \phi (\frac{x_i + b_i}{a_i}  )$, where $\phi$ is a continuous convex function.    Such objective functions naturally occur in energy scheduling applications such as power allocation for multi-channel communication systems and speed scaling for computer processors \cite{SchootUiterkamp2021} and price mechanism design \cite{Reijnders2022}.   Analogously to ``classical" majorization, we say that $x$ is $(a,b)$-majorized by $y$ if $\sum_{i \in N} a_i \phi (\frac{x_i + b_i}{a_i}) \leq  \sum_{i \in N} a_i \phi (\frac{y_i + b_i}{a_i} )$ for all continuous convex functions $\phi$. Moreover, given a set $C \subseteq \mathbb{R}^n$, we say that $x^* \in C$ is a least $(a,b)$-majorized element of $C$ if it is an optimal solution to the problem $\min_{x \in C} \sum_{i \in N} a_i \phi ( \frac{x_i + b_i}{a_i}  )$ for any choice of continuous convex function $\phi$. Analogously to ``classical" majorization, we also define similar generalizations of weak submajorization, weak supermajorization, and weak absolute majorization and of the corresponding least majorized elements    (see Section~\ref{sec_maj_gen}).

This generalization of majorization allows us to obtain a positive answer to the questions posed at the start of this subsection. For this, we build upon a result in earlier work \cite{SchootUiterkamp2021} that shows the existence of least $(a,b)$-majorized elements for base polyhedra. As a first contribution, in the present article, we extend this result by establishing the existence of least weakly $(a,b)$-sub- and $(a,b)$-supermajorized and least weakly absolutely $(a,b)$-majorized elements for several related sets including submodular, supermodular, and bisubmodular polyhedra. Thereby, we generalize some results in \cite{Dutta1989,Tamir1995} on the existence of least weakly sub- and supermajorized elements for special cases of these polyhedra. In fact, for the case of weak absolute majorization, no such existence results were known for these sets even for the case of ``classical" majorization (i.e., when $a=1$ and $b=0$) and we are the first to establish them.

As our second and main contribution, we show that the existence of least $(a,b)$-majorized elements for all choices of scaling and shifting vectors $a$ and $b$ is a unique property of base polyhedra within the class of compact convex sets. Hereby, we prove a reverse of the earlier mentioned result in \cite{SchootUiterkamp2021}. Moreover, still within the class of compact convex sets, we show that the existence of least weakly absolutely $(a,b)$-majorized elements for all choices of $a$ and $b$ is a unique property of bisubmodular polyhedra. Thus, we obtain new characterizations of base and bisubmodular polyhedra in terms of (weak absolute) $(a,b)$-majorization. Additionally, we completely characterize sets admitting least weakly $(a,b)$-sub- or $(a,b)$-supermajorized elements for all choices of $a$ and $b$ as those that are contained in a super- or submodular polyhedron, respectively, and contain the corresponding base polyhedron.

For $a=1$, all these results carry over to the integral versions of these sets, i.e., the collection of integral points in the set. Here, the class of compact convex sets is replaced by that of bounded integral hole-free sets, i.e., bounded sets that contain exactly all integral points in their convex hull. All characterization results for both set versions are summarized in Theorems~\ref{th_super}-\ref{th_bi_int}.

The existence results are proven based on the earlier mentioned result for base polyhedra in earlier work \cite{SchootUiterkamp2021} and on the relation of the aforementioned sets with these polyhedra. For instance, the existence of least weakly $(a,b)$-supermajorized elements for submodular polyhedra can be proven using the fact that such polyhedra contain vectors that are in some sense maximal and thus necessarily minimize any non-increasing function over such polyhedra. These maximal vectors form a base polyhedron within the given submodular polyhedron. Furthermore, the existence of least weakly absolutely $(a,b)$-majorized elements for bisubmodular polyhedra uses the fact that such polyhedra are the intersection of (reflections of) particular submodular polyhedra and contain all corresponding (reflected) base polyhedra (see also \cite{Ando1996,Fujishige2005}).

The proof of the characterization results for base and bisubmodular polyhedra is inspired by classical characterizations for the optimality of greedy algorithms for linear optimization over these polyhedra \cite{Edmonds2003,Dunstan1973,Nakamura1988}. The latter characterizations state that for each permutation of the index set there exists a vector that simultaneously minimizes any linear cost function over these polyhedra whose coefficients are monotonically increasing or decreasing under this permutation. In this light, our characterization results can be seen as an extension of these classical results to    (Schur-)convex    objective functions.

We demonstrate the impact of our results in three different fields. First, we focus on RAPs in power management and in particular on energy storage scheduling problems. In these problems, the utility or cost of an allocation depends on the actual allocation plus a fixed (uncontrollable) load such as static energy consumption    (see also \cite{vanderKlauw2017}). Our results show that RAPs over base polyhedra are the only convex (concave) RAPs where for any given fixed load, there exists an allocation that simultaneously optimizes any symmetric (quasi-)convex ((quasi-)concave) objective function of the combined load and allocation.    This work therefore provides a theoretical explanation for the necessity of objective trade-offs for more general RAPs, even when those objectives do not conflict at first glance (e.g., when the considered objective functions are all symmetric utility functions). This motivates the necessity and development of multi-objective optimization algorithms for the operation of the corresponding systems.

The second application is in cooperative game theory. We obtain a new characterization of convex cooperative games with transferable utility in terms of the existence of egalitarian solutions. Although this characterization is simply a direct reformulation of one of our main results (Theorem~\ref{th_base}) in game-theoretical terms, no similar characterizations have been proposed before to the best of our knowledge.

Finally, we focus on regularized regression problems and their corresponding optimal estimators. We show that for certain regression problems arising in orthogonal design experiments, the optimal regression estimators for LASSO and max-norm regularization are least weakly absolutely $(a,b)$-majorized elements, while the optimal estimator for ridge regression is not. These observations might be a stepping stone to demonstrate the optimality of these estimators for other regression problems.

Summarizing, our concrete contributions are as follows:
\begin{enumerate}
\item
We introduce a natural generalization of majorization, called $(a,b)$-majorization, and investigate its properties and relation to ``classical" majorization;
\item
We completely characterize all compact convex sets and bounded integral hole-free sets that have least weakly $(a,b)$-sub- or $(a,b)$-supermajorized or least (weakly absolutely) $(a,b)$-majorized elements for all pairs of scaling vectors $a$ and shifting vectors $b$ in terms of submodular, supermodular, base, and bisubmodular polyhedra;
\item
We characterize base and bisubmodular polyhedra in terms of the existence of least $(a,b)$ majorized and least weakly absolutely $(a,b)$-majorized elements for all pairs of scaling vectors $a$ and shifting vectors $b$, respectively;
\item
On the application side, we provide a theoretical explanation for the presence of conflicting utility or cost objectives in more general RAPs, give a new characterization of convex cooperative games, and establish new properties of a specific class of regression estimators.
\end{enumerate}

The outline of the remainder of this article is as follows. In Section~\ref{sec_pre}, we introduce the necessary concepts and results regarding (bi)submodular functions and the related polyhedra and in Section~\ref{sec_maj}, we introduce $(a,b)$-majorization. In Section~\ref{sec_existence}, we prove the existence of least $(a,b)$-majorized and related elements for submodular, supermodular, and bisubmodular polyhedra and related sets and in Section~\ref{sec_from_maj_to_sub}, we derive a complete characterization of these elements in terms of these polyhedra. In Section~\ref{sec_appl}, we demonstrate the impact of these characterization result in several applications and in Section~\ref{sec_concl}, we state our conclusions and directions for future research.

\section{Preliminaries}
\label{sec_pre}

In this section, we introduce some general notation and all concepts and known results on (bi)submodularity that we require for this article. 

\subsection{General notation}

Throughout, we denote the index set by $N := \lbrace 1,\ldots,n \rbrace$, the power set of $N$ by $2^N := \lbrace S \ | \ S \subseteq N \rbrace$. Given a vector $x \in \mathbb{R}^n$ and a subset $S \in 2^N$, we denote the sum of all elements of $x$ whose indices are in $S$ by $x(S) := \sum_{i \in S} x_i$. We denote the convex hull of $C$ by $\text{co}(C)$. For convenience, we say that a given vector, set, or function is \emph{integral} if it is integer-valued. We call an integral set $C \subseteq \mathbb{Z}^n$ \emph{hole-free} if it    consists of    all integral points in its convex hull, i.e., if $C = \text{co}(C) \cap \mathbb{Z}^n$. Moreover, we say that a function $\Phi$ is continuous and convex on $C$ if there exists a continuous and convex function $\tilde{\Phi}$ on $\mathbb{R}^n$ that coincides with $\Phi$ on $C$. Given two vectors $x,y \in \mathbb{R}^n$, we write $x \leq y$ if $x_i \leq y_i$ for all $i \in N$ and $x < y$ if, in addition, we have $x_i < y_i$ for at least one index $i \in N$. For each $i \in N$, we denote by $e^i$ the unit base vector of dimension~$n$ corresponding to $i$, i.e., $e^i_k = 1$ if $k=i$ and $e^i_k =0 $ if $k \neq i$.

\subsection{Submodularity}
A set function $f{:} \ 2^N \rightarrow \mathbb{R}^n$ is \emph{submodular} if $f(A) + f(B) \geq f(A \cup B) + f(A \cap B)$ for all $A,B \in 2^N$, where we assume that $f(\emptyset) = 0$. Moreover, we say that a set function $g$ is \emph{supermodular} if $-g$ is submodular. Note that if $f$ is submodular, the function $f^{\#}(A) := f(N) - f(N \backslash A)$ is supermodular and we call $f^{\#}$ the \emph{dual supermodular function} of $f$. Analogously, we call the submodular function $g^{\#}(A) := g(N) - g(N \backslash A)$ the \emph{dual submodular function} of $g$.

Given a submodular function $f$, we denote by $P(f)$ and $B(f)$ the submodular and base polyhedron    defined    by $f$, which are respectively given by
\begin{align*}
    P(f) &:= \lbrace x \in \mathbb{R}^n \ | \ x(A) \leq f(A) \ \forall A \in 2^N \rbrace; \\
B(f) &:= \lbrace x \in P(f) \ | \ x(N) = f(N) \rbrace.
\end{align*}
Analogously, given a supermodular function $g$, we denote by $P_{\text{sup}}(g)$ and $B_{\text{sup}}(g)$ the supermodular and base  polyhedron    defined    by $g$, which are respectively given by
\begin{align*}
 P_{\text{sup}}(g) &:= \lbrace x \in \mathbb{R}^n \ | \ x(A) \geq g(A) \ \forall A \in 2^N \rbrace; \\
B_{\text{sup}}(g) &:= \lbrace x \in P_{\text{sup}}(g) \ | \ x(N) = g(N) \rbrace.
\end{align*}
Note, that $B(f) = B_{\text{sup}}(f^{\#})$ for any submodular function $f$ since $f(N) = f^{\#}(N)$ and since, for any $A \in 2^N$, $x \in B(f)$ implies $x(A) = f(N) - x(N \backslash A) \geq f(N) - f(N \backslash A) = f^{\#}(A)$ and $x \in B_{\text{sup}}(f^{\#})$ implies $x(A) = f^{\#}(N) - x(N \backslash A) \leq f^{\#}(N) - f^{\#}(N \backslash A) = f(A)$.

We also define the \emph{integral} submodular (supermodular) and base polyhedron    defined    by an integral submodular function $f$ (integral supermodular function $g$) as the set of all integer-valued points in $P(f)$, $B(f)$, $P_{\text{sup}}(g)$, and $B_{\text{sup}}(g)$, respectively:
\begin{align*}
    P^{\mathbb{Z}}(f) &:= P(f) \cap \mathbb{Z}^n; &
    B^{\mathbb{Z}}(f) &:= B(f) \cap \mathbb{Z}^n; \\
    P_{\text{sup}}^{\mathbb{Z}}(g) &:= P_{\text{sup}}(g) \cap \mathbb{Z}^n; &
    B_{\text{sup}}^{\mathbb{Z}}(g) &:= B_{\text{sup}}(g) \cap \mathbb{Z}^n.
\end{align*}
Note that all these sets are hole-free.

Given a submodular function $f$ and a vector $v \in \mathbb{R}^n$, we define the \emph{reduction} of $f$ by $v$ as the set function
\begin{equation*}
f^v (A) := \min(f(Z) + v(A \backslash Z) \ | \ Z \subseteq A ), \quad A \in 2^N.
\end{equation*}
It follows from, e.g., \cite[Theorem 3.3]{Fujishige2005} that also $f^v$ is submodular and that 
\begin{align*}
P(f^v) &= \lbrace x \in P(f) \ | \ x \leq v \rbrace; &
P^{\mathbb{Z}}(f^v) &= \lbrace x \in P^{\mathbb{Z}}(f) \ | \ x \leq v \rbrace. 
\end{align*}
Analogously, given a supermodular function $g$, the reduction of $g$ by $v$ is defined as
\begin{equation*}
g^v (A) := \max(g(Z) + v(A \backslash Z) \ | \ Z \subseteq A ), \quad A \in 2^N.
\end{equation*}
Moreover, $g^v$ is supermodular and 
\begin{align*}
P_{\text{sup}}(g^v) &= \lbrace x \in P_{\text{sup}}(g) \ | \ x \geq v \rbrace; &
P_{\text{sup}}^{\mathbb{Z}}(g^v) &= \lbrace x \in P_{\text{sup}}^{\mathbb{Z}}(g) \ | \ x \geq v \rbrace. 
\end{align*}

Lemma~\ref{lemma_ineq} shows that we can always increase (decrease) the entries of a vector in a submodular (supermodular) polyhedron in such a way that it becomes an element of the corresponding base polyhedron:
\begin{lemma}[e.g., Theorem 2.3 in \cite{Fujishige2005}]
Let a (integral) submodular function $f$ and a (integral) supermodular function $g$ be given. Then the following hold:
\begin{itemize}
\item
For each $x \in P(f)$ ($x \in P^{\mathbb{Z}}(f)$) there exists $y \in B(f)$ ($y \in B^{\mathbb{Z}}(f)$) such that $x \leq y$;
\item
For each $x \in P_{\text{sup}}(g)$ ($x \in P_{\text{sup}}^{\mathbb{Z}}(g)$) there exists $y \in B_{\text{sup}}(g)$ ($y \in B_{\text{sup}}^{\mathbb{Z}}(g)$) such that $x \geq y$.
\end{itemize}
\label{lemma_ineq}
\end{lemma}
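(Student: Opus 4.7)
The plan is to establish the submodular statement via a simple maximization argument and derive the supermodular statement from it by the reflection $x \mapsto -x$, using that $-g$ is submodular whenever $g$ is supermodular and that this reflection carries $P_{\text{sup}}(g)$ to $P(-g)$ and $B_{\text{sup}}(g)$ to $B(-g)$. The integral versions will then follow by observing that the single lifting step in the argument can be chosen to have integer size.

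For the submodular case, given $x \in P(f)$, I would work in the set $S := \{ z \in P(f) : z \geq x \}$, which is nonempty (it contains $x$) and bounded, since for each $i \in N$ the constraint $z(\{i\}) \leq f(\{i\})$ forces $z_i \in [x_i, f(\{i\})]$. Hence $S$ is compact, and I pick $y \in S$ maximizing the linear functional $z \mapsto z(N)$. The claim is that $y \in B(f)$. Suppose for contradiction that $y(N) < f(N)$, and let $\mathcal{F} := F(f,y)$ denote the family of sets tight at $y$. By Lemma~\ref{lemma_sub}, $\mathcal{F}$ is closed under union, so $A^* := \bigcup_{A \in \mathcal{F}} A$ itself lies in $\mathcal{F}$; since $N \notin \mathcal{F}$ by assumption, $A^* \subsetneq N$. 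Picking any $i \in N \setminus A^*$, every $A$ containing $i$ must be strictly loose (otherwise $A \in \mathcal{F}$, forcing $i \in A^*$), so the quantity $\epsilon := \min\{ f(A) - y(A) : A \ni i \}$ is strictly positive. But then $y + \epsilon e^i$ still satisfies every submodular constraint and lies in $S$, while $(y + \epsilon e^i)(N) > y(N)$, contradicting the maximality of $y$.

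For the supermodular statement, applying the submodular case just proved to $-x \in P(-g)$ yields some $y' \in B(-g) = -B_{\text{sup}}(g)$ with $y' \geq -x$, so $y := -y' \in B_{\text{sup}}(g)$ satisfies $y \leq x$. Equivalently, one can run the symmetric minimization argument directly on $\{ z \in P_{\text{sup}}(g) : z \leq x \}$ using the supermodular half of Lemma~\ref{lemma_sub}. For the integral versions, note that when $f$ (or $g$) and $x$ are integer-valued, the quantity $\epsilon$ above is the minimum of integer-valued differences and hence a positive integer, so the update $y \mapsto y + \epsilon e^i$ preserves integrality; equivalently, one restricts the optimization to the finite set $S \cap \mathbb{Z}^n$.

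The only real substance in the argument is the use of Lemma~\ref{lemma_sub}: its closure of tight sets under union is exactly what lets one locate a coordinate outside every tight constraint and hence find slack to absorb upward (or downward). Everything else---compactness of $S$, integrality of the lifting step, and the reflection used for the supermodular case---is routine once this observation is in hand.
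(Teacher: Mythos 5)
Your proof is correct and follows essentially the same route as the paper's: lift $x$ to a maximal point of $\{z \in P(f) : z \geq x\}$ and use the closure of tight sets under union (Lemma~\ref{lemma_sub}) to conclude that $N$ itself is tight, with the supermodular and integral cases handled by reflection and by restriction to integer points. Your version is marginally more careful than the paper's about why the maximizer exists (compactness) and why the slack $\epsilon$ is strictly positive, but the underlying argument is the same.
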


Finally, in Lemma~\ref{lemma_extreme}, we state the known result that the extreme points of a base polyhedron can be expressed analytically in terms of the submodular function    defining    the base polyhedron:
\begin{lemma}[e.g., Theorem 3.4 of \cite{Fujishige1983}]
For each extreme point $z$ of a (integral) base polyhedron    defined    by a (integral) submodular function $f$, there exists a permutation $\pi$ of $N$ such that $z_{\pi(1)} = f(\lbrace \pi(1) \rbrace)$ and $z_{\pi(k)} = f(\lbrace \pi(1),\ldots,\pi(k) \rbrace) - f(\lbrace \pi(1),\ldots,\pi(k-1) \rbrace$ for each $k \in \lbrace 2,\ldots, n \rbrace$.
\label{lemma_extreme}
\end{lemma}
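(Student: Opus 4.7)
The plan is to exploit the sublattice structure of the tight constraints at an extreme point, which is exactly what Lemma~\ref{lemma_sub} provides for submodular functions. Let $z$ be an extreme point of $B(f)$. Since $B(f)$ is defined by the inequalities $x(A) \leq f(A)$ for $A \in 2^N$ together with the equality $x(N) = f(N)$, and since an extreme point of a polyhedron is characterized by having a collection of tight constraints whose normal vectors span $\mathbb{R}^n$, the characteristic vectors $\{\chi^S : S \in F(f,z)\}$ must span $\mathbb{R}^n$. By Lemma~\ref{lemma_sub}, $F(f,z)$ is closed under union and intersection, and it contains $\emptyset$ (trivially tight) and $N$ (tight by definition of $B(f)$). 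Thus $F(f,z)$ is a sublattice of $2^N$ with top $N$ and bottom $\emptyset$.

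The key combinatorial step is to produce a maximal chain $\emptyset = S_0 \subsetneq S_1 \subsetneq \cdots \subsetneq S_n = N$ with $|S_k|=k$ and each $S_k \in F(f,z)$. I would argue as follows. Define an equivalence relation on $N$ by $i \equiv j$ iff every $S \in F(f,z)$ satisfies $i \in S \Leftrightarrow j \in S$. The characteristic vectors $\chi^S$ are constant on each equivalence class, so the number of classes is at least the rank of $\{\chi^S : S \in F(f,z)\}$, which equals $n$. Hence every class is a singleton. For each $i \in N$, let $A_i$ denote the smallest element of $F(f,z)$ containing $i$, which exists by closure under intersection. Distinctness of the singleton classes forces the sets $A_i$ to be pairwise distinct, and if $j \in A_i$ with $j \neq i$ then $A_j \subseteq A_i$ strictly. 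I would then fix any linear extension $\pi$ of the partial order induced by inclusion of the $A_i$'s, set $S_k := \{\pi(1),\ldots,\pi(k)\}$, and verify that $S_k = \bigcup_{j \leq k} A_{\pi(j)}$, hence $S_k \in F(f,z)$ by closure under union.

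Once the chain is in hand, the proof essentially writes itself: for each $k$,
\begin{equation*}
z_{\pi(k)} = z(S_k) - z(S_{k-1}) = f(S_k) - f(S_{k-1}) = f(\{\pi(1),\ldots,\pi(k)\}) - f(\{\pi(1),\ldots,\pi(k-1)\}),
\end{equation*}
with the $k=1$ case reducing to $z_{\pi(1)} = f(\{\pi(1)\})$. For the integral version, either one invokes the classical fact that integral submodular functions yield integral-vertex base polyhedra, or—more self-contained—one simply observes that the above formula immediately produces integer values of $z_{\pi(k)}$ whenever $f$ is integer-valued, so the same argument applied to extreme points of $B^{\mathbb{Z}}(f) = B(f) \cap \mathbb{Z}^n$ (which inherits the polyhedral structure) yields the claim.

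The main obstacle I anticipate is the lattice-to-chain construction: it is intuitively clear but requires the equivalence-class argument (or an equivalent induction on $|N \setminus S_k|$ passing to the contracted lattice $\{S \setminus S_k : S \in F(f,z), S \supseteq S_k\}$) to rigorously convert the rank hypothesis on $\{\chi^S\}$ into the existence of a maximal chain of the correct shape. Everything else is bookkeeping on top of Lemma~\ref{lemma_sub}.
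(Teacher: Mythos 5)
Your proof is correct. Note that the paper itself offers no proof of this lemma---it is quoted as a known result with a citation to Fujishige---so there is no in-paper argument to compare against; what you have written is essentially the standard textbook derivation. The chain of reasoning is sound at every step: an extreme point of $B(f)$ has tight-set characteristic vectors spanning $\mathbb{R}^n$; Lemma~\ref{lemma_sub} makes $F(f,z)$ a lattice containing $\emptyset$ and $N$; the spanning condition forces the separation equivalence classes to be singletons; the minimal tight sets $A_i$ are then pairwise distinct and any linear extension of their inclusion order yields a maximal chain $\emptyset = S_0 \subsetneq \cdots \subsetneq S_n = N$ of tight sets (your identity $S_k = \bigcup_{j \leq k} A_{\pi(j)}$ is the right way to see that each $S_k$ is tight); and telescoping gives the formula. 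The only mildly informal spot is the integral case: one should say explicitly that ``extreme point of $B^{\mathbb{Z}}(f)$'' means extreme point of $\mathrm{co}(B^{\mathbb{Z}}(f))$, and that for integral $f$ the real-case formula shows every vertex of $B(f)$ is integral, whence $\mathrm{co}(B^{\mathbb{Z}}(f)) = B(f)$ and the two vertex sets coincide---but that is exactly the observation you make, just compressed. No gaps.
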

For more background on submodular functions in general and the above lemmas in particular, we refer to \cite{Fujishige2005}.

\subsection{Bisubmodularity}
\label{sec_bisubmodular}

We denote the set of all ordered pairs $(S,T)$ of disjoint subsets of $N$ by $3^N := \lbrace (S,T) \ | \ S,T \subseteq N, \ S \cap T = \emptyset \rbrace$. Given an element $U = (S,T) \in 3^N$, we use the notation $U^+$ to denote $S$ and $U^-$ to denote $T$. Moreover, for any $X \subseteq N$    and vector $s \in \lbrace -1,1 \rbrace^n$,    we use the notation $X \ | \ s$ to denote the biset $(\lbrace i \in X \ | \ s_i = 1 \rbrace, \lbrace i \in X \ | \ s_i = -1 \rbrace)$. We use the symbols $\sqsubseteq$ and $\sqsupseteq$ to define the subset and superset    relations    on two elements of $3^N$, i.e., for any two pairs $(S_1,T_1),(S_2,T_2) \in 3^N$, we write $(S_1,T_1) \sqsubseteq (S_2,T_2)$ if $S_1 \subseteq S_2$ and $T_1 \subseteq T_2$ and we have $(S_1,T_1) \sqsupseteq (S_2,T_2)$ if $S_1 \supseteq S_2$ and $T_1 \supseteq T_2$. Moreover, we define the reduced union and intersection of $(S_1,T_1)$ and $(S_2,T_2)$ as
\begin{align*}
(S_1,T_1) \sqcup (S_2,T_2) &:= ((S_1 \cup S_2) \backslash (T_1 \cup T_2), (T_1 \cup T_2) \backslash (S_1 \cup S_2)); \\
(S_1,T_1) \sqcap (S_2,T_2) &:= (S_1 \cap S_2, T_1 \cap T_2),
\end{align*}
which can be seen as a generalization of the usual union and intersection operations, respectively.

A biset function $h{:} \ 3^N \rightarrow \mathbb{R}$ with $h(\emptyset,\emptyset) := 0$ is \emph{bisubmodular} if for each two pairs $(S_1,T_1),(S_2,T_2) \in 3^N$ we have
$$h(S_1,T_1) + h(S_2,T_2) \geq h((S_1,T_1) \sqcup (S_2,T_2)) + h((S_1,T_1) \sqcap (S_2,T_2)).$$ 
Given a bisubmodular function $h$, the \emph{bisubmodular polyhedron}    defined    by $h$ is the set
\begin{equation*}
\tilde{B}(h) := \lbrace x \in \mathbb{R}^n \ | \ x(S) - x(T) \leq h(S,T) \ \forall (S,T) \in 3^N \rbrace.
\end{equation*}
Analogously to submodular functions and polyhedra, we define the \emph{integral} bisubmodular polyhedron    defined    by an integral bisubmodular function $h$ as the set of all integral points in $\tilde{B}(h)$, i.e., $\tilde{B}^{\mathbb{Z}}(h) := \tilde{B}(h) \cap \mathbb{Z}^n$. Note that $\tilde{B}^{\mathbb{Z}}(h)$ is hole-free.

Given a pair $(S,T) \in 3^N$ and a bisubmodular function $h$, we denote by $2^{(S,T)}$ the set of all pairs $(X,Y)$ with $(X,Y) \sqsubseteq (S,T)$ and we define the set function $h_{(S,T)}(X) := h(S \cap X, T \cap X)$. Note that $h_{(S,T)}$ is submodular since we have the following for any $X,Y \in 2^{N}$:
\begin{align*}
& \quad \ h_{(S,T)}(X) + h_{(S,T)}(Y) \\
&=
h(S \cap X, T \cap X) + h(S \cap Y, T \cap Y) \\
& \geq h((S \cap X, T \cap X) \sqcup (S \cap Y,T \cap Y)) + h((S \cap X, T \cap X) \sqcap (S \cap Y,T \cap Y)) \\
&= h( S \cap (X \cup Y), T \cap (X \cup Y)) + h(S \cap X \cap Y, T \cap X \cap Y) \\
&= h_{(S,T)}(X \cup Y) + h_{(S,T)}(X \cap Y).
\end{align*}
If $S \cup T = N$, we call $(S,T)$ an \emph{orthant} of $\mathbb{R}^n$. We define the submodular and base polyhedron in the orthant $(S,T)$ as
\begin{align*}
P_{(S,T)}(h) &:= \lbrace x \in \mathbb{R}^n \ | \ x(X) - x(Y) \leq h(X,Y) \ \forall (X,Y) \in 2^{(S,T)} \rbrace; \\
B_{(S,T)}(h) & := \lbrace x \in P_{(S,T)}(h) \ | \ x(S) - x(T) = h(S,T) \rbrace.
\end{align*}
Note that, by definition of $h_{(S,T)}$, we have for any orthant $(S,T)$ of $\mathbb{R}^n$ that
\begin{equation*}
P_{(S,T)}(h) := \left\{ x \in \mathbb{R}^n \ | \ \exists y \in P(h_{(S,T)}) \text{ s.t. } x_i = \begin{cases} y_i & \text{if } i \in S; \\
-y_i & \text{if } i \in T. \end{cases} \right\} 
\end{equation*}
This implies for any $x \in \mathbb{R}^n$ that $x \in P_{(S,T)}(h)$ ($x \in B_{(S,T)}(h)$) if and only if $\tilde{x} \in P(h_{(S,T)})$ ($\tilde{x} \in B(h_{(S,T)})$) where $\tilde{x}_i = x_i$ if $i \in S$ and $\tilde{x}_i = -x_i$ if $i \in T$.    Note that $\tilde{B}(h)$ is the intersection of all sets $P_{(S,T)}(h)$ for all orthants $(S,T)$ of $\mathbb{R}^n$. Moreover, for any such orthant, the corresponding base polyhedron $B_{(S,T)}(h)$ is contained in $\tilde{B}(h)$ (see, e.g., \cite[Lemma 3.60]{Fujishige2005}).

Finally, analogously to base polyhedra and Lemma~\ref{lemma_extreme}, the extreme points of a bisubmodular polyhedron can be expressed analytically in terms of the underlying bisubmodular function:
\begin{lemma}[e.g., Theorem 3.4 in \cite{Ando1996}]
For each extreme point $z$ of a bisubmodular polyhedron    defined    by a bisubmodular function $h$, there exists a permutation $\pi$ of $N$ and a sign vector $s \in \lbrace -1,1 \rbrace^n$ such that $z_{\pi(1)} = s_{\pi(1)} h(\lbrace \pi(1) \rbrace \ | \ s)$ and $z_{\pi(k)} = s_{\pi(k)} (h(\lbrace \pi(1),\ldots,\pi(k) \rbrace \ | \ s) - h(\lbrace \pi(1),\ldots,\pi(k-1) \rbrace \ | \ s))$ for each $k \in \lbrace 2, \ldots, n\rbrace$.
\label{lemma_extreme_bi}
\end{lemma}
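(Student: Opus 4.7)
The plan is to adapt the greedy chain argument used for base polyhedra (Lemma~\ref{lemma_extreme}) to the bisubmodular setting, exploiting the lattice closure of tight bisets from Lemma~\ref{lemma_bisub}. Concretely, I would construct a saturated chain of $z$-tight bisets
$$(\emptyset, \emptyset) = U_0 \sqsubset U_1 \sqsubset \cdots \sqsubset U_n$$
with $|U_k^+ \cup U_k^-| = k$ for every $k$, then read off $\pi$ and $s$ by letting $\pi(k)$ be the unique element added in step $k$ and setting $s_{\pi(k)} = +1$ if $\pi(k) \in U_k^+$ and $s_{\pi(k)} = -1$ otherwise. With such a chain, $U_j = \lbrace \pi(1),\ldots,\pi(j) \rbrace \mid s$, and subtracting the $k$-th and $(k-1)$-th tight equations gives $s_{\pi(k)} z_{\pi(k)} = h(U_k^+, U_k^-) - h(U_{k-1}^+, U_{k-1}^-)$, which after multiplying both sides by $s_{\pi(k)} \in \lbrace \pm 1 \rbrace$ is exactly the claimed formula.

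\textbf{Constructing the chain.} The base case is $U_0 = (\emptyset, \emptyset)$, which is $z$-tight since $h(\emptyset,\emptyset) = 0$. For the inductive step, given $U_{k-1}$ $z$-tight with $k - 1 < n$, I would look at the family $\mathcal{F}_{k-1}$ of all tight bisets $(X,Y)$ with $U_{k-1} \sqsubseteq (X,Y)$. A short verification shows that $\mathcal{F}_{k-1}$ is closed under both $\sqcap$ and $\sqcup$: containment of $U_{k-1}$ in both operands forces $U_{k-1}^+ \cap (Y_1 \cup Y_2) = U_{k-1}^- \cap (X_1 \cup X_2) = \emptyset$, so $U_{k-1}$ survives the $\sqcup$, and Lemma~\ref{lemma_bisub} delivers tightness of both $\sqcup$ and $\sqcap$. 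Hence $\mathcal{F}_{k-1}$ has a $\sqsubseteq$-maximum, and if I can ensure this maximum strictly extends $U_{k-1}$, iterating produces the required chain.

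\textbf{Main obstacle.} The hard part is exactly this strict extension step: ruling out the possibility that $U_{k-1}$ is itself the maximum of $\mathcal{F}_{k-1}$ when $k - 1 < n$. The subtle case is when some index $i \notin U_{k-1}^+ \cup U_{k-1}^-$ appears on the positive side of one tight biset and on the negative side of another; then both $\sqcup$ and $\sqcap$ drop $i$ from the support, so the naive union argument fails. To handle this, I would argue by contradiction using extremeness of $z$: if no tight biset in $\mathcal{F}_{k-1}$ strictly extends $U_{k-1}$, one should be able to combine the structure of the dependence functions $\text{dep}(z, +i)$ and $\text{dep}(z, -i)$ from Section~\ref{sec_bisubmodular} with the tight bisets outside $\mathcal{F}_{k-1}$ to produce a nonzero direction $d$ along which both $z + \epsilon d$ and $z - \epsilon d$ remain in $\tilde{B}(h)$ for small $\epsilon > 0$, contradicting extremeness. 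I expect the technical heart of the proof to lie in this rank-counting and direction-finding step, which is more delicate than its analogue in Lemma~\ref{lemma_extreme} owing to the bipolar nature of $3^N$ and the loss of support under conflicting $\sqcup$ and $\sqcap$ operations.
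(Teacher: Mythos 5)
First, a point of reference: the paper does not prove this lemma at all --- it is imported verbatim as Theorem~3.4 of the cited Ando--Fujishige reference. So your proposal is not competing with an in-paper argument; it has to stand on its own, and as written it does not close.

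The telescoping half of your argument is fine: \emph{if} a chain $(\emptyset,\emptyset)=U_0\sqsubseteq U_1\sqsubseteq\cdots\sqsubseteq U_n$ of $z$-tight bisets with $|U_k^+\cup U_k^-|=k$ exists, then reading off $\pi$ and $s$ and subtracting consecutive tight equalities yields exactly the stated formula. But the entire content of the lemma is the existence of that chain, and that is precisely where the proposal stops. Two concrete problems. (i) Closure of $\mathcal{F}_{k-1}$ under $\sqcup$ and $\sqcap$ does \emph{not} give a $\sqsubseteq$-maximum, because $\sqcup$ is not a join for $\sqsubseteq$: already for $n=1$ with $h(\lbrace 1\rbrace,\emptyset)=h(\emptyset,\lbrace 1\rbrace)=0$ the tight family at the extreme point $z=0$ is $\lbrace(\emptyset,\emptyset),(\lbrace 1\rbrace,\emptyset),(\emptyset,\lbrace 1\rbrace)\rbrace$, which is closed under both operations yet has no $\sqsubseteq$-maximum (the lemma still holds there, via the chain ending at $(\lbrace 1\rbrace,\emptyset)$, which shows the ``maximum'' framing is the wrong invariant). (ii) Even granting that some tight biset strictly extends $U_{k-1}$, you need one whose support is larger by \emph{exactly one}; this does not follow from closure alone and requires a separation argument. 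Your ``main obstacle'' paragraph acknowledges the extension problem but resolves it with ``one should be able to combine \dots to produce a nonzero direction $d$'' --- that is a restatement of what must be proved, not a proof. The missing ingredient is the quantitative use of extremality: at an extreme point the characteristic vectors $\chi_X-\chi_Y$ of the bisets in $F(h,z)$ span $\mathbb{R}^n$, and one must show that a $\sqcup,\sqcap$-closed family with this spanning property admits a chain whose supports grow by one element per step (in the unsigned submodular case this is the standard ring-family argument behind Lemma~\ref{lemma_extreme}; in the signed case it is genuinely more delicate because $\sqcup$ can delete conflicting elements from the support).

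If you want a complete proof, the standard and cleaner route --- essentially the one in the cited reference, and the one the paper itself gestures at when it invokes the greedy algorithms of Dunstan--Welsh and Nakamura for linear optimization over $\tilde{B}(h)$ --- runs in the opposite direction: every extreme point $z$ is the unique maximizer over $\tilde{B}(h)$ of some linear functional $w^{\top}x$ with all $|w_i|$ distinct and nonzero; order $N$ by decreasing $|w_i|$ to obtain $\pi$, set $s_i:=\operatorname{sign}(w_i)$, show by bisubmodularity that the greedy vector defined by the formula in the lemma lies in $\tilde{B}(h)$ and attains the optimum of the linear program, and conclude by uniqueness that $z$ equals that greedy vector. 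Either carry that out or supply the chain-extension argument rigorously; as it stands the proposal has a genuine gap at its central step.
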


For more background on bisubmodular functions and polyhedra and on their discussed properties, we refer to \cite{Bouchet1995,Ando1996,Fujishige1997}.

\section{Majorization and $(a,b)$-majorization}
\label{sec_maj}

In this section, we discuss the classical concept of majorization (Section~\ref{sec_maj_basic}) and introduce a more general version of this concept that we call \emph{$(a,b)$-majorization} (Section~\ref{sec_maj_gen}).

\subsection{Classical majorization}
\label{sec_maj_basic}

Given a vector $x \in \mathbb{R}^n$, we denote the non-increasing ordering of the elements in $x$ by $x^{\downarrow}$ (so that $x^{\downarrow}_1 \geq \ldots \geq x^{\downarrow}_n$) and the non-decreasing order by $x^{\uparrow}$ (so that $x^{\uparrow}_1 \leq \ldots \leq x^{\uparrow}_n$). Given also another vector $y \in \mathbb{R}^n$, we say that $x$ is 
\begin{itemize}
\item
weakly submajorized by $y$, denoted by $x \prec^{\downarrow} y$, if $\sum_{i=1}^k x^{\downarrow}_i \leq \sum_{i =1}^k y^{\downarrow}_i$ for all $k \in N$;
\item
weakly supermajorized by $y$, denoted by $x \prec^{\uparrow} y$, if $\sum_{i=1}^k x^{\uparrow}_i \geq \sum_{i =1}^k y^{\uparrow}_i$ for all $k \in N$;
\item majorized by $y$, denoted by $x \prec y$, if $x$ is both weakly sub- and supermajorized by $y$ or, equivalently, $x \prec^{\downarrow} y$ and $x(N) = y(N)$.
\end{itemize}

Several characterizations of these majorization concepts exist. One characterization that is particularly useful in the context of optimization problems is in terms of objective values for Schur-convex functions. A function $\Phi{:} \ \mathbb{R}^n \rightarrow \mathbb{R}$ is said to be Schur-convex if it preserves the order of majorization, i.e., if $\Phi(x) \leq \Phi(y)$ whenever $x \prec y$. The following well-known characterizations of majorization and Schur-convex functions will be useful in the remainder of this article:
\begin{lemma}
Given vectors $x,y \in \mathbb{R}^n$, the following statements are equivalent:
\begin{enumerate}
\item
$x \prec^{\downarrow} y$;
\item
$\Phi(x) \leq \Phi(y)$ for all non-decreasing continuous Schur-convex functions $\Phi$;
\item
$\sum_{i \in N} \phi(x_i) \leq \sum_{i \in N} \phi(y_i)$ for all non-decreasing continuous convex functions $\phi$;
\item
$\sum_{i \in N} \max(0, \alpha + x_i) \leq \sum_{i \in N} \max(0, \alpha + y_i)$ for all $\alpha \in \mathbb{R}$. 
\end{enumerate}
\label{lemma_maj_sub}
\end{lemma}
\begin{lemma}
Given vectors $x,y \in \mathbb{R}^n$, the following statements are equivalent:
\begin{enumerate}
\item
$x \prec^{\uparrow} y$;
\item
$\Phi(x) \leq \Phi(y)$ for all non-increasing continuous Schur-convex functions $\Phi$;
\item
$\sum_{i \in N} \phi(x_i) \leq \sum_{i \in N} \phi(y_i)$ for all non-increasing continuous convex functions $\phi$;
\item
$\sum_{i \in N} \max(0, \alpha - x_i) \leq \sum_{i \in N} \max(0, \alpha - y_i)$ for all $\alpha \in \mathbb{R}$. 
\end{enumerate}
\label{lemma_maj_sup}
\end{lemma}
\begin{lemma}
Given vectors $x,y \in \mathbb{R}^n$, the following statements are equivalent:
\begin{enumerate}
\item
$x \prec y$;
\item
$\Phi(x) \leq \Phi(y)$ for all continuous Schur-convex functions $\Phi$;
\item
$\sum_{i \in N} \phi(x_i) \leq \sum_{i \in N} \phi(y_i)$ for all continuous convex functions $\phi$;
\item
$x(N) = y(N)$ and $\sum_{i \in N} \max(0, \alpha - x_i) \leq \sum_{i \in N} \max(0, \alpha - y_i)$ for all $\alpha \in \mathbb{R}$. 
\end{enumerate}
\label{lemma_maj}
\end{lemma}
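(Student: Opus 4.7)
The plan is to prove the equivalences via the cycle $(1) \Rightarrow (2) \Rightarrow (3) \Rightarrow (4) \Rightarrow (1)$, using Lemma~\ref{lemma_maj_sub} and Lemma~\ref{lemma_maj_sup} as black boxes. The key structural observation I would exploit up front is that $x \prec y$ is equivalent to $x \prec^{\uparrow} y$ together with $x(N) = y(N)$. Indeed, for every $k$ one has $\sum_{i=1}^k x^{\downarrow}_i = x(N) - \sum_{i=1}^{n-k} x^{\uparrow}_i$, so weak supermajorization plus equality of total sums automatically implies weak submajorization. This reduces the problem from juggling both one-sided majorizations to handling only one of them.

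For the routine direction (1)$\Rightarrow$(2), the inequality holds by the very definition of a Schur-convex function. For (2)$\Rightarrow$(3), I would invoke the standard fact (Proposition~1.C.3 of \cite{Marshall2011}) that any separable sum $\Phi(x) = \sum_{i \in N} \phi(x_i)$ with $\phi$ continuous and convex is symmetric and convex, and hence Schur-convex, so (3) is a special case of (2). For (3)$\Rightarrow$(4), I would apply (3) to the convex functions $\phi(t) = t$ and $\phi(t) = -t$ to deduce the equality $x(N) = y(N)$, and then to $\phi(t) = \max(0, \alpha - t)$ for each fixed $\alpha \in \mathbb{R}$ to get the remaining inequality.

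The main step is (4)$\Rightarrow$(1). Here I would first observe that the second half of (4) is exactly condition (4) of Lemma~\ref{lemma_maj_sup}, so that lemma immediately yields $x \prec^{\uparrow} y$. Combining this with the equality $x(N) = y(N)$ from the first half of (4) and applying the structural observation in the opening paragraph gives $x \prec^{\downarrow} y$ as well, hence $x \prec y$, closing the cycle.

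Each individual implication is essentially a reassembly of the earlier two lemmas, so I do not expect a serious obstacle. The only subtlety worth flagging is that characterization (4) uses only one family of cut functions $\max(0, \alpha - x_i)$, even though majorization combines \emph{both} weak sub- and weak supermajorization; it is precisely the equal-sum condition $x(N) = y(N)$ that restores the symmetry and lets one family suffice. If one wanted a proof that did not lean on Lemma~\ref{lemma_maj_sub} or Lemma~\ref{lemma_maj_sup}, the real difficulty would be (4)$\Rightarrow$(3), which would require approximating an arbitrary continuous convex $\phi$ by nonnegative combinations of affine and cut functions; since those lemmas are already available, this difficulty is absorbed into them and does not need to be faced directly here.
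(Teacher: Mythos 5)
Your proof is correct. Note, though, that the paper does not actually prove Lemma~\ref{lemma_maj} (nor Lemmas~\ref{lemma_maj_sub} and~\ref{lemma_maj_sup}): it simply refers the reader to Propositions 1.A.2, 1.A.8, and 4.B.1--4.B.4 of \cite{Marshall2011}. So there is no in-paper argument to compare against, and what you have written is a legitimate self-contained derivation modulo treating Lemma~\ref{lemma_maj_sup} as a black box. Your key structural observation --- that $\sum_{i=1}^{k} x^{\downarrow}_i = x(N) - \sum_{i=1}^{n-k} x^{\uparrow}_i$, so that weak supermajorization together with $x(N) = y(N)$ already forces weak submajorization --- is exactly the standard mechanism by which the equal-sum condition lets the single family of cut functions $\max(0,\alpha - t)$ characterize full majorization, and it correctly closes the cycle $(4) \Rightarrow (1)$. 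The implications $(1) \Rightarrow (2) \Rightarrow (3) \Rightarrow (4)$ are handled as one would expect (Schur-convexity by definition, separable convex sums being symmetric convex hence Schur-convex, and specializing to $\pm t$ and the cut functions). The only cosmetic point is that condition (3) as printed contains a typo inherited from the paper (the right-hand side should read $\sum_{i \in N}\phi(y_i)$); your reading of it is the intended one.
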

We refer to Propositions 1.A.2, 1.A.8, and 4.B.1-4.B.4 in \cite{Marshall2011} for the proofs of these results and to \cite{Marshall2011} in general for more background on majorization.

Alongside these well-known majorization concepts, we also consider the concept of weak \emph{absolute} majorization. Given a vector $x \in \mathbb{R}^n$, we denote the non-increasing ordering of \emph{the absolute values of} the elements of $x$ by $x^{\text{abs}}$. We say that $x$ is \emph{weakly absolutely} majorized by $y$, denoted by $x \prec^{\text{abs}} y$, if $\sum_{i=1}^k |x^{\text{abs}}_i| \leq \sum_{i =1}^k |y^{\text{abs}}_i|$ for all $k \in N$ (see also \cite{Eaton1984,Zhan2005,Niezgoda2012}). Although this majorization concept is less known than the three other concepts discussed above, it occurs regularly as an illustrative example when studying majorization from the perspective of ordered groups (see, e.g., \cite{Francis2014}).

Lemma~\ref{lemma_maj_abs} provides a characterization of weak absolute majorization in terms of a particular class of Schur-convex functions that we call \emph{monotonically even}. This class consists of all Schur-convex functions $\Phi$ for which 
\begin{itemize}
\item
$\Phi(x_1,\ldots,x_{i-1},-x_{i},x_{i+1},\ldots,x_n) = \Phi(x_1,\ldots,x_{i-1},x_{i},x_{i+1},\ldots,x_n)$ for all $x \in \mathbb{R}^n$ and $i \in N$ ($\Phi$ is even);
\item
$\Phi(x_1,\ldots,x_{i-1},x_{i},x_{i+1},\ldots,x_n) \leq \Phi(x_1,\ldots,x_{i-1},x_{i} - \epsilon,x_{i+1},\ldots,x_n)$ for all $\epsilon > 0$ and $x \in \mathbb{R}^n$ with $x_i \leq 0$ ($\Phi$ is element-wise non-increasing for negative inputs);
\item
$\Phi(x_1,\ldots,x_{i-1},x_{i},x_{i+1},\ldots,x_n) \leq \Phi(x_1,\ldots,x_{i-1},x_{i} + \epsilon,x_{i+1},\ldots,x_n)$ for all $\epsilon > 0$ and $x \in \mathbb{R}^n$ with $x_i \geq 0$ ($\Phi$ is element-wise non-decreasing for positive inputs).
\end{itemize}
This    definition    directly leads to a characterization in terms of separable \emph{even} convex objective functions, i.e., functions of the form $\sum_{i \in N} \phi(x_i)$ where each $\phi$ is convex and $\phi(-\zeta) = \phi(\zeta)$ for all $\zeta \in \mathbb{R}$ (note that evenness and convexity of $\phi$ directly ensures that $\phi$ is also monotonically even). This characterization and its proof are analogous to those corresponding to Lemmas~\ref{lemma_maj_sub}-\ref{lemma_maj}:
\begin{lemma}
Given vectors $x,y \in \mathbb{R}^n$, the following statements are equivalent:
\begin{enumerate}
\item
$x \prec^{\text{abs}} y$;
\item
$\Phi(x) \leq \Phi(y)$ for all monotonically even continuous Schur-convex functions $\Phi$;
\item
$\sum_{i \in N} \phi(x_i) \leq \sum_{i \in N} \phi(y_i)$ for all even continuous convex functions $\phi$;
\item
$\sum_{i \in N} \max(\alpha - x_i,0, \alpha + x_i) \leq \sum_{i \in N} \max(\alpha - y_i,0, \alpha + y_i)$ for all $\alpha \in \mathbb{R}$. 
\end{enumerate}
\label{lemma_maj_abs}
\end{lemma}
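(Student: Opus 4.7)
The plan is to establish the cyclic chain $(1) \Rightarrow (2) \Rightarrow (3) \Rightarrow (4) \Rightarrow (1)$. Three of these arrows are essentially tautological reductions to other known results, so the analytic content is concentrated entirely in the step $(1) \Rightarrow (2)$, which I treat separately.

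For $(2) \Rightarrow (3)$, I would observe that any even continuous convex $\phi$ is automatically non-increasing on $(-\infty,0]$ and non-decreasing on $[0,\infty)$ (convexity together with $\phi(t)=\phi(-t)$ forces the minimum at $0$), so the separable sum $\Phi(z) = \sum_{i \in N}\phi(z_i)$ is continuous, symmetric, Schur-convex (sums of the same convex function applied coordinate-wise are classically Schur-convex), and monotonically even; applying (2) to this $\Phi$ yields (3). For $(3) \Rightarrow (4)$, I would use the identity $\max(\alpha - t, 0, \alpha + t) = \max(\alpha + |t|, 0)$, which shows that for each $\alpha \in \mathbb{R}$ the function $\phi_\alpha(t) := \max(\alpha + |t|, 0)$ is both even (immediate) and convex (a pointwise maximum of affine functions), so (3) applied to $\phi_\alpha$ gives (4). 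For $(4) \Rightarrow (1)$, the same identity makes condition (4) coincide exactly with condition (4) of Lemma~\ref{lemma_maj_sub} applied to $|x|$ and $|y|$, which is in turn equivalent there to $|x| \prec^{\downarrow} |y|$, that is, to (1).

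For the remaining implication $(1) \Rightarrow (2)$, let $\Phi$ be continuous, monotonically even, and Schur-convex, and suppose $x \prec^{\text{abs}} y$. Iterating the coordinate-wise evenness yields $\Phi(x) = \Phi(|x|)$ and $\Phi(y) = \Phi(|y|)$, so it suffices to show $\Phi(|x|) \leq \Phi(|y|)$. Since $|x| \prec^{\downarrow} |y|$, I would invoke the standard decomposition of weak submajorization (Proposition~4.B.1 of \cite{Marshall2011}) to produce $w \in \mathbb{R}^n$ with $|x| \leq w$ and $w \prec |y|$. By the Hardy-Littlewood-Polya theorem, $w$ can additionally be taken in the convex hull of the permutations of $|y|$, which forces $w \geq 0$ because $|y| \geq 0$. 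Schur-convexity applied to $w \prec |y|$ gives $\Phi(w) \leq \Phi(|y|)$, while the element-wise non-decreasing property of $\Phi$ on the non-negative orthant, applied coordinate by coordinate along the path from $|x|$ to $w$ (which never leaves $\mathbb{R}^n_{\geq 0}$ because $0 \leq |x| \leq w$), gives $\Phi(|x|) \leq \Phi(w)$. Chaining the two inequalities yields $\Phi(x) = \Phi(|x|) \leq \Phi(w) \leq \Phi(|y|) = \Phi(y)$, as required.

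The main obstacle is the step $(1) \Rightarrow (2)$: a monotonically even Schur-convex $\Phi$ is neither globally monotone nor separable, so Lemma~\ref{lemma_maj_sub} cannot be invoked directly on $\Phi$ itself. The crucial point is that on the non-negative orthant $\Phi$ is \emph{simultaneously} Schur-convex and coordinate-wise non-decreasing, which dovetails precisely with the two-step factorization of weak submajorization into a coordinate-wise increase followed by full majorization and allows the non-separable part of the argument to be handled without leaving the regime where $\Phi$ behaves well.
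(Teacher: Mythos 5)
Your proof is correct and follows the same cyclic architecture $(1)\Rightarrow(2)\Rightarrow(3)\Rightarrow(4)\Rightarrow(1)$ as the paper, with the steps $(2)\Rightarrow(3)$ and $(3)\Rightarrow(4)$ essentially identical. The two places where you diverge are both worth noting. For $(4)\Rightarrow(1)$, the paper constructs the specific test functions $\tilde{\phi}^k(z) := \max(-z - |y^{\text{abs}}_k|, 0, z - |y^{\text{abs}}_k|)$ and bounds the partial sums of $|x^{\text{abs}}|$ by a direct computation; your observation that $\max(\alpha - t, 0, \alpha + t) = \max(0, \alpha + |t|)$ lets you outsource the whole implication to condition (4) of Lemma~\ref{lemma_maj_sub} applied to $(|x_i|)_i$ and $(|y_i|)_i$, which is shorter and makes the structural point (weak absolute majorization of $x,y$ \emph{is} weak submajorization of their absolute values) explicit. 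For $(1)\Rightarrow(2)$, the paper's argument is terse: it asserts that because $\tilde{x} := (|x_i|)_i$ and $\tilde{y} := (|y_i|)_i$ are non-negative and $\tilde{x} \prec^{\downarrow} \tilde{y}$, the inequality $\Phi(\tilde{x}) \leq \Phi(\tilde{y})$ holds for monotonically even Schur-convex $\Phi$, without explaining how to get around the fact that such a $\Phi$ is not globally non-decreasing (so Lemma~\ref{lemma_maj_sub}(2) does not apply to it directly). Your decomposition $|x| \leq w \prec |y|$ with $w$ in the convex hull of permutations of $|y|$ (hence $w \geq 0$), splitting the estimate into a coordinate-wise increase inside the non-negative orthant followed by a pure Schur-convexity step, supplies exactly the justification the paper leaves implicit. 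The only blemish is the citation: the decomposition of weak submajorization into a coordinate-wise dominance followed by full majorization is not Proposition 4.B.1 of Marshall--Olkin (that is the convex-function characterization); it is the result usually numbered 5.A.9 there. The mathematics is unaffected.
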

\begin{proof}
  
Proof of (1) $\Rightarrow$ (2): Given a monotonically even Schur-convex function $\Phi$, consider the non-decreasing Schur-convex function
\begin{equation*}
\tilde{\Phi}(x) = \begin{cases} \Phi(0) & \text{if } x \not\geq 0, \\
\Phi(x) & \text{if } x \geq 0. \end{cases}
\end{equation*}
 Let $\tilde{x} := (|x_i|)_{i \in N}$ and $\tilde{y} := (|y_i|)_{i \in N}$. If $x \prec^{\text{abs}} y$, then $\tilde{x} \prec^{\downarrow} \tilde{y}$ and thus $\tilde{\Phi}(\tilde{x}) \leq \tilde{\Phi}(\tilde{y})$. Since $\tilde{x}$ and $\tilde{y}$ are non-negative, it follows that $\Phi(\tilde{x}) \leq \Phi(\tilde{y})$ and, because $\Phi$ is even, that $\Phi(x) \leq \Phi(y)$.

Proof of (2) $\Rightarrow$ (3): Given a continuous convex function $\phi$, the function $\sum_{i \in N} \phi(x_i)$ is Schur-convex. If $\phi$ is also even, then $\sum_{i \in N} \phi(x_i)$ is Schur-convex and monotonically even and thus (3) is a special case of (2).

Proof of (3) $\Rightarrow$ (4): Note that for any $\alpha \in \mathbb{R}$ the function $ \max(\alpha - z,0,\alpha + z)$ is even and convex. Thus, (4) is a special case of (3).

Proof of (4) $\Rightarrow$ (1): We define for a given $k \in N$ the even convex function $\tilde{\phi}^k (z) := \max(-z - |y^{\text{abs}}_k|,0,z - |y^{\text{abs}}_k|)$. Then we have $\sum_{i=1}^k \tilde{\phi}^k(y^{\text{abs}}_i) = \sum_{i=1}^k |y^{\text{abs}}_i| - k|y^{\text{abs}}_k|$ and $\sum_{i=k+1}^n \tilde{\phi}^k(y^{\text{abs}}_i) = 0$. Note that, by assumption, we have $\sum_{i \in N} \tilde{\phi}^k(x_i) \leq \sum_{i \in N} \tilde{\phi}^k(y_i)$. Moreover, for any $z \in \mathbb{R}$, we have $\tilde{\phi}^k (z) \geq 0$ and $\tilde{\phi}^k (z) \geq |z| - |y^{\text{abs}}_k|$. It follows that
\begin{align*}
\sum_{i=1}^k |y^{\text{abs}}_i| - k|y^{\text{abs}}_k|
&=
\sum_{i \in N} \tilde{\phi}^k (y_i)
\geq \sum_{i \in N} \tilde{\phi}^k (x_i) \\
&\geq \sum_{i =1}^k \tilde{\phi}^k (x^{\text{abs}}_i)
\geq \sum_{i=1}^k |x^{\text{abs}}_i| - k|y^{\text{abs}}_k|,
\end{align*}
which implies that $\sum_{i=1}^k |x^{\text{abs}}_i| \leq \sum_{i=1}^k |y^{\text{abs}}_i|$. Since this holds for all $k \in N$, it follows that $x \prec^{\text{abs}} y$.
\end{proof}

\subsection{$(a,b)$-majorization}
\label{sec_maj_gen}

We introduce below a more general version of all four majorization concepts discussed in Section~\ref{sec_maj_basic}. This generalization, which we call \emph{$(a,b)$-majorization}, can be interpreted as majorization relative to scaling by a (positive) vector $a$ and shifting by a vector $b$. More precisely, for fixed scaling and shifting vectors $a \in \mathbb{R}^n_{>0}$ and $b \in \mathbb{R}^n$ and given two vectors $x,y \in \mathbb{R}^n$, we say that $x$ is 
\begin{itemize}
\item
\sloppy $(a,b)$-majorized by $y$, denoted by $x \prec_{(a,b)} y$, if $x(N) = y(N)$ and for all continuous convex functions $\phi$ we have $\sum_{i \in N} a_i \phi (\frac{x_i + b_i}{a_i}  ) \leq \sum_{i \in N} a_i \phi (\frac{y_i + b_i}{a_i}  )$;
\item
weakly $(a,b)$-submajorized by $y$, denoted by $x \prec_{(a,b)}^{\downarrow} y$, if for all non-decreasing continuous convex functions $\phi$ we have $\sum_{i \in N} a_i \phi (\frac{x_i + b_i}{a_i}  ) \leq \sum_{i \in N} a_i \phi (\frac{y_i + b_i}{a_i}  )$;
\item
weakly $(a,b)$-supermajorized by $y$, denoted by $x \prec_{(a,b)}^{\uparrow} y$, if for all non-increasing continuous convex functions $\phi$ we have $\sum_{i \in N} a_i \phi (\frac{x_i + b_i}{a_i}  ) \leq \sum_{i \in N} a_i \phi (\frac{y_i + b_i}{a_i}  )$;
\item
weakly absolutely $(a,b)$-majorized by $y$, denoted by $x \prec_{(a,b)}^{\text{abs}} y$, if for all even continuous convex functions $\phi$ we have $\sum_{i \in N} a_i \phi (\frac{x_i+ b_i}{a_i}  ) \leq \sum_{i \in N} a_i \phi (\frac{y_i + b_i}{a_i} )$.
\end{itemize}
Moreover, given a set $C \subseteq \mathbb{R}^n$, a vector $x \in C$ is a 
\begin{itemize}
\item
\emph{least $(a,b)$-majorized} element of $C$ if $x \prec_{(a,b)} y$ for all $y \in C$;
\item
\emph{least weakly $(a,b)$-submajorized} element of $C$ if $x \prec_{(a,b)}^{\downarrow} y$ for all $y \in C$;
\item
\emph{least weakly $(a,b)$-supermajorized} element of $C$ if $x \prec_{(a,b)}^{\uparrow} y$ for all $y \in C$;
\item
\emph{least weakly absolutely $(a,b)$-majorized} element of $C$ if $x \prec_{(a,b)}^{\text{abs}} y$ for all $y \in C$.
\end{itemize}
For convenience, if the specific type of majorized element is clear from the context or not relevant, we call all these four types of elements least $(a,b)$-majorized elements. If also the vectors $a$ and $b$ are not specified, we call them least majorized elements.

The concept of $(a,b)$-majorization is a slight generalization of the earlier mentioned concept of $d$-majorization \cite{Veinott1971}, i.e., $d$-majorization is equivalent to $(d,0)$-majorization. Although the inclusion of a shift vector $b$ may not seem as a significant generalization, it is a crucial factor when proving our main characterization results in Section~\ref{sec_from_maj_to_sub} with regard to submodular, supermodular, base, and bisubmodular polyhedra.

Note that, as opposed to the ``classical" majorization concepts of Section~\ref{sec_maj_basic}, a definition of $(a,b)$-majorization cannot be given in terms of partial sums of non-increasing, non-decreasing, or absolute orders. Similar generalizations of majorization such as $d$-majorization \cite{Veinott1971} are introduced via an alternative definition of majorization involving doubly stochastic matrices (see also Chapter~2 and Section~14.B of \cite{Marshall2011}) or in terms of optimality for particular classes of separable convex objective functions (see, e.g., \cite{Joe1990}). Since we focus in this article on optimization problems, we follow the latter approach to define and interpret $(a,b)$-majorization. 

\sloppy Despite this discrepancy between ``classical" majorization and $(a,b)$-majorization, Lemmas~\ref{lemma_maj_sub}-\ref{lemma_maj_abs} can be partially generalized to the case of $(a,b)$-majorization. More precisely, to prove that a vector $x \in \mathbb{R}^n$ is $(a,b)$-majorized by another vector $y \in \mathbb{R}^n$ in one of the four senses, it suffices to check the majorization property for only a specific class of max-functions:
\begin{lemma}
Given $x,y \in \mathbb{R}^n$, the following hold:
\begin{enumerate}
\item
\sloppy $x \prec^{\downarrow}_{(a,b)} y$ if and only if $\sum_{i \in N} a_i \max (0, \alpha + \frac{x_i + b_i}{a_i}  ) \leq \sum_{i \in N} a_i \max (0, \alpha + \frac{y_i + b_i}{a_i}  )$ for all $\alpha \in \mathbb{R}$;
\item
$x \prec^{\uparrow}_{(a,b)} y$ if and only if $\sum_{i \in N} a_i \max (0, \alpha - \frac{x_i + b_i}{a_i}  ) \leq \sum_{i \in N} a_i \max (0, \alpha - \frac{y_i + b_i}{a_i}  )$ for all $\alpha \in \mathbb{R}$;
\item
$x \prec_{(a,b)} y$ if and only if $x \prec^{\downarrow}_{(a,b)} y$ and $x \prec^{\uparrow}_{(a,b)} y$;
\item
\sloppy $x \prec^{\text{abs}}_{(a,b)} y$ if and only if $\sum_{i \in N} a_i \max (\alpha - \frac{x_i + b_i}{a_i}  , 0 , \alpha + \frac{x_i + b_i}{a_i}  ) \leq \sum_{i \in N} a_i \max (\alpha - \frac{y_i + b_i}{a_i}  , 0 , \alpha + \frac{y_i + b_i}{a_i}  )$ for all $\alpha \in \mathbb{R}$.
\end{enumerate}
\label{lemma_leastab}
\end{lemma}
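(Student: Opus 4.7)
The plan is to dispense with the forward directions by inspection and focus on the reverse directions via an integral representation of the relevant convex function classes in terms of the specified max-functions. For the forward directions: the function $\zeta \mapsto \max(0, \alpha + \zeta)$ is non-decreasing continuous convex, $\zeta \mapsto \max(0, \alpha - \zeta)$ is non-increasing continuous convex, and $\zeta \mapsto \max(\alpha - \zeta, 0, \alpha + \zeta)$ is even continuous convex, so (1)$\Rightarrow$, (2)$\Rightarrow$, (4)$\Rightarrow$ follow from their definitions. Part (3)$\Rightarrow$ is immediate since non-decreasing and non-increasing convex functions are convex.

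For (1)$\Leftarrow$, I would fix a bounded interval $[L,U]$ containing all values $\tilde{x}_i := x_i/a_i + b_i$ and $\tilde{y}_i := y_i/a_i + b_i$, and use the standard representation
\begin{equation*}
\phi(\zeta) = \phi(L) + \phi'_+(L)(\zeta - L) + \int_L^U \max(0, \zeta - \alpha) \, d\mu(\alpha), \quad \zeta \in [L,U],
\end{equation*}
valid for any non-decreasing continuous convex $\phi$, with $\phi'_+(L) \geq 0$ and $\mu$ a non-negative (second-derivative) measure. Summing the relation with weights $a_i$ at $\tilde{x}_i$ versus $\tilde{y}_i$, the constant term cancels, the integral term has non-negative $y$-minus-$x$ difference by Fubini combined with the assumed max-function inequality (applied with $\alpha$ replaced by $-\alpha$), and the linear term is non-negative because $\phi'_+(L) \geq 0$ and $x(N) \leq y(N)$. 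The inequality $x(N) \leq y(N)$ is extracted from the assumption by taking $\alpha$ large enough that $\alpha + \tilde{x}_i, \alpha + \tilde{y}_i \geq 0$ for all $i$, so the maxima become linear and the hypothesis reduces to $\sum_i a_i \tilde{x}_i \leq \sum_i a_i \tilde{y}_i$. Part (2)$\Leftarrow$ is entirely analogous, using the representation of non-increasing continuous convex $\phi$ on $[L,U]$ in terms of the functions $\max(0, \alpha - \zeta)$.

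For (3)$\Leftarrow$ I would decompose an arbitrary continuous convex $\phi$ on $\mathbb{R}$ as $\phi = \phi_1 + \phi_2 + A$, where $\phi_1$ is non-decreasing convex, $\phi_2$ is non-increasing convex, and $A$ is affine (take any $\zeta_0$ and $s \in \partial \phi(\zeta_0)$, and split the convex ``residual'' $\phi(\zeta) - \phi(\zeta_0) - s(\zeta - \zeta_0)$ into its restrictions to $\zeta \geq \zeta_0$ and $\zeta \leq \zeta_0$). Parts (1) and (2) handle $\phi_1,\phi_2$, while for the affine part I use that applying $\prec^{\downarrow}_{(a,b)}$ to $\phi(\zeta) = \zeta$ and $\prec^{\uparrow}_{(a,b)}$ to $\phi(\zeta) = -\zeta$ forces $x(N) = y(N)$, so $A$ contributes identically on both sides. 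For (4)$\Leftarrow$, I would write any even continuous convex $\phi$ as $\phi(\zeta) = \psi(|\zeta|)$, where $\psi$ is non-decreasing continuous convex on $[0,\infty)$, and use the representation $\psi(u) = \psi(0) + \psi'_+(0) u + \int_0^U \max(0, u - \alpha)\,d\mu(\alpha)$. Substituting $u = |\zeta|$ and observing that $|\zeta| = \max(-\zeta, 0, \zeta)$ and $\max(0, |\zeta| - \alpha) = \max(-\alpha - \zeta, 0, -\alpha + \zeta)$ for $\alpha \geq 0$, we obtain $\phi$ as a non-negative combination of the functions appearing in (4) (with parameters $\leq 0$) plus a constant, so the assumed inequality for each such max-function yields the conclusion. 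The main subtlety is choosing $[L,U]$ large enough that the integral representations are valid simultaneously at all $\tilde{x}_i,\tilde{y}_i$ and verifying that every affine or constant contribution introduced by the representations cancels exactly between the two sides.
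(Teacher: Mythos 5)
Your proof is correct and follows essentially the same strategy as the paper: both reduce the reverse implications to the fact that, on the bounded range spanned by the points $\frac{x_i}{a_i}+b_i$ and $\frac{y_i}{a_i}+b_i$, each relevant class of convex functions is a non-negative combination of the elementary max-functions plus constant/affine terms, with the affine contribution controlled by extracting $x(N)\leq y(N)$ (or $\geq$, or $=$) from the hypothesis at large $\alpha$. The only differences are technical: you use an exact integral (second-derivative measure) representation and, for part (3), a splitting into monotone pieces plus an affine term, whereas the paper uses a finite piecewise-linear interpolation at the $2n$ evaluation points and a direct two-sided hinge decomposition; both routes are valid.
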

\begin{proof}
Proof of (1): The ``only if"-part follows directly from the definition of weak $(a,b)$-submajorization since $\max(0,\zeta)$ is non-decreasing    and convex    in $\zeta$. Thus, we proceed to prove the ``if"-part. Given a non-decreasing continuous convex function $\phi$, consider the piecewise linear approximation $\hat{\phi}$ of $\phi$ whose breakpoints are in the set $A(x,y) := \lbrace \frac{x_1 + b_1}{a_1} , \ldots, \frac{x_n + b_n}{a_n} , \frac{y_1 + b_1}{a_1} ,\ldots, \frac{y_n + b_n}{a_n}  \rbrace$    and that agrees with $\phi$ on these breakpoints, i.e., $\hat{\phi}(z) = \phi(z)$ for all $z \in A(x,y)$.    Moreover, let $\delta' := \min (A(x,y))$. Observe that for any $\zeta \in \mathbb{R}$, $\hat{\phi}(\zeta)$ can be written as $\phi(\delta') + \sum_{\delta \in A(x,y)} \beta_{\delta} \max(0,\alpha_{\delta} + \zeta)$ for some values $\alpha_{\delta} \in \mathbb{R}$    and $\beta_{\delta} \in \mathbb{R}_{> 0}$.    Thus, we have
\begin{align*}
\sum_{i \in N} a_i \phi \left( \frac{x_i + b_i}{a_i}  \right)
&= \sum_{i \in N} a_i \left( \phi(\delta') + \sum_{\delta \in A(x,y)}  \beta_{\delta}  \max \left(0,\alpha_{\delta} + \frac{x_i + b_i}{a_i}  \right) \right) \\
& \leq
 \sum_{i \in N} a_i \left( \phi(\delta') + \sum_{\delta \in A(x,y)}  \beta_{\delta}  \max \left(0,\alpha_{\delta} + \frac{y_i + b_i}{a_i}  \right) \right) \\
&= \sum_{i \in N} a_i \phi \left( \frac{y_i + b_i}{a_i}  \right).
\end{align*}
It follows that $x \prec^{\downarrow}_{(a,b)} y$.

Proof of (2): Analogous to the proof of (1).

Proof of (3): Analogous to the proof of (1), where we instead use the observation that the piecewise linear approximation of any continuous convex function $\phi$ with breakpoints in set $A(x,y)$ can be written as $\phi(\delta') + \sum_{\delta \in A; \delta > \delta'}  \beta_{\delta}  \max(0,\alpha_{\delta} - z) + \sum_{\delta \in A; \delta < \delta'}   \beta_{\delta}  \max(0,\alpha_{\delta} + z)$ for some values $\alpha_{\delta} \in \mathbb{R}$ and $\beta_{\delta} \in \mathbb{R}_{> 0}$,   
 where $\delta' := \min(A(x,y))$. Furthermore, letting $\alpha' := \max_{\delta \in A(x,y)} |\delta |$, the fact that $x \prec^{\downarrow}_{(a,b)} y$ implies that
\begin{align*}
a(N) \alpha' + x(N) +  b(N) &= \sum_{i \in N} a_i  \max \left( 0, \alpha' + \frac{x_i + b_i}{a_i}  \right) \\
& \leq \sum_{i \in N} a_i  \max \left( 0, \alpha' + \frac{y_i + b_i}{a_i} \right) \\
&= a(N) \alpha' + y(N) + b(N).
\end{align*}
It follows that $x(N) \leq y(N)$. Analogously, $x \prec^{\uparrow}_{(a,b)} y$ implies that $x(N) \geq y(N)$ and thus we have $x(N) = y(N)$.

Proof of (4): Analogous to the proof of (1), where we instead use the observation that the piecewise linear approximation of any even convex function $\phi$ with breakpoints in the set $A(x,y)$ can be written as $\phi(\delta') + \sum_{ \delta \in A(x,y)} \beta_{\delta} \max ( \alpha_{\delta} - z, 0 , \alpha_{\delta} + z)$ for some values $\alpha_{\delta} \in \mathbb{R}$ and    $\beta_{\delta} \in \mathbb{R}_{> 0}$,    where $\delta' := \min(A(x,y))$.
\end{proof}

   Finally, we    note that for any $b \in \mathbb{R}^n$ a least $(1,b)$-majorized element of some set $C \subseteq \mathbb{R}^n$ also minimizes $\Phi(x + b)$ over $C$ for any choice of Schur-convex function $\Phi$. Analogously, a least weakly $(1,b)$-sub- or $(1,b)$-supermajorized element or least weakly absolutely $(1,b)$-majorized element of $C$ minimizes $\Phi(x + b)$ for any choice of non-decreasing, non-increasing, or monotonically even Schur-convex function $\Phi$, respectively.

\section{Identifying sets with least $(a,b)$-majorized elements}
\label{sec_existence}

In this section, we establish the existence of least $(a,b)$-majorized, least weakly $(a,b)$-sub- and $(a,b)$-supermajorized, and least weakly absolutely $(a,b)$-majorized elements for several sets that are in some way related to (bi)submodular functions and the related polyhedra. The starting point for our investigations is a recently proved result in \cite{SchootUiterkamp2021} that establishes the existence of least $(a,b)$-majorized elements for base polyhedra and of least $(1,b)$-majorized elements for integral base polyhedra:
\begin{lemma}[Condition~1 and Theorem~1 in \cite{SchootUiterkamp2021}]
Let $f$ be a submodular function. For each pair of vectors $a \in \mathbb{R}^n_{>0}$ and $b \in \mathbb{R}^n$, the base polyhedron $B(f)$ has a unique least $(a,b)$-majorized element. Moreover, if $f$ and $b$ are integral, then any optimal solution to the problem $\min_{x \in B^{\mathbb{Z}}(f)} \sum_{i \in N} \frac{1}{2}(x_i + b_i)^2 $ is a least $(1,b)$-majorized element of the integral base polyhedron $B^{\mathbb{Z}}(f)$.
\label{lemma_reduction}
\end{lemma}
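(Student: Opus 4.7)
My plan is to construct the candidate least $(a,b)$-majorized element $x^*$ as the unique minimizer over $B(f)$ of the strictly convex quadratic $\Phi_0(x) := \sum_{i \in N} a_i \bigl(\tfrac{x_i}{a_i} + b_i\bigr)^2/2$, to extract a combinatorial ``level-set tightness'' property of $x^*$ from its optimality, and then to promote that property to simultaneous convex-function optimality via Lemma~\ref{lemma_leastab}. Strict convexity of $\Phi_0$ together with compactness of $B(f)$ (which holds since $f$ is real-valued on $2^N$) supply existence and uniqueness of $x^*$.

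Set $v_i := x^*_i/a_i + b_i$. For any pair $(i,j)$ with $v_i < v_j$, the directional derivative of $\Phi_0$ at $x^*$ along $e^i - e^j$ equals $v_i - v_j < 0$, so optimality forces this direction to be infeasible; hence there is an $x^*$-tight set $S_{ij} \in F(f, x^*)$ with $i \in S_{ij}$ and $j \notin S_{ij}$. Fixing $\tau \in \mathbb{R}$ and setting $M_\tau := \{k \in N : v_k \leq \tau\}$, every pair $(i,j)$ with $i \in M_\tau$ and $j \in N \setminus M_\tau$ satisfies $v_i \leq \tau < v_j$, so such $S_{ij}$ exists. Intersecting the $S_{ij}$ over $j \notin M_\tau$ for fixed $i$ and then unioning over $i \in M_\tau$, and invoking Lemma~\ref{lemma_sub} at each step, produces a tight set equal to $M_\tau$; hence $x^*(M_\tau) = f(M_\tau)$. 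A symmetric argument gives $x^*(L_\tau) = f^\#(L_\tau)$ for $L_\tau := \{i : v_i \geq \tau\}$.

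To verify weak $(a,b)$-submajorization via Lemma~\ref{lemma_leastab}(1), for each $\alpha \in \mathbb{R}$ the left-hand side $\sum_i a_i \max(0,\alpha + v_i)$ collapses to $\alpha \sum_{i \in L_{-\alpha}} a_i + x^*(L_{-\alpha}) + \sum_{i \in L_{-\alpha}} a_i b_i$, while the corresponding right-hand side with any $y \in B(f)$ is bounded below by the same expression after replacing $x^*(L_{-\alpha})$ with $y(L_{-\alpha})$. Since $B(f) = B_{\text{sup}}(f^\#)$ gives $y(L_{-\alpha}) \geq f^\#(L_{-\alpha}) = x^*(L_{-\alpha})$, the inequality follows. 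The weak $(a,b)$-supermajorization inequality is proved symmetrically from $M_\alpha$-tightness together with $y(M_\alpha) \leq f(M_\alpha) = x^*(M_\alpha)$, and the two combine via Lemma~\ref{lemma_leastab}(3) into $x^* \prec_{(a,b)} y$ for every $y \in B(f)$. Uniqueness is automatic since every least $(a,b)$-majorized element must also minimize the strictly convex $\Phi_0$.

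For the integral claim, my plan is to imitate the above with unit exchanges, and I expect this to be the main obstacle: the discrete first-order inequality from a feasible swap $x^* + e^j - e^i \in B^{\mathbb{Z}}(f)$ reads $v_j \geq v_i - 1$ rather than $v_j \geq v_i$ (because of the discrete second-order contribution), so the crisp gap that drove the continuous level-set argument is lost between adjacent integer levels. I would repair this by combining the weaker bound with the exchange property of integer base polyhedra (for any $y \in B^{\mathbb{Z}}(f)$ and any $i$ with $x^*_i > y_i$ there exists $j$ with $x^*_j < y_j$ and $x^* + e^j - e^i \in B^{\mathbb{Z}}(f)$); iterating this property in tandem with optimality of $x^*$ yields, for each integer threshold $\tau$, that the set $\{i : v_i \leq \tau\}$ is still $x^*$-tight, after which the weak sub- and supermajorization verifications proceed verbatim as in the continuous case. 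That the conclusion holds for \emph{every} minimizer of the quadratic finally follows from transitivity of $\prec_{(1,b)}$, since any two such minimizers must be mutually $\prec_{(1,b)}$-comparable.
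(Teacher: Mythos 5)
This lemma is not proved in the paper at all: it is imported verbatim from \cite{SchootUiterkamp2021}, so there is no internal proof to compare against. Judged on its own terms, your argument for the real-valued case is correct and complete: the strictly convex quadratic supplies existence and uniqueness, the first-order condition turns every descent direction $e^i-e^j$ with $v_i<v_j$ into a tight set separating $i$ from $j$, Lemma~\ref{lemma_sub} then makes every level set of $v$ tight for $f$ (resp.\ $f^{\#}$), and the verification through Lemma~\ref{lemma_leastab} closes the loop.

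The integral half, however, has a genuine gap: the intermediate claim that every integer level set $\lbrace i \mid v_i \leq \tau \rbrace$ is $x^*$-tight is false. Take $n=2$, $f(\lbrace 1\rbrace)=f(\lbrace 2\rbrace)=f(\lbrace 1,2\rbrace)=1$, so $B^{\mathbb{Z}}(f)=\lbrace (1,0),(0,1)\rbrace$, and $b=0$. Both points minimize $\sum_i \frac{1}{2}x_i^2$; choosing $x^*=(1,0)$ gives $v=(1,0)$ and $M_0=\lbrace 2\rbrace$, yet $x^*(\lbrace 2\rbrace)=0<1=f(\lbrace 2\rbrace)$. The exchange $x^*-e^1+e^2$ is feasible and objective-preserving, which is precisely why no tight set separates $2$ from $1$, and no iteration of the exchange property can manufacture one. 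Your ``verbatim'' supermajorization check then breaks at $\alpha=0$, since it would need $y(M_0)\leq x^*(M_0)$, i.e.\ $1\leq 0$ (the lemma's conclusion still holds here, but only by the symmetry of the two points, not by your route). The discrete case needs a different mechanism: from feasibility of $x^*+e^j-e^i$ the quadratic gives $v_j\geq v_i-1$, and convexity alone then yields $\phi(v_j+1)-\phi(v_j)\geq \phi(v_i)-\phi(v_i-1)$ for every convex $\phi$, i.e.\ $x^*$ satisfies Condition~\ref{cond_base} for every shifted separable convex objective simultaneously; one then invokes the sufficiency of that condition for global optimality over integral base polyhedra (Theorem~8.1 of \cite{Fujishige2005}). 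Finally, your transitivity remark for extending the conclusion to every quadratic minimizer is fine in spirit, but ``mutual comparability'' of two minimizers is not free: it requires the strict Schur-convexity of $t\mapsto \sum_i t_i^2$ to conclude that a least majorized minimizer and any other minimizer have permutation-equivalent shifted vectors.
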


A natural question is whether base polyhedra are the only sets containing least $(a,b)$-majorized elements. The answer to this question is no:    any least $(a,b)$-majorized element of a given base polyhedron is also a least $(a,b)$-majorized element of any subset of the base polyhedron that contains this element.    Thus, the existence of least $(a,b)$-majorized elements is not limited to ``nice" sets such as polyhedra and convex sets. However, we do show in Section~\ref{sec_char} that base polyhedra are the only compact convex subsets of $\mathbb{R}^n$ that have least $(a,b)$-majorized elements for \emph{all} pairs $(a,b)$.

Lemma~\ref{lemma_reduction} forms the basis for all existence results in the remainder of this section. We present our existence results for least weakly $(a,b)$-sub- and $(a,b)$-supermajorized elements in Section~\ref{sec_exist_subsuper} and those for least weakly absolutely $(a,b)$-majorized elements in Section~\ref{sec_exist_bi}.

\subsection{Existence of least weakly $(a,b)$-submajorized and $(a,b)$-supermajorized elements}
\label{sec_exist_subsuper}

In this subsection, we focus on the existence of least weakly $(a,b)$-sub- and $(a,b)$-supermajorized elements. First, we use the initial existence result in Lemma~\ref{lemma_reduction} to prove that submodular and supermodular polyhedra have least weakly $(a,b)$-super- and $(a,b)$-submajorized elements, respectively:
\begin{lemma}
Let a submodular function $f$ and vectors $a \in \mathbb{R}^n_{>0}$ and $b \in \mathbb{R}^n$ be given. 
\begin{itemize}
\item
Any least $(a,b)$-majorized element of $B(f)$ is also a least weakly $(a,b)$-supermajorized element of $P(f)$ and a least weakly $(a,b)$-submajorized element of $P_{\text{sup}}(f^{\#})$;
\item
If $f$ and $b$ are integral, then any least $(1,b)$-majorized element of $B^{\mathbb{Z}}(f)$ is also a least weakly $(1,b)$-supermajorized element of $P^{\mathbb{Z}}(f)$ and a least weakly $(1,b)$-submajorized element of $P^{\mathbb{Z}}_{\text{sup}}(f^{\#})$.
\end{itemize}
\label{lemma_reduction_P}
\end{lemma}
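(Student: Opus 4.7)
The plan is to leverage the fact, given by Lemma~\ref{lemma_ineq}, that vectors in a submodular (supermodular) polyhedron are dominated from above (below) by vectors in the corresponding base polyhedron, combined with the monotonicity assumption on the class of convex functions used to define weak $(a,b)$-super- or $(a,b)$-submajorization. The base polyhedron $B(f)$ already has a least $(a,b)$-majorized element by Lemma~\ref{lemma_reduction}, and by definition this element $x^*$ satisfies $\sum_{i \in N} a_i \phi(x^*_i / a_i + b_i) \leq \sum_{i \in N} a_i \phi(y_i / a_i + b_i)$ for \emph{every} continuous convex $\phi$ and $y \in B(f)$, in particular for all non-increasing and all non-decreasing continuous convex $\phi$.

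For the claim about $P(f)$, I would fix an arbitrary $y \in P(f)$ and invoke Lemma~\ref{lemma_ineq} to obtain $y' \in B(f)$ with $y \leq y'$. For any non-increasing continuous convex $\phi$, since $a_i > 0$ we have $y_i/a_i + b_i \leq y'_i/a_i + b_i$, hence $\phi(y_i/a_i + b_i) \geq \phi(y'_i/a_i + b_i)$ term by term. Multiplying by $a_i > 0$ and summing gives
\begin{equation*}
\sum_{i \in N} a_i \phi\!\left( \tfrac{y_i}{a_i} + b_i \right) \;\geq\; \sum_{i \in N} a_i \phi\!\left( \tfrac{y'_i}{a_i} + b_i \right) \;\geq\; \sum_{i \in N} a_i \phi\!\left( \tfrac{x^*_i}{a_i} + b_i \right),
\end{equation*}
where the second inequality is the least $(a,b)$-majorization property of $x^*$ on $B(f)$. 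Since this holds for every non-increasing continuous convex $\phi$, $x^*$ is a least weakly $(a,b)$-supermajorized element of $P(f)$.

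For the claim about $P_{\text{sup}}(f^{\#})$, I would use the identity $B(f) = B_{\text{sup}}(f^{\#})$ recorded in Section~\ref{sec_pre}, so that $x^*$ is also a least $(a,b)$-majorized element of $B_{\text{sup}}(f^{\#})$. Given $y \in P_{\text{sup}}(f^{\#})$, the supermodular version of Lemma~\ref{lemma_ineq} yields $y' \in B_{\text{sup}}(f^{\#})$ with $y \geq y'$. Now the monotonicity direction flips: for any non-decreasing continuous convex $\phi$ we get $\phi(y_i/a_i + b_i) \geq \phi(y'_i/a_i + b_i)$, and the same two-step chain as above (dominance, then the base-polyhedron optimality of $x^*$) shows that $x^*$ is a least weakly $(a,b)$-submajorized element of $P_{\text{sup}}(f^{\#})$.

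The integral statement is handled by exactly the same argument, reading off the integral half of Lemma~\ref{lemma_reduction} to obtain a least $(1,b)$-majorized $x^* \in B^{\mathbb{Z}}(f)$, and applying the integral clauses of Lemma~\ref{lemma_ineq} to produce the dominating vectors in $B^{\mathbb{Z}}(f)$ and $B^{\mathbb{Z}}_{\text{sup}}(f^{\#})$. I do not foresee a genuine obstacle here: the work is purely bookkeeping, and the only conceptual ingredient beyond Lemma~\ref{lemma_reduction} is the monotonicity-respecting dominance provided by Lemma~\ref{lemma_ineq}, which is why the argument isolates the weakly super-/submajorized variants rather than full $(a,b)$-majorization (for which equality of coordinate sums, not merely an inequality, would be required and need not hold off of $B(f)$).
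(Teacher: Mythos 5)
Your proposal is correct and follows essentially the same route as the paper's own proof: take the least $(a,b)$-majorized element $x^*$ of $B(f)$ from Lemma~\ref{lemma_reduction}, use Lemma~\ref{lemma_ineq} to dominate an arbitrary point of $P(f)$ (resp. $P_{\text{sup}}(f^{\#})$) by a base-polyhedron point, and chain the monotonicity of $\phi$ with the optimality of $x^*$ on the base polyhedron. The identification $B(f)=B_{\text{sup}}(f^{\#})$ and the integral clauses are handled exactly as the paper does (the paper simply declares those cases ``analogous'').
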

\begin{proof}
We prove the first statement for the case of $P(f)$ (the proofs for the cases of $P_{\text{sup}}(f^{\#})$ and the second statement are analogous). Let $x^*$ be a least $(a,b)$-majorized element of $B(f)$, which exists due to Lemma~\ref{lemma_reduction}. Given a vector $z \in P(f)$, it follows from Lemma~\ref{lemma_ineq} that there exists a vector $y \in B(f)$ with $z \leq y$. Then we have for any non-increasing continuous convex function $\phi$ that
$
    \sum_{i \in N} a_i \phi (\frac{x_i^* + b_i}{a_i}  )
    \leq
        \sum_{i \in N} a_i \phi (\frac{y_i + b_i}{a_i}  )
    \leq     \sum_{i \in N} a_i \phi (\frac{z_i + b_i}{a_i}  ),
$
where the first inequality follows since $x^*$ is a least $(a,b)$-majorized element of $B(f)$ and the second inequality since $z \leq y$ and $\phi$ is non-increasing. Because $z \in P(f)$ was chosen arbitrarily, it follows that $x^*$ is a least weakly $(a,b)$-supermajorized element of $P(f)$.
\end{proof}

It follows directly from Lemma~\ref{lemma_reduction_P} that any set that contains a least $(a,b)$-majorized element of a base polyhedron and that is contained in the corresponding submodular or supermodular polyhedron, has a least weakly $(a,b)$-super- or $(a,b)$-submajorized element, respectively:
\begin{corollary}
Let a submodular function $f$ and vectors $a \in \mathbb{R}^n_{>0}$ and $b \in \mathbb{R}^n$ be given. Any least $(a,b)$-majorized element $x^*$ of $B(f)$ is also a 
\begin{itemize}
\item 
least weakly $(a,b)$-supermajorized element of any subset of $P(f)$ that contains $x^*$;
\item
least weakly $(a,b)$-submajorized element of any subset of $P_{\text{sup}}(f^{\#})$ that contains $x^*$.
\end{itemize}
Moreover, if $f$ and $b$ are integral, then any least $(1,b)$-majorized element $x^*$ of $B^{\mathbb{Z}}(f)$ is also a
\begin{itemize}
\item 
least weakly $(1,b)$-supermajorized element of any subset of $P^{\mathbb{Z}}(f)$ that contains $x^*$;
\item
least weakly $(1,b)$-submajorized element of any subset of $P^{\mathbb{Z}}_{\text{sup}}(f^{\#})$ that contains $x^*$.
\end{itemize}
\label{lemma_BCP}
\end{corollary}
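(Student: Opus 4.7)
The plan is to observe that this corollary is an essentially immediate consequence of Lemma~\ref{lemma_reduction_P}, exploiting the simple fact that the defining property of a least weakly $(a,b)$-sub- or $(a,b)$-supermajorized element is a universally quantified inequality over all vectors in the set: weakening the set to a subset that still contains the candidate vector $x^*$ preserves both the membership requirement and the inequality.

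More concretely, I would proceed as follows. Let $x^*$ be any least $(a,b)$-majorized element of $B(f)$, which exists by Lemma~\ref{lemma_reduction}. By the first bullet of Lemma~\ref{lemma_reduction_P}, $x^*$ is a least weakly $(a,b)$-supermajorized element of the entire submodular polyhedron $P(f)$. Thus for every non-increasing continuous convex function $\phi$ and every $y \in P(f)$, the inequality
\begin{equation*}
\sum_{i \in N} a_i \phi\!\left(\tfrac{x^*_i}{a_i} + b_i\right) \leq \sum_{i \in N} a_i \phi\!\left(\tfrac{y_i}{a_i} + b_i\right)
\end{equation*}
holds. Now let $C \subseteq P(f)$ be any subset containing $x^*$. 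Then $x^* \in C$ and the above inequality holds in particular for every $y \in C$, which is precisely the definition of $x^*$ being a least weakly $(a,b)$-supermajorized element of $C$. The same argument with $P_{\text{sup}}(f^{\#})$ in place of $P(f)$ and non-decreasing continuous convex $\phi$ in place of non-increasing ones yields the statement for subsets of $P_{\text{sup}}(f^{\#})$. The integral case is handled identically by invoking the second bullet of Lemma~\ref{lemma_reduction_P} together with the fact that any integral least $(1,b)$-majorized element of $B^{\mathbb{Z}}(f)$ exists (via Lemma~\ref{lemma_reduction}).

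There is effectively no main obstacle: all the real work has already been done in Lemma~\ref{lemma_reduction_P} (which in turn builds on Lemma~\ref{lemma_ineq} for the dominance of submodular-polyhedron vectors by base-polyhedron vectors and on Lemma~\ref{lemma_reduction} for the existence of the base-polyhedron least majorized element). The only conceptual point worth making explicit is that the property ``$x^*$ is a least weakly majorized element of $D$'' is monotone in $D$ in the sense that it is inherited by any subset $C \subseteq D$ with $x^* \in C$, so once we have the statement at the level of the full (super)submodular polyhedron the corollary follows by set restriction alone.
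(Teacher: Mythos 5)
Your proposal is correct and matches the paper's reasoning exactly: the paper states the corollary as an immediate consequence of Lemma~\ref{lemma_reduction_P}, relying on precisely the subset-restriction observation you make explicit. Nothing further is needed.
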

Analogously to    Lemma~\ref{lemma_reduction},    Corollary~\ref{lemma_BCP} implies that sets containing least weakly $(a,b)$-sub- or $(a,b)$-supermajorized elements need not be ``nice".

Corollary~\ref{lemma_BCP} allows us to prove the existence of least weakly $(a,b)$-sub and -supermajorized elements for sets that are extensions or generalizations of submodular and supermodular polyhedra. In particular, this applies to bisubmodular polyhedra:
\begin{lemma}
Let a bisubmodular function $h$ and vectors $a \in \mathbb{R}^n_{>0}$ and $b \in \mathbb{R}^n$ be given. Then the following hold:
\begin{itemize}
\item
 Any least $(a,b)$-majorized element of the base polyhedron $B(h_{(N,\emptyset)})$ is also a least weakly $(a,b)$-supermajorized element of $\tilde{B}(h)$;
\item
 Any least $(a,b)$-majorized element of the base polyhedron $B_{\text{sup}}(-h_{(\emptyset,N)})$ is also a least weakly $(a,b)$-submajorized element of $\tilde{B}(h)$;
\end{itemize}
Moreover, if $h$ and $b$ are integral, then also the following hold:
\begin{itemize}
\item
 Any least $(1,b)$-majorized element of the integral base polyhedron $B^{\mathbb{Z}}(h_{(N,\emptyset)})$ is also a least weakly $(1,b)$-supermajorized element of $\tilde{B}^{\mathbb{Z}}(h)$;
\item
 Any least $(1,b)$-majorized element of the integral base polyhedron $B^{\mathbb{Z}}_{\text{sup}}(-h_{(\emptyset,N)})$ is also a least weakly $(1,b)$-submajorized element of $\tilde{B}^{\mathbb{Z}}(h)$;
\end{itemize}
\label{lemma_weakly_bi}
\end{lemma}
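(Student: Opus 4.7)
The plan is to reduce both statements to Corollary~\ref{lemma_BCP} by sandwiching the bisubmodular polyhedron $\tilde{B}(h)$ between an appropriate orthant-base polyhedron and the corresponding orthant (super)submodular polyhedron, taking the orthants $(N,\emptyset)$ and $(\emptyset,N)$ for the two bullets respectively.

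For the first bullet, I would first observe that, because $T=\emptyset$, the definition of $P_{(S,T)}(h)$ collapses to $P_{(N,\emptyset)}(h) = P(h_{(N,\emptyset)})$ and consequently $B_{(N,\emptyset)}(h) = B(h_{(N,\emptyset)})$. Two inclusions from Section~\ref{sec_bisubmodular} then enter: the computation following Figure~\ref{fig_bi_base} yields $B(h_{(N,\emptyset)}) = B_{(N,\emptyset)}(h) \subseteq \tilde{B}(h)$, and the fact that $\tilde{B}(h)$ is the intersection of all orthant polyhedra gives $\tilde{B}(h) \subseteq P_{(N,\emptyset)}(h) = P(h_{(N,\emptyset)})$. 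Hence for any least $(a,b)$-majorized element $x^*$ of $B(h_{(N,\emptyset)})$ (which exists by Lemma~\ref{lemma_reduction}) we have $x^* \in \tilde{B}(h) \subseteq P(h_{(N,\emptyset)})$, so $\tilde{B}(h)$ is a subset of $P(h_{(N,\emptyset)})$ containing $x^*$. Invoking the first bullet of Corollary~\ref{lemma_BCP} with the submodular function $f := h_{(N,\emptyset)}$ then yields that $x^*$ is a least weakly $(a,b)$-supermajorized element of $\tilde{B}(h)$.

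The second bullet proceeds analogously at the orthant $(\emptyset,N)$. A short computation shows that the sign flip on the coordinates in $T=N$ reverses the inequality direction in the definition of $P_{(\emptyset,N)}(h)$, so that $P_{(\emptyset,N)}(h) = P_{\text{sup}}(-h_{(\emptyset,N)})$ and $B_{(\emptyset,N)}(h) = B_{\text{sup}}(-h_{(\emptyset,N)})$. The sandwich $B_{\text{sup}}(-h_{(\emptyset,N)}) \subseteq \tilde{B}(h) \subseteq P_{\text{sup}}(-h_{(\emptyset,N)})$ then follows from the same two inclusions, and I would apply the second bullet of Corollary~\ref{lemma_BCP} taking $f$ to be the dual submodular function of $-h_{(\emptyset,N)}$, so that $P_{\text{sup}}(f^{\#}) = P_{\text{sup}}(-h_{(\emptyset,N)})$ and $B(f) = B_{\text{sup}}(-h_{(\emptyset,N)})$. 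Intersecting every inclusion above with $\mathbb{Z}^n$ yields the corresponding integer sandwiches $B^{\mathbb{Z}}(h_{(N,\emptyset)}) \subseteq \tilde{B}^{\mathbb{Z}}(h) \subseteq P^{\mathbb{Z}}(h_{(N,\emptyset)})$ and its $(\emptyset,N)$-counterpart, and the integer clauses of Lemma~\ref{lemma_reduction} and Corollary~\ref{lemma_BCP} close both integer statements. The only real obstacle is bookkeeping: carefully translating between the two parametrizations $B(h_{(N,\emptyset)})$ vs.\ $B_{(N,\emptyset)}(h)$ and $B_{\text{sup}}(-h_{(\emptyset,N)})$ vs.\ $B_{(\emptyset,N)}(h)$ so that the orthant sandwich inclusions line up exactly with the hypotheses of Corollary~\ref{lemma_BCP}.
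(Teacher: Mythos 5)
Your proposal is correct and follows essentially the same route as the paper: identify $P(h_{(N,\emptyset)})=P_{(N,\emptyset)}(h)$ and $B(h_{(N,\emptyset)})=B_{(N,\emptyset)}(h)$, use the two inclusions from Section~\ref{sec_bisubmodular} to get the sandwich $B(h_{(N,\emptyset)})\subseteq\tilde{B}(h)\subseteq P(h_{(N,\emptyset)})$, and invoke Corollary~\ref{lemma_BCP}. The paper only writes out the first bullet and declares the rest analogous, whereas you explicitly carry out the sign-flip bookkeeping at the orthant $(\emptyset,N)$ and the intersection with $\mathbb{Z}^n$; these details are correct and consistent with what the paper leaves implicit.
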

\begin{proof}
We prove that any least $(a,b)$-majorized element of $B(h_{(N,\emptyset)})$ is also a least weakly $(a,b)$-supermajorized element of $\tilde{B}(h)$ (the proofs of the other three statements are analogous).    Recall from Section~\ref{sec_bisubmodular} that, since $(N,\emptyset)$ is an orthant, we have $B_{(N,\emptyset)}(h) \subseteq \tilde{B}(h) \subseteq P_{(N,\emptyset)}(h)$. Also, note that for the specific orthant $(N,\emptyset)$, we have $P_{(N,\emptyset)}(h) = P(h_{(N,\emptyset)}) $ and $B_{(N,\emptyset)}(h) = B(h_{(N,\emptyset)})$. Thus, $\tilde{B}(h)$ is a subset of the submodular polyhedron $P(h_{(N,\emptyset)})$ and contains the corresponding base polyhedron $B(h_{(N,\emptyset}))$. Now the result of the lemma follows from Corollary~\ref{lemma_BCP}. 
\end{proof}

Alternatively, for integral bisubmodular polyhedra, Lemma~\ref{lemma_weakly_bi} can be proven by recognizing such polyhedra as special cases of jump systems \cite{Bouchet1995} and slightly adjusting the proof of Theorem~2.1 in \cite{Ando1996_jump}, which shows that jump systems have both a least weakly $(1,0)$-sub- and $(1,0)$-supermajorized element.

Lemma~\ref{lemma_weakly_bi} implies that also all special cases of bisubmodular polyhedra have both a least weakly $(a,b)$-sub- and $(a,b)$-supermajorized element. One example of such a special case are (bounded) generalized polymatroids (see, e.g., \cite{Bouchet1995}). In particular, Lemma~\ref{lemma_weakly_bi} generalizes Corollary~3.3 of \cite{Tamir1995}, where it is proven that bounded generalized polymatroids have both a least weakly $(1,0)$-sub- and $(1,0)$-supermajorized element.

\subsection{Existence of least weakly absolutely $(a,b)$-majorized elements}
\label{sec_exist_bi}

\todo{Fix todo removal error}

In this subsection, we prove the existence of least weakly absolutely $(a,b)$-majorized elements for submodular and supermodular polyhedra (Lemma~\ref{lemma_P_even}) and bisubmodular polyhedra (Lemma~\ref{lemma_even_bi}).

\begin{lemma}
Let a submodular function $f$, a supermodular function $g$, and vectors $a \in \mathbb{R}^n_{>0}$ and $b \in \mathbb{R}^n$ be given. Then the following hold:
\begin{itemize}
\item
Any least weakly $(a,b)$-supermajorized element of $P(f^{-b})$ is also a least weakly absolutely $(a,b)$-majorized element of $P(f)$;
\item
Any least weakly $(a,b)$-submajorized element of $P_{\text{sup}}(g^{-b})$ is also a least weakly absolutely $(a,b)$-majorized element of $P_{\text{sup}}(g)$.
\end{itemize}
Moreover, if $f$, $g$, and $b$ are integral, then also the following hold:
\begin{itemize}
\item
Any least weakly $(1,b)$-supermajorized element of $P^{\mathbb{Z}}(f^{-b})$ is also a least weakly absolutely $(1,b)$-majorized element of $P^{\mathbb{Z}}(f)$;
\item
Any least weakly $(1,b)$-submajorized element of $P^{\mathbb{Z}}_{\text{sup}}(g^{-b})$ is also a least weakly absolutely $(1,b)$-majorized element of $P^{\mathbb{Z}}_{\text{sup}}(g)$.
\end{itemize}
\label{lemma_P_even}
\end{lemma}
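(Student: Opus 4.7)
The plan is to combine the piecewise-linear characterizations in Lemma~\ref{lemma_leastab} with a componentwise truncation that maps arbitrary points of $P(f)$ into $P(f^r)$. The key observation is that, by the choice $r_i = -a_i b_i$, every $x \in P(f^r) = \lbrace x \in P(f) \ | \ x \leq r \rbrace$ satisfies $\frac{x_i}{a_i} + b_i \leq 0$, so $-\frac{x_i}{a_i} - b_i = \left|\frac{x_i}{a_i} + b_i\right|$. Combined with the elementary identity $\max(\alpha - u, 0, \alpha + u) = \max(0, \alpha + |u|)$, valid for every $u, \alpha \in \mathbb{R}$, this shows that the supermajorization test of Lemma~\ref{lemma_leastab}(2) and the absolute-majorization test of Lemma~\ref{lemma_leastab}(4) collapse to the same quantity whenever both vectors in question lie in $P(f^r)$.

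Given a least weakly $(a,b)$-supermajorized element $x^*$ of $P(f^r)$ and an arbitrary $y \in P(f)$, I would introduce the truncation $\tilde{y}_i := \min(y_i, r_i)$. Since $\tilde{y} \leq y$ and the submodular polyhedron $P(f)$ is downward-closed (because $\tilde{y}(A) \leq y(A) \leq f(A)$ for every $A \subseteq N$), we have $\tilde{y} \in P(f)$; combined with $\tilde{y} \leq r$ this gives $\tilde{y} \in P(f^r)$ by the reduction identity recalled in Section~\ref{sec_pre}. A case split on whether $y_i \leq r_i$ or $y_i > r_i$ then shows $\left|\frac{\tilde{y}_i}{a_i} + b_i\right| \leq \left|\frac{y_i}{a_i} + b_i\right|$ for every $i$: in the first case the two expressions coincide, while in the second case $\frac{\tilde{y}_i}{a_i} + b_i = 0$ whereas $\frac{y_i}{a_i} + b_i > 0$.

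Putting the pieces together, for every $\alpha \in \mathbb{R}$ I obtain
\begin{equation*}
\sum_{i \in N} a_i \max\!\left(0, \alpha + \left|\tfrac{x^*_i}{a_i} + b_i\right|\right) \leq \sum_{i \in N} a_i \max\!\left(0, \alpha + \left|\tfrac{\tilde{y}_i}{a_i} + b_i\right|\right) \leq \sum_{i \in N} a_i \max\!\left(0, \alpha + \left|\tfrac{y_i}{a_i} + b_i\right|\right),
\end{equation*}
where the first inequality applies the supermajorization property of $x^*$ on $P(f^r)$ (after using the collapse identity on both sides), and the second follows from monotonicity of $\max(0, \cdot)$ together with the truncation inequality. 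By Lemma~\ref{lemma_leastab}(4) this reads exactly $x^* \prec^{\text{abs}}_{(a,b)} y$, proving the submodular bullet. The supermodular bullet is handled symmetrically via $\tilde{y}_i := \max(y_i, r_i) \in P_{\text{sup}}(g^r)$ and Lemma~\ref{lemma_leastab}(1), using that $x \geq r$ now forces $\frac{x_i}{a_i} + b_i \geq 0$. The integral versions go through without change, since the truncations preserve integrality whenever $b$ is integral and $a = 1$, and the reduction identities in Section~\ref{sec_pre} also apply to the $\mathbb{Z}$-variants. The only points that require genuine care are verifying the collapse identity $\max(\alpha - u, 0, \alpha + u) = \max(0, \alpha + |u|)$ and checking that $\tilde{y}$ indeed lies in $P(f^r)$; beyond this, the argument is essentially mechanical.
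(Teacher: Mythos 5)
Your proposal is correct and follows essentially the same route as the paper: truncate $y$ at $r$ via $\tilde{y}_i := \min(y_i,r_i)$ to land in $P(f^r)$, observe that every point of $P(f^r)$ has $\frac{x_i}{a_i}+b_i \leq 0$ so that even and non-increasing objectives coincide there, and then chain the supermajorization property with the componentwise inequality $\bigl|\frac{\tilde{y}_i}{a_i}+b_i\bigr| \leq \bigl|\frac{y_i}{a_i}+b_i\bigr|$. The only (harmless) difference is that you verify the inequality on the max-function test family of Lemma~\ref{lemma_leastab} via the collapse identity $\max(\alpha-u,0,\alpha+u)=\max(0,\alpha+|u|)$, whereas the paper argues directly with an arbitrary even convex $\phi$ and its non-increasing truncation $\tilde{\phi}$.
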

\begin{proof}
We prove that any least weakly $(a,b)$-supermajorized element of $P(f^{-b})$ is also a least weakly absolutely $(a,b)$-majorized element of $P(f)$ (the proofs of the other three statements are analogous). Consider the reduction $f^{-b}$ and let $x^*$ be a least weakly $(a,b)$-supermajorized element of $P(f^{-b})$ (recall that $f^{-b}$ is submodular and that $x^*$ exists by Lemma~\ref{lemma_reduction_P}). Note that, by definition of $P(f^{-b})$, we have $x^* \leq -b$ and thus $\frac{x^*_i + b_i}{a_i}  \leq  0$ for all $i \in N$.

Let a vector $x \in P(f)$ be given and define the vector $\tilde{x} \in \mathbb{R}^n$ as $\tilde{x}_i := \min (x_i, -b_i)$ for $i \in N$. Note that $\tilde{x} \in P(f)$ since $\tilde{x} \leq x$ and $x \in P(f)$. It follows that $\tilde{x} \in P(f^{-b})$ since $\tilde{x} \leq {-b}$. Moreover, we have $\frac{\tilde{x}_i + b_i}{a_i}  \leq 0$. Let an even continuous convex function $\phi$ be given and let
\begin{equation*}
\tilde{\phi}(\zeta) := \begin{cases}
\phi(\zeta) & \text{if } \zeta \leq 0; \\
\phi(0) & \text{if } \zeta \geq 0.
\end{cases}
\end{equation*}
We can now derive the following:
\begingroup
\begin{subequations}
\allowdisplaybreaks
\begin{align}
 \sum_{i \in N} a_i \phi \left(\frac{x^*_i + b_i }{a_i} \right)
&= \sum_{i \in N} a_i \tilde{\phi}\left(\frac{x^*_i + b_i }{a_i}  \right) \label{eq_P_even_01} \\
&\leq \sum_{i \in N} a_i \tilde{\phi}\left(\frac{\tilde{x}_i + b_i }{a_i}  \right) \label{eq_P_even_02}\\
&=  \sum_{i \in N} a_i \phi \left(\frac{\tilde{x}_i + b_i }{a_i}  \right) \label{eq_P_even_03}\\
&= \sum_{i: \tilde{x}_i = x_i} a_i \phi \left( \frac{x_i + b_i }{a_i}  \right) 
+ \sum_{i: \tilde{x}_i = -b_i < x_i} a_i \phi (0) \nonumber \\
& \leq \sum_{i \in N} a_i \phi \left( \frac{x_i + b_i }{a_i}  \right), \nonumber
\end{align}
\end{subequations}
\endgroup
where~(\ref{eq_P_even_01}) follows since $\frac{x_i^* + b_i }{a_i}  \leq 0$ for all $i \in N$, (\ref{eq_P_even_02}) since $x^*$ is a least weakly $(a,b)$-supermajorized element of $P(f^{-b})$, $\tilde{\phi}$ is non-increasing, and $\tilde{x} \in P(f^{-b})$, and (\ref{eq_P_even_03}) since $\frac{\tilde{x}_i + b_i }{a_i}  \leq 0$ for all $i \in N$. Since both $x$ and $\phi$ were chosen arbitrarily, it follows that $x^*$ is a least weakly absolutely $(a,b)$-majorized element of $P(f)$.
\end{proof}

Note that, contrary to least weakly $(a,b)$-submajorized and $(a,b)$-supermajorized elements, a least weakly absolutely $(a,b)$-majorized element of a submodular or supermodular polyhedron does not necessarily lie in the corresponding base polyhedron, even if the former does not contain the vector $-b$. An example of this is the submodular polyhedron    defined    by the function $f: 2^{\lbrace 1,2 \rbrace} \rightarrow \mathbb{R}$ with $f(\lbrace 1 \rbrace) = -1$, $f(\lbrace 2 \rbrace) = 2$, and $f(\lbrace 1,2 \rbrace) = 0$, whose least weakly absolutely $(1,0)$-majorized element is $(-1,0) \not\in B(f)$.

We are now ready to prove that also bisubmodular polyhedra have least weakly absolutely $(a,b)$-majorized elements:
\begin{lemma}
Let a bisubmodular function $h$ and vectors $a \in \mathbb{R}^n_{>0}$ and $b \in \mathbb{R}^n$ be given. Then the following hold:
\begin{itemize}
\item Any solution to the problem $\min_{x \in \tilde{B}(h)} \sum_{i\in N}  \frac{1}{2} \frac{(x_i + b_i)^2}{a_i} $ is a least weakly absolutely $(a,b)$-majorized element of $\tilde{B}(h)$;
\item If $h$ and $b$ are integral, then any solution to the problem $\min_{x \in \tilde{B}^{\mathbb{Z}}(h)} \sum_{i\in N} \frac{1}{2} (x_i + b_i)^2$ is a least weakly absolutely $(1,b)$-majorized element of $\tilde{B}^{\mathbb{Z}}(h)$.
\end{itemize}
\label{lemma_even_bi}
\end{lemma}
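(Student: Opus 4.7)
The plan is to verify the characterization of least weakly absolutely $(a,b)$-majorized elements from Lemma~\ref{lemma_leastab}(4) for any quadratic minimizer $x^*$, via an orthant decomposition of $\tilde{B}(h)$ that reduces the problem to the submodular setting of Lemmas~\ref{lemma_reduction} and~\ref{lemma_P_even}. First I would define $r_i := -a_ib_i$, so that the sign of $x^*_i/a_i+b_i$ coincides with the sign of $x^*_i-r_i$, and partition $N$ into $S:=\{i:x^*_i\ge r_i\}$ and $T:=\{i:x^*_i<r_i\}$; this yields an orthant $(S,T)$ of $\mathbb{R}^n$. By the results in Section~\ref{sec_bisubmodular}, $\tilde{B}(h)\subseteq P_{(S,T)}(h)$, and the sign-flip $\hat{x}_i:=\sigma_i x_i$ (with $\sigma_i=+1$ on $S$ and $\sigma_i=-1$ on $T$) maps $P_{(S,T)}(h)$ bijectively onto the submodular polyhedron $P(h_{(S,T)})$. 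Setting $\hat{b}_i:=\sigma_i b_i$, the transformed quadratic is invariant under the sign-flip (since $\sigma_i^2=1$), and $\hat{x}^*_i/a_i+\hat{b}_i=|x^*_i/a_i+b_i|\ge 0$ for every $i\in N$.

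The crux is to show that the biset $(S,T)$ is tight at $x^*$, equivalently $\hat{x}^*\in B(h_{(S,T)})$, and that $\hat{x}^*$ is the least $(a,\hat{b})$-majorized element of this base polyhedron. For this, I would invoke the KKT optimality conditions of the quadratic problem over $\tilde{B}(h)$: there exist Lagrange multipliers $\lambda_{U,V}\ge 0$ supported on tight bisets $(U,V)\in F(h,x^*)$ such that $x^*_i/a_i+b_i = -\sum_{(U,V):\,i\in U}\lambda_{U,V}+\sum_{(U,V):\,i\in V}\lambda_{U,V}$ for each $i\in N$. By the reduced-union closure of tight bisets in Lemma~\ref{lemma_bisub}, combining all tight bisets with positive multipliers produces a tight biset whose positive and negative parts match the sign partition of $x^*-r$, forcing $(S,T)\in F(h,x^*)$; Lemma~\ref{lemma_reduction} then identifies $\hat{x}^*$ as the unique least $(a,\hat{b})$-majorized element of $B(h_{(S,T)})$.

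Finally, to transfer the majorization back to $\tilde{B}(h)$, for each $y\in\tilde{B}(h)$ let $\hat{y}\in P(h_{(S,T)})$ be its sign-flip. Evenness and convexity of $\phi$ give $\phi(y_i/a_i+b_i)=\phi(|y_i/a_i+b_i|)=\phi(|\hat{y}_i/a_i+\hat{b}_i|)$, so the target inequality becomes a weighted comparison in the submodular polyhedron whose left-hand side involves only non-negative arguments. A truncation argument in the spirit of the proof of Lemma~\ref{lemma_P_even}, combined with Lemma~\ref{lemma_ineq} (which provides $\hat{y}'\in B(h_{(S,T)})$ with $\hat{y}\le\hat{y}'$ coordinatewise) and the least-$(a,\hat{b})$-majorized property of $\hat{x}^*$ in $B(h_{(S,T)})$, then yields the desired inequality. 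The integral case follows identically using the integral analogues of the invoked results. The main obstacle I expect is the middle-paragraph KKT-based tightness argument, which bridges the primal-dual optimality of $x^*$ with the combinatorial tight-biset structure of $\tilde{B}(h)$ via Lemma~\ref{lemma_bisub}.
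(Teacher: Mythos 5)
Your overall strategy --- flip signs into an orthant, reduce to a submodular polyhedron, and invoke Lemma~\ref{lemma_reduction} and Lemma~\ref{lemma_P_even} --- is the same as the paper's, but the pivotal middle step is wrong in two ways. First, the orientation of your orthant is reversed. Optimality of the quadratic over $\tilde{B}(h)$ tells you that coordinates with $x^*_i<r_i$ (negative derivative) cannot be \emph{increased}, hence are positively saturated, while coordinates with $x^*_i>r_i$ cannot be \emph{decreased}, hence are negatively saturated. Any tight biset you can extract from this (via KKT or via dependence sets and Lemma~\ref{lemma_bisub}) therefore has $\lbrace i : x^*_i<r_i\rbrace$ in its \emph{positive} part and $\lbrace i: x^*_i>r_i\rbrace$ in its \emph{negative} part --- the opposite of your $(S,T)$ with $S=\lbrace i: x^*_i\ge r_i\rbrace$. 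A one-dimensional counterexample: take $\tilde{B}(h)=[1,2]$ (so $h(\lbrace 1\rbrace,\emptyset)=2$, $h(\emptyset,\lbrace 1\rbrace)=-1$), $a=1$, $b=0$, so $r=0$ and $x^*=1$. Your construction gives $S=\lbrace 1\rbrace$, $T=\emptyset$, yet $x^*(S)-x^*(T)=1\neq 2=h(S,T)$, so $(S,T)\notin F(h,x^*)$; the tight biset is $(\emptyset,\lbrace 1\rbrace)$. The reversed orientation also breaks the final appeal to Lemma~\ref{lemma_P_even}, whose first bullet requires the flipped values $\hat{x}^*_i/a_i+\hat{b}_i$ to be nonpositive (so that $\hat{x}^*\le\hat{r}$ and one can pass through the reduction $f^{\hat{r}}$), whereas your flip makes them nonnegative.

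Second, even with the correct orientation, the tight biset need not be an orthant: a coordinate with $x^*_i=r_i$ may be neither positively nor negatively saturated and may lie in no tight constraint at all (e.g.\ $\tilde{B}(h)=[-1,2]$ with $r=0$ gives $x^*=0$ interior). Your KKT identity is consistent with $\lambda_{U,V}=0$ on all bisets containing such an index, so nothing forces it into a tight biset, and the reduced union $\sqcup$ can only combine bisets that actually are tight. The paper therefore works with a possibly proper tight biset $(\tilde{X},\tilde{Y})\sqsupseteq(A_+,A_-)$, restricts the problem to $\tilde{X}\cup\tilde{Y}$, and --- crucially --- proves as a separate second step that every $x\in B_{(\tilde{X},\tilde{Y})}(h)$ with $x_i=r_i$ off $\tilde{X}\cup\tilde{Y}$ still lies in $\tilde{B}(h)$. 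Without that padding argument you cannot conclude that the restriction of $x^*$ is the least $(a,\tilde{b})$-majorized element of the restricted base polyhedron (the comparison class is not controlled), and the coordinates off $\tilde{X}\cup\tilde{Y}$ must be handled by noting that there $\phi(x^*_i/a_i+b_i)=\phi(0)$ is the unconstrained minimum of an even convex function. Your outline omits both of these ingredients.
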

\begin{proof} 
We prove the lemma for the case of bisubmodular polyhedra (the proof for the case of integral bisubmodular polyhedra is analogous). Let $h$ be a bisubmodular function. If $-b \in \tilde{B}(h)$, then $-b$ is the unique least weakly absolutely $(a,b)$-majorized element of $\tilde{B}(h)$ since for any $x \in \mathbb{R}^n$ and even continuous convex function $\phi$ we have $
\sum_{i \in N} a_i \phi ( \frac{x_i + b_i}{a_i} )
\geq \sum_{i \in N} a_i \phi(0)
= \sum_{i \in N} a_i \phi (\frac{-b_i + b_i}{a_i}  ).
$
Thus, suppose to the contrary that $-b \not\in \tilde{B}(h)$ and let $x^*$ be an optimal solution to the problem $\min_{x \in \tilde{B}(h)}  \sum_{i \in N} \frac{1}{2} \frac{(x_i + b_i)^2}{a_i} $. We define the following partition of the index set $N$ based on the difference between $x^*$ and $-b$:
\begin{align*}
A_+ &:= \lbrace i \in N \ | \ x_i^* < -b_i \rbrace; &
A_- &:= \lbrace i \in N \ | \ x_i^* > -b_i \rbrace; &
A_0 &:= \lbrace i \in N \ | \ x_i^* = -b_i \rbrace.
\end{align*}
   It can be shown by means of an exchange argument that there exists a pair    $(\tilde{X},\tilde{Y}) \in 3^N$    such that    $(\tilde{X},\tilde{Y}) \sqsupseteq (A_+,A_-)$    and    $x^*(\tilde{X}) - x^*(\tilde{Y}) = h(\tilde{X},\tilde{Y})$ (see, e.g., Theorem~3.1 in \cite{Fujishige1997}).        Let $(S,T)$ be an orthant that contains $(\tilde{X},\tilde{Y})$. Finally, we introduce the following notation: for a given vector $v \in \mathbb{R}^n$, let $\tilde{v}$ be a corresponding vector with $\tilde{v}_i = - v_i$ if $i \in \tilde{Y}$ and $\tilde{v}_i = v_i$ otherwise.

The proof consists of two parts, namely showing that $\tilde{x}^* \in B((h_{(S,T)})^{-\tilde{b}})$ and that  $\tilde{x} \in B((h_{(S,T)})^{-\tilde{b}})$ implies $x \in \tilde{B}(h)$. This implies that $\tilde{x}^*$ is a least $(a,\tilde{b})$-majorized element of $B((h_{(S,T)})^{-\tilde{b}})$ since $x^*$ minimizes $\sum_{i \in N} \frac{1}{2} \frac{(x_i + b_i)^2}{a_i}$ over $\tilde{B}(h)$ and $B((h_{(S,T)})^{-\tilde{b}})$ is a base polyhedron containing $x^*$. It follows from Lemma~\ref{lemma_reduction_P} that $\tilde{x}^*$ is a least weakly $(a,\tilde{b})$-submajorized element of $P((h_{(S,T)})^{-\tilde{b}})$ and subsequently from Lemma~\ref{lemma_P_even} that $\tilde{x}^*$ is a least weakly absolutely $(a,\tilde{b})$-majorized element of $P(h_{(S,T)})$. Thus, $x^*$ is a least weakly absolutely $(a,b)$-majorized element of $P_{(S,T)}(h)$ and also of $\tilde{B}(h)$ since $\tilde{B}(h) \subseteq P_{(S,T)}(h)$.

For the first part, note that $\tilde{x}^* \leq -\tilde{b}^*$. Also, we have $\tilde{x}^* \in P(h_{(S,T)})$. Together, this means that $\tilde{x}^* \in P((h_{(S,T)})^{-\tilde{b}})$. Furthermore, we have that
\begin{align*}
\tilde{x}^*(N) &
\leq h^{-\tilde{b}}_{(S,T)} (N)
= \min (h_{(S,T)} (Z) - \tilde{b}(N \backslash Z) \ | \ Z \subseteq N) \\
& \leq h_{(S,T)} (\tilde{X} \cup \tilde{Y}) - \tilde{b}(N \backslash (\tilde{X} \cup \tilde{Y}))
= h(\tilde{X},\tilde{Y}) - b(N \backslash (\tilde{X} \cup \tilde{Y})) \\
&= x^*(\tilde{X}) - x^*(\tilde{Y}) + x^*(N \backslash (\tilde{X} \cup \tilde{Y}))
= \tilde{x}^*(N).
\end{align*}
Thus, $\tilde{x}^*(N) = h^{-\tilde{b}}_{(S,T)} (N)$ and we have $\tilde{x}^* \in B((h_{(S,T)})^{-\tilde{b}})$.

For the second part, note that for all $\tilde{x} \in  B((h_{(S,T)})^{-\tilde{b}})$ we have $\tilde{x} \in P(h_{(S,T)})$, $\tilde{x}(\tilde{X} \cup \tilde{Y}) = h_{(S,T)}(\tilde{X} \cup \tilde{Y})$ and $\tilde{x}_i = -\tilde{b}_i$ for all $i \not\in \tilde{X} \cup \tilde{Y}$. Thus, $x \in P_{(S,T)}(h)$, $x(\tilde{X}) - x(\tilde{Y}) = h(\tilde{X},\tilde{Y})$ and $x_i = -b_i$ for all $i \not\in \tilde{X} \cup \tilde{Y}$. We show that these three properties of $x$ imply $x \in \tilde{B}(h)$\footnote{Note that if $(\tilde{X},\tilde{Y})$ is an orthant (meaning $(S,T) = (\tilde{X},\tilde{Y})$), then $x \in B_{(S,T)}(h)$. Thus, we already know that $x \in \tilde{B}(h)$ since $B_{(S,T)}(h) \subseteq \tilde{B}(h)$. However, it is not guaranteed that $(\tilde{X},\tilde{Y})$ is an orthant. Hence an extended proof is necessary.}.    For this, we choose an arbitrary $(X,Y) \in 3^N$ and define $X_1 := X \cap (\tilde{X} \cup \tilde{Y})$, $X_2 := X \backslash (\tilde{X} \cup \tilde{Y})$, $Y_1 := Y \cap (\tilde{X} \cup \tilde{Y})$, and $Y_2 := Y \backslash (\tilde{X} \cup \tilde{Y})$ (note that all sets $X_1,X_2,Y_1,Y_2$ are disjoint). We prove that $x(X) - x(Y) \leq h(X,Y)$. First, if $(X_1,Y_1) \sqsubseteq (\tilde{X},\tilde{Y})$, then
\begin{subequations}
\label{eq_bi_even_01}
\begingroup \allowdisplaybreaks
\begin{align}
x(X) - x(Y) &= x(X_1) - x(Y_1) - b(X_2) + b(Y_2) \label{eq_bi_even_01_01}\\
&= x(X_1) - x(Y_1) + x^*(\tilde{X} \cup X_2) - x^*(\tilde{X})
- x^*(\tilde{Y} \cup Y_2) + x^*(\tilde{Y}) \label{eq_bi_even_01_02} \\
& \leq h(X_1,Y_1) + h(\tilde{X} \cup X_2,\tilde{Y} \cup Y_2) - h(\tilde{X},\tilde{Y}) \label{eq_bi_even_01_03} \\
&\leq h(X,Y), \label{eq_bi_even_01_04}
\end{align}
\endgroup
\end{subequations}
where~(\ref{eq_bi_even_01_01}) follows since $X_2$, $Y_2$, and $\tilde{X} \cup \tilde{Y}$ are disjoint and $x_i = -b_i$ for all $i \not\in \tilde{X} \cup \tilde{Y}$, (\ref{eq_bi_even_01_02}) since additionally    $(A_+ , A_-) \sqsubseteq (\tilde{X},\tilde{Y})$    and thus $X_2,Y_2 \subseteq A_0$, (\ref{eq_bi_even_01_03}) since $x \in P_{(\tilde{S} , \tilde{T})}(h)$,    $(X_1,Y_1) \sqsubseteq (\tilde{X},\tilde{Y}) (h)$,    $x^* \in \tilde{B}(h)$, and $x(\tilde{X}) - x(\tilde{Y})  = h(\tilde{X},\tilde{Y})$, and~(\ref{eq_bi_even_01_04}) by bisubmodularity of $h$. Second, if $(X_1,Y_1) \not\sqsubseteq (\tilde{X},\tilde{Y})$, we have that
\begin{align}
(X \cup \tilde{X}) \backslash (Y \cup \tilde{Y}) 
= (\tilde{X} \backslash Y_1) \cup X_2 \label{eq_bi_even_help_01}
\end{align}
and, analogously, $(Y \cup \tilde{Y}) \backslash (X \cup \tilde{X}) = (\tilde{Y} \backslash X_1) \cup Y_2$ (see Figure~\ref{fig_lemma_even_bi}). Moreover, since $((\tilde{X} \backslash Y_1) \cup X_2 ) \cap (\tilde{X} \cup \tilde{Y}) = \tilde{X} \backslash Y \subseteq \tilde{X}$ and $((\tilde{Y} \backslash X_1) \cup Y_2) \cap (\tilde{X} \cup \tilde{Y}) = \tilde{Y} \backslash X  \subseteq \tilde{Y}$, it follows from~(\ref{eq_bi_even_01}) that 
\begin{align}
& \quad \ x((X \cup \tilde{X}) \backslash (Y \cup \tilde{Y})) - x((Y \cup \tilde{Y}) \backslash (X \cup \tilde{X})) \nonumber \\
&\leq h((X \cup \tilde{X}) \backslash (Y \cup \tilde{Y}) ,(Y \cup \tilde{Y}) \backslash (X \cup \tilde{X}))  \label{eq_bi_even_help_02}
\end{align}
and also $x(X \cap \tilde{X}) - x(Y \cap \tilde{Y}) \leq h(X \cap \tilde{X},Y \cap \tilde{Y})$. We are now ready to derive the following:
\begin{subequations}
\begin{align}
& \quad \ x(X) - x(Y) \\
& = x(X_1 \cap \tilde{X}) + x(X_1 \cap \tilde{Y}) - x(Y_1 \cap \tilde{X}) - x(Y_1 \cap \tilde{Y}) + x(X_2) - x(Y_2) \label{eq_bi_even_02_02} \\
& = x(X \cap \tilde{X}) + x(X_1 \cap \tilde{Y}) - x(Y_1 \cap \tilde{X}) - x(Y \cap \tilde{Y}) + x(X_2) - x(Y_2) \label{eq_bi_even_02_03} \\
&= x(X \cap \tilde{X})  - x(Y \cap \tilde{Y}) - h(\tilde{X},\tilde{Y}) \nonumber \\
& \quad + x(\tilde{X} \backslash Y_1) - x(\tilde{Y} \backslash X_1)  + x(X_2) - x(Y_2) \label{eq_bi_even_02_04}  \\
&= x(X \cap \tilde{X})  - x(Y \cap \tilde{Y}) - h(\tilde{X},\tilde{Y}) \nonumber \\
& \quad + x((X \cup \tilde{X}) \backslash (Y \cup \tilde{Y})) - x((Y \cup \tilde{Y}) \backslash (X \cup \tilde{X})) \label{eq_bi_even_02_05}  \\
&\leq h(X \cap \tilde{x}, Y \cap \tilde{Y}) - h(\tilde{X},\tilde{Y}) + h((X \cup \tilde{X}) \backslash (Y \cup \tilde{Y}),(Y \cup \tilde{Y}) \backslash (X \cup \tilde{X})) \label{eq_bi_even_02_06}  \\
& \leq h(X,Y), \label{eq_bi_even_02_07}
\end{align}
\end{subequations}
where~(\ref{eq_bi_even_02_02}) follows since $X_1,X_2 \subseteq \tilde{X} \cup \tilde{Y}$, (\ref{eq_bi_even_02_03}) since $X_2 \cap \tilde{X} = Y_2 \cap \tilde{Y} = \emptyset$, (\ref{eq_bi_even_02_04}) since $x(\tilde{X}) - x(\tilde{Y}) = h(\tilde{X},\tilde{Y})$, (\ref{eq_bi_even_02_05}) from (\ref{eq_bi_even_help_01}), (\ref{eq_bi_even_02_06}) from (\ref{eq_bi_even_help_02}), and (\ref{eq_bi_even_02_07}) by bisubmodularity of $h$. Thus, we have $x(X) - x(Y) \leq h(X,Y)$ and, since $(X,Y)$ was chosen arbitrarily from $3^N$, it follows that $x \in \tilde{B}(h)$.
\end{proof}

\begin{figure}[t!]
\centering
\subfloat[]{\includegraphics{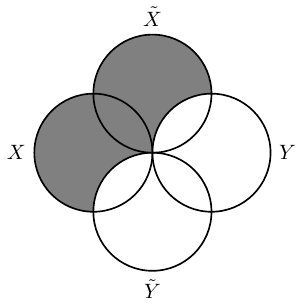}}
\hspace{.5cm}
\subfloat[]{\includegraphics{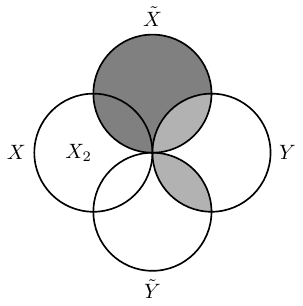}}
\caption{Venn-diagrams of $X$, $Y$, $\tilde{X}$, and $\tilde{Y}$ in the    first    part of the proof of Lemma~\ref{lemma_even_bi}: (a) $(X \cup \tilde{X}) \backslash (Y \cup \tilde{Y})$; (b) $Y_1$ (light gray), $\tilde{X} \backslash Y_1$ (dark gray), and $X_2$.}
\label{fig_lemma_even_bi}
\end{figure}

\section{From $(a,b)$-majorization to (bi)sumbodularity}
\label{sec_from_maj_to_sub}

In the previous section, we showed the existence of least majorized elements for several sets including submodular, supermodular, base, and bisubmodular polyhedra. In this section, we show that the existence of such elements is limited to these sets within the class of compact convex or bounded hole-free sets. Together with the existence results of the previous section, this yields characterizations of the existence of least majorized elements in terms of submodular, supermodular, base, and bisubmodular polyhedra. We characterize the existence of least weakly $(a,b)$-super- and $(a,b)$-submajorized, least $(a,b)$-majorized, and least weakly absolutely $(a,b)$-majorized elements in Sections~\ref{sec_char_super}-\ref{sec_char_bi}, respectively.

\subsection{Least weakly $(a,b)$-supermajorized elements and submodular polyhedra}
\label{sec_char_super}

To characterize the existence of least weakly $(a,b)$-supermajorized elements, we first prove two intermediate results. First, in Lemma~\ref{lemma_red_to_sub_1}, we prove that for any closed and bounded subset $C \subset \mathbb{R}^n$ the existence of a least $(1,b)$-majorized element for all $b \in \mathbb{R}^n$ implies the existence of vectors whose nested sums are maximal among vectors in $C$:
\begin{lemma}
Let $C \subset \mathbb{R}^n$ be a closed and bounded set. Suppose that $C$ has a least weakly $(1,b)$-supermajorized element for each $b \in \mathbb{R}^n$. Then for each permutation $\pi$ of $N$, there exists a vector $x^* \in C$ such that for all $x \in C$ we have $\sum_{i=1}^k x_{\pi(i)}^* \geq \sum_{i=1}^k x_{\pi(i)}$ for all $k \in N$.
\label{lemma_red_to_sub_1}
\end{lemma}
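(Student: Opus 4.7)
The plan is to exhibit a single shifting vector $b$ (depending only on $\pi$ and on the diameter of $C$) and apply the max-function characterization of weak $(1,b)$-supermajorization from Lemma~\ref{lemma_leastab}(2) once, extracting all $n$ inequalities $\sum_{i=1}^k x^*_{\pi(i)} \geq \sum_{i=1}^k x_{\pi(i)}$ by varying the threshold $\alpha$ in that characterization. The compactness of $C$ gives a uniform bound $B := \sup_{x \in C} \max_{i \in N} |x_i| < \infty$, which I will use to keep certain quantities either strictly positive or strictly negative.

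Concretely, I would choose $M > 2B$ and set $b_{\pi(i)} := M i$ for $i \in N$, so that $b_i = M \pi^{-1}(i)$. By assumption $C$ has a least weakly $(1,b)$-supermajorized element $x^*$. For each $k \in N$ I would then set $\alpha_k := M(k + \tfrac{1}{2})$ and compute, for arbitrary $y \in \mathbb{R}^n$ bounded by $B$ in $\|\cdot\|_\infty$,
\begin{equation*}
\alpha_k - y_{\pi(j)} - b_{\pi(j)} = M\bigl(k + \tfrac{1}{2} - j\bigr) - y_{\pi(j)},
\end{equation*}
which is $\geq M/2 - B > 0$ when $j \leq k$ and $\leq -M/2 + B < 0$ when $j \geq k+1$. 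Consequently,
\begin{equation*}
\sum_{i \in N} \max\bigl(0, \alpha_k - y_i - b_i\bigr) = \sum_{j=1}^k \Bigl( M\bigl(k + \tfrac{1}{2} - j\bigr) - y_{\pi(j)} \Bigr) = C_k - \sum_{j=1}^k y_{\pi(j)}
\end{equation*}
for a constant $C_k$ depending only on $M$ and $k$. Applying this identity to both $y = x^*$ and $y \in C$, and invoking Lemma~\ref{lemma_leastab}(2), I obtain $C_k - \sum_{j=1}^k x^*_{\pi(j)} \leq C_k - \sum_{j=1}^k x_{\pi(j)}$, which rearranges to the desired inequality $\sum_{j=1}^k x^*_{\pi(j)} \geq \sum_{j=1}^k x_{\pi(j)}$ for every $x \in C$ and every $k \in N$.

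The main conceptual step is realizing that a \emph{single} pair $(1,b)$ simultaneously encodes every partial sum along $\pi$: the geometric spacing $b_{\pi(i)} = Mi$ ensures that the $k$ indices with the smallest values of $b$ are precisely $\pi(1),\ldots,\pi(k)$, and by sweeping the parameter $\alpha$ in the max-function characterization one can ``peel off'' the partial-sum constraint for each $k$ individually. The only mild subtlety is ensuring that $M$ is chosen large enough that the signs of $\alpha_k - y_i - b_i$ are determined purely by the index comparison $j \leq k$ vs.\ $j \geq k+1$, uniformly over all $y \in C$; this is where closedness and boundedness of $C$ enter, via the finite bound $B$.
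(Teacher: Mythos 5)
Your proof is correct and follows essentially the same strategy as the paper's: choose a single shift vector $b$ that spaces the shifted coordinate ranges into disjoint, increasing bands along the order given by $\pi$, then for each $k$ place the kink of the hinge function $\max(0,\alpha_k-\zeta)$ between the $k$-th and $(k+1)$-th band so that the sum telescopes to a constant minus the $k$-th partial sum. The only (immaterial) difference is that you use uniform spacing $b_{\pi(i)}=Mi$ with a global bound $B$, whereas the paper builds $\hat b$ recursively from the per-coordinate bounds $l_i,u_i$.
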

\begin{proof}
For each $i \in N$, we define $l_i$ and $u_i$ as the minimum and maximum value of a variable $x_i$ in $C$, i.e., $l_i := \min_{x \in C} x_i$ and $u_i := \max_{x \in C} x_i$. Given a permutation $\pi$ of $N$, we recursively define the following vector:
\begin{align*}
\hat{b}_{\pi(1)} &:= l_{\pi(1)}; &
\hat{b}_{\pi(i+1)} &:= \hat{b}_{\pi(i)} + u_{\pi(i)} - l_{\pi(i+1)} + 1, \quad i \in \lbrace 1 ,\ldots,n - 1 \rbrace.
\end{align*}
Moreover, we define for each $k \in N$ the convex function $\hat{\phi}^k(\zeta) := \max(0,\hat{b}_{\pi(k)} + u_{\pi(k)} - \zeta)$. Note that $\hat{b}_{\pi(i+1)} + u_{\pi(i+1)} \geq \hat{b}_{\pi(i)} + u_{\pi(i)} + 1$ for all    $i \in \lbrace 1,\ldots, n-1 \rbrace$    and thus that, for each $k \in N$,
\begin{equation*}
\sum_{i \in N} \hat{\phi}^k(x_i + \hat{b}_i)
=
-\sum_{i=1}^k (\hat{b}_{\pi(i)} + x_{\pi(i)}) + k(\hat{b}_{\pi(k)} + u_{\pi(k)}).
\end{equation*}
It follows from Lemma~\ref{lemma_leastab} that any least weakly $(1,\hat{b})$-supermajorized element $x^* \in C$ minimizes $\sum_{i \in N} \hat{\phi}^k(x_i + \hat{b}_i)$ over $C$ for all $k \in N$. Thus, for all $x \in C$ and $k \in N$ we have
\begin{align*}
-\sum_{i=1}^k (x_{\pi(i)}^*+\hat{b}_{\pi(i)}) + k(\hat{b}_{\pi(k)} + u_{\pi(k)})
&=
\sum_{i \in N} \hat{\phi}^k(x_i^* + \hat{b}_i)
\leq
\sum_{i \in N} \hat{\phi}^k(x_i + \hat{b}_i) \\
&=
-\sum_{i=1}^k ( x_{\pi(i)} +\hat{b}_{\pi(i)})  + k(\hat{b}_{\pi(k)} + u_{\pi(k)}),
\end{align*}
which implies that $\sum_{i=1}^k x_{\pi(i)}^* \geq \sum_{i=1}^k x_{\pi(i)}$.
\end{proof}

\begin{remark}
\label{remark_diff}
Note, that the functions $\hat{\phi}^k$ as defined in the proof of this lemma are not (continuously) differentiable. Alternatively, we can prove the result of the lemma starting from the assumption that for any $b \in \mathbb{R}^n$ there exists a vector in $C$ that is an optimal solution to $\min_{x \in C} \sum_{i \in N} \phi (x_i + b_i)$ for any \emph{continuously differentiable} convex function $\phi$. The corresponding proof is equal to that of Lemma~\ref{lemma_red_to_sub_1}, except that for each $k \in N$ we choose $\hat{\phi}^k$ as the following continuously differentiable and convex function:
\begin{equation*}
\hat{\phi}^k(\zeta) = \begin{cases}
\hat{b}_{\pi(k)} - u_{\pi(k)} - \zeta + \frac{1}{2} & \text{if } \zeta \leq \hat{b}_{\pi(k)} - u_{\pi(k)}; \\
\begin{array}{l} -\frac{1}{2} (\hat{b}_{\pi(k)} - u_{\pi(k)} + 1 - \zeta)^4 \\ \quad + (\hat{b}_{\pi(k)} - u_{\pi(k)} + 1 - \zeta)^3 \end{array} & \begin{array}{l} \text{if } \hat{b}_{\pi(k)} - u_{\pi(k)} \leq \zeta \\ \quad \leq \tilde{b}_{\pi(k)} - u_{\pi(k)} + 1; \end{array} \\
0 & \text{if } \zeta \geq \tilde{b}_{\pi(k)} - u_{\pi(k)} + 1.
\end{cases}
\end{equation*}
We come back to this when proving our characterization results for least $(a,b)$-majorized elements in Theorems~\ref{th_base} and~\ref{th_base_int}.
\end{remark}

In a second step, we prove in Lemma~\ref{lemma_red_to_sub_2} that the convex hull of any closed and bounded set satisfying the result of Lemma~\ref{lemma_red_to_sub_1} is contained    in    a submodular polyhedron and contains the corresponding base polyhedron. The proof of this lemma is inspired by the proof of Theorem~1 in \cite{Nakamura1988}, where the optimality of Edmonds' classical greedy algorithm for linear optimization \cite{Edmonds2003} is characterized in terms of submodular polyhedra.
\begin{lemma}
Let $C \subset \mathbb{R}^n$ be a closed and bounded set. If for each permutation $\pi$ of $N$ there exists a vector $x^* \in C$ such that for all $x \in C$ we have $\sum_{i=1}^k x_{\pi(i)}^* \geq \sum_{i=1}^k x_{\pi(i)}$ for all $k \in N$, then the set function $f(A) := \max_{x \in C} \ x(A)$ is submodular and we have $B(f) \subseteq \text{co}(C) \subseteq P(f)$.
\label{lemma_red_to_sub_2}
\end{lemma}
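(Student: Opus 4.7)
The plan is to verify the three conclusions separately: the easy inclusion $\text{co}(C) \subseteq P(f)$, the submodularity of $f$, and the more substantive inclusion $B(f) \subseteq \text{co}(C)$. Note that boundedness of $C$ makes $f$ finite, and by the same standard argument used after Lemma~\ref{lemma_sub}, $B(f)$ is bounded and thus a polytope equal to the convex hull of its extreme points.

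The inclusion $\text{co}(C) \subseteq P(f)$ is immediate: for every $x \in C$ and every $A \in 2^N$, $x(A) \leq \max_{y \in C} y(A) = f(A)$, so $C \subseteq P(f)$, and convexity of $P(f)$ extends this to the convex hull.

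For submodularity, fix $A,B \in 2^N$ and pick any permutation $\pi$ of $N$ that lists the elements of $A \cap B$ first (positions $1,\ldots,|A\cap B|$), then the elements of $A \setminus B$ (through position $|A|$), then those of $B \setminus A$ (through position $|A \cup B|$), and finally the remaining elements. By hypothesis there exists $x^* \in C$ with $\sum_{i=1}^k x^*_{\pi(i)} \geq \sum_{i=1}^k x_{\pi(i)}$ for every $k \in N$ and every $x \in C$. Specializing $k$ to $|A \cap B|$, $|A|$, and $|A \cup B|$ yields $x^*(A \cap B) = f(A \cap B)$, $x^*(A) = f(A)$, and $x^*(A \cup B) = f(A \cup B)$. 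Combining with the trivial bound $x^*(B) \leq f(B)$ and the identity $x^*(A) + x^*(B) = x^*(A \cap B) + x^*(A \cup B)$, we obtain
\begin{equation*}
f(A) + f(B) \geq x^*(A) + x^*(B) = x^*(A \cap B) + x^*(A \cup B) = f(A \cap B) + f(A \cup B),
\end{equation*}
so $f$ is submodular.

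Finally, for $B(f) \subseteq \text{co}(C)$, let $z$ be an arbitrary extreme point of $B(f)$. By Lemma~\ref{lemma_extreme}, there is a permutation $\pi$ with $z_{\pi(k)} = f(\{\pi(1),\ldots,\pi(k)\}) - f(\{\pi(1),\ldots,\pi(k-1)\})$ for all $k$. Applying the hypothesis to this $\pi$ produces an $x^* \in C$ with $\sum_{i=1}^k x^*_{\pi(i)} \geq \sum_{i=1}^k x_{\pi(i)}$ for all $x \in C$ and all $k$; in particular $\sum_{i=1}^k x^*_{\pi(i)} = f(\{\pi(1),\ldots,\pi(k)\})$ for every $k$. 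Taking successive differences gives $x^*_{\pi(k)} = z_{\pi(k)}$, i.e., $x^* = z$, so $z \in C$. Since $B(f)$ is the convex hull of its extreme points, $B(f) \subseteq \text{co}(C)$. The main obstacle in the proof is organizing this third step cleanly: the key insight is recognizing that the hypothesis is exactly the condition that forces each greedy-type extreme point of $B(f)$ to already lie in $C$, mirroring Edmonds' greedy characterization cited before the lemma.
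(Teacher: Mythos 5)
Your proof is correct and follows essentially the same route as the paper's: the trivial inclusion $\text{co}(C) \subseteq P(f)$ via convexity, submodularity via a permutation listing $A \cap B$ first and then the rest of $A \cup B$ together with the modular identity $x^*(A)+x^*(B)=x^*(A\cap B)+x^*(A\cup B)$, and $B(f) \subseteq \text{co}(C)$ by showing each greedy extreme point of $B(f)$ (Lemma~\ref{lemma_extreme}) coincides with the nested-sum-maximizing vector $x^* \in C$. Your direct phrasing of the last step (every extreme point of the bounded polyhedron $B(f)$ lies in $C$, hence its convex hull lies in $\text{co}(C)$) is a slightly cleaner packaging of the paper's argument by contradiction, but the substance is identical.
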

\begin{proof}
We first prove that the set function $f$ as defined in the lemma is submodular. Given two sets $A,B \in 2^N$, we define $X := A \cap B$, $Y := A \backslash B$, and $Z := B \backslash A$. Let $\pi$ be a permutation whose first $|X|$ elements are indices in $X$    and whose next $|Y \cup Z|$ elements are indices in $Y \cup Z$.    By assumption, there exists a solution $x^*$ such that $x^*(X) \geq x(X)$ and $x^*(X \cup Y \cup Z) \geq x(X \cup Y \cup Z)$ for all $x \in C$. It follows that
\begin{align*}
f(A) + f(B)  &=
f(X \cup Y) + f(X \cup Z) =
\max_{x \in C} x(X \cup Y)
+
\max_{x \in C} x(X \cup Z)  \\
& \geq
x^*(X \cup Y) + x^*(X \cup Z) 
= x^*(X) + x^*(X \cup Y \cup Z) \\
&= \max_{x \in C} x(X)
+
\max_{x \in C} x(X \cup Y \cup Z) 
= f(X) + f(X \cup Y \cup Z) \\
&= f(A \cap B) + f(A \cup B),
\end{align*}
which implies that $f$ is submodular.

Finally, we prove that $B(f) \subseteq \text{co}(C) \subseteq P(f)$. To prove that $\text{co}(C) \subseteq P(f)$, note that each vector $y \in \text{co}(C)$ can be written as a convex combination of vectors in $C$, i.e., $y = \sum_{j=1}^m \lambda^j x^j$ for some positive values $\lambda^1,\ldots, \lambda^m \in \mathbb{R}_{>0}$ with $\sum_{j=1}^m \lambda^j = 1$ and vectors $x^j \in C$. It follows that
\begin{equation*}
    y(A) = \sum_{j=1}^m \lambda^j x^j(A)
    \leq \max_{x \in \lbrace x^1,\ldots,x^m \rbrace} x(A)
    \leq \max_{x \in C} x(A) = f(A), \quad A \in 2^N.
\end{equation*}
Thus, $\text{co}(C) \subseteq P(f)$. To prove that $\text{co}(C) \supseteq B(f)$, suppose that there exists a vector $z \in B(f)$ that is not in $\text{co}(C)$. Since $\text{co}(C)$ is compact and convex, we may assume without loss of generality that $z$ is an extreme point of $B(f)$. It follows from Lemma~\ref{lemma_extreme} that there exists a permutation $\pi$ of $N$ such that $z_{\pi(1)} = f(\lbrace \pi(1) \rbrace)$ and, for each $k \geq 1$, we have $
z_{\pi(k+1)} =
f(\lbrace \pi(1),\ldots,\pi(k+1) \rbrace) 
- f(\lbrace \pi(1),\ldots,\pi(k) \rbrace)$. By assumption, there exists a vector $y^* \in C$ such that for all $x \in C$ we have $\sum_{i=1}^k y^*_{\pi(i)} \geq \sum_{i=1}^k x_{\pi(i)}$ for all $k \in N$. It follows that
\begin{equation*}
z_{\pi(1)} = f(\lbrace \pi(1) \rbrace) = \max_{x \in C} x_{\pi(1)} = y_{\pi(1)}^*
\end{equation*}
and for each $k \geq 1$, we have
\begin{align*}
z_{\pi(k+1)} &=
f(\lbrace \pi(1),\ldots,\pi(k+1) \rbrace) 
- f(\lbrace \pi(1),\ldots,\pi(k) \rbrace)  \\
&= \max_{x \in C} \ \sum_{i=1}^{k+1} x_{\pi(i)}
- \max_{x \in C} \ \sum_{i=1}^{k} x_{\pi(i)} 
= \sum_{i=1}^{k+1} y_{\pi(i)}^* -  \sum_{i=1}^{k} y_{\pi(i)}^* 
= y_{\pi(k+1)}^*.
\end{align*}
This implies that $z = y^*$ and thus that $z \in \text{co}(C)$. This is a contradiction, which means that we have $\text{co}(C) \supseteq B(f)$.
\end{proof}

When the set $C$ in Lemma~\ref{lemma_red_to_sub_2} is convex, it follows that $C$ itself is contained in a submodular polyhedron and contains the corresponding base polyhedron since $\text{co}(C) = C$. Furthermore,    we show in Corollary~\ref{cor_red_to_sub_Z} that    when $C$ is an integral hole-free set, we can adjust the proof of the lemma slightly so that we may conclude that $C$ is contained in an integral submodular polyhedron and contains the corresponding integral base polyhedron:
\begin{corollary}
Let $C \subseteq \mathbb{Z}^n$ be a bounded integral hole-free set. If for each permutation $\pi$ of $N$ there exists a vector $x^* \in C$ such that for all $x \in C$ we have $\sum_{i=1}^k x_{\pi(i)}^* \geq \sum_{i=1}^k x_{\pi(i)}$ for all $k \in N$, then the set function $f(A) := \max_{x \in C} \ x(A)$ is submodular and we have $B^{\mathbb{Z}}(f) \subseteq C \subseteq P^{\mathbb{Z}}(f)$.
\label{cor_red_to_sub_Z}
\end{corollary}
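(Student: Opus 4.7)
The plan is to derive this corollary directly from Lemma~\ref{lemma_red_to_sub_2}, using the hole-free property to replace $\text{co}(C)$ by $C$ on the relevant side of the inclusion. Since $C \subseteq \mathbb{Z}^n$ is bounded, it is automatically closed in $\mathbb{R}^n$ (it is a finite set), and the hypothesis on the nested sums is exactly the one of Lemma~\ref{lemma_red_to_sub_2}. So the lemma applies to $C$ and yields that $f(A) := \max_{x \in C} x(A)$ is submodular with $B(f) \subseteq \text{co}(C) \subseteq P(f)$.

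From this, I would derive the two integral inclusions separately. For the inclusion $C \subseteq P^{\mathbb{Z}}(f)$, note that by the very definition of $f$, every $x \in C$ satisfies $x(A) \leq f(A)$ for all $A \in 2^N$, so $C \subseteq P(f)$; combined with $C \subseteq \mathbb{Z}^n$ this gives $C \subseteq P(f) \cap \mathbb{Z}^n = P^{\mathbb{Z}}(f)$. (Here $f$ is integer-valued because $C$ is a finite set of integer points, so $P^{\mathbb{Z}}(f)$ is well defined as an integer submodular polyhedron.)

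For the inclusion $B^{\mathbb{Z}}(f) \subseteq C$, take any $z \in B^{\mathbb{Z}}(f) = B(f) \cap \mathbb{Z}^n$. By Lemma~\ref{lemma_red_to_sub_2} we have $z \in B(f) \subseteq \text{co}(C)$, and since $z \in \mathbb{Z}^n$, it follows that $z \in \text{co}(C) \cap \mathbb{Z}^n$. The hole-free assumption on $C$ gives $\text{co}(C) \cap \mathbb{Z}^n = C$, hence $z \in C$.

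The only step that required genuine thought in Lemma~\ref{lemma_red_to_sub_2} was showing $B(f) \subseteq \text{co}(C)$ via Lemma~\ref{lemma_extreme}, and that work is already done and reused unchanged here. The only new ingredient is the hole-free hypothesis, which acts precisely to bridge the gap between ``inside $\text{co}(C)$'' and ``inside $C$''. There is no real obstacle; the argument is essentially a one-line consequence of the lemma plus the definition of hole-freeness.
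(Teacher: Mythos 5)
Your proof is correct and follows essentially the same route as the paper's: both rest on Lemma~\ref{lemma_red_to_sub_2} (submodularity of $f$ and $B(f)\subseteq\text{co}(C)\subseteq P(f)$) and use hole-freeness to pass from $\text{co}(C)$ to $C$ after intersecting with $\mathbb{Z}^n$. If anything, invoking the lemma as a black box (noting that a bounded integral set is finite, hence closed and bounded) is a slightly cleaner packaging than the paper's ``re-run the proof with slight adjustments,'' but the mathematical content is identical.
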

\begin{proof}
The result follows from two slight adjustments of the proof of Lemma~\ref{lemma_red_to_sub_2}, where we now aim to prove that $B^{\mathbb{Z}}(f) \subseteq C \subseteq P^{\mathbb{Z}}(f)$. First, we have $C \subseteq P^{\mathbb{Z}}(f)$ since for each $y \in C$ and $A \in 2^N$ it holds that $y(A) \leq \max_{x \in C} x(A) = f(A)$. Seond, to prove that $C \supseteq B^{\mathbb{Z}}(f)$, we now suppose that there exists a vector $z \in B^{\mathbb{Z}}(f)$ that is not in $C$. Since $C$ is bounded and hole-free, we may assume without loss of generality that $z$ is an extreme point of $B(f)$. Following the remainder of the proof, we may conclude that $z \in C$.
\end{proof}

We are now ready to prove our two characterizations of the existence of least weakly $(a,b)$-supermajorized elements:
\begin{theorem}
Let $C \subset \mathbb{R}^n$ be a compact convex set. The following statements are equivalent:
\begin{enumerate}
    \item $C$ has a least weakly $(a,b)$-supermajorized element for each $a \in \mathbb{R}^n_{>0}$ and $b \in \mathbb{R}^n$;
    \item $C$ has a least weakly $(1,b)$-supermajorized element for each $b \in \mathbb{R}^n$;
\item For each $b \in \mathbb{R}^n$, there exists $x^* \in C$ that is an optimal solution to $\min_{x \in C} \Phi(x + b)$ for any choice of non-increasing    continuous    Schur-convex function $\Phi$; 
\item For each permutation $\pi$ of $N$, there exists $x^* \in C$ such that for all $x \in C$ we have $\sum_{i=1}^k x^*_{\pi(i)} \geq \sum_{i=1}^k x_{\pi(i)}$ for all $k \in N$;
    \item The function $f(A) := \max_{x \in C} x(A)$ is submodular and $B(f) \subseteq C \subseteq P(f)$;
\end{enumerate}
\label{th_super}
\end{theorem}
\begin{proof}
(2) is a special case of (1); (2) and (3) are equivalent due to Lemma~\ref{lemma_maj_sup}; (2) implies (4) via Lemma~\ref{lemma_red_to_sub_1}; (4) implies (5) via Lemma~\ref{lemma_red_to_sub_2} since $\text{co}(C) = C$; (5) implies (1) via    Lemma~\ref{lemma_reduction} and    Corollary~\ref{lemma_BCP}.
\end{proof}

\begin{theorem}
Let $C \subset \mathbb{Z}^n$ be a bounded integral hole-free set. The following statements are equivalent:
\begin{enumerate}
    \item $C$ has a least weakly $(1,b)$-supermajorized element for each $b \in \mathbb{Z}^n$;
\item For each $b \in \mathbb{Z}^n$, there exists $x^* \in C$ that is an optimal solution to $\min_{x \in C} \Phi(x + b)$ for any choice of non-increasing    continuous    Schur-convex function $\Phi$;
\item For each permutation $\pi$ of $N$, there exists $x^* \in C$ such that for all $x \in C$ we have $\sum_{i=1}^k x^*_{\pi(i)} \geq \sum_{i=1}^k x_{\pi(i)}$ for all $k \in N$;
    \item The function $f(A) := \max_{x \in C} x(A)$ is integral and submodular and $B^{\mathbb{Z}}(f) \subseteq C \subseteq P^{\mathbb{Z}}(f)$.
\end{enumerate}
\label{th_super_int}
\end{theorem}
\begin{proof}
(1) and (2) are equivalent due to Lemma~\ref{lemma_maj_sup}; (1) implies (3) via Lemma~\ref{lemma_red_to_sub_1}; (3) implies (4) via Corollary~\ref{cor_red_to_sub_Z}; (4) implies (1) via Corollary~\ref{lemma_BCP}.
\end{proof}

\subsection{Least weakly $(a,b)$-submajorized elements and supermodular polyhedra}

A characterization analogous to Theorems~\ref{th_super} and~\ref{th_super_int} can be proven for the existence of least weakly $(a,b)$-submajorized elements. This proof relies on the following intermediate results, whose proofs are analogous to those of Lemmas~\ref{lemma_red_to_sub_1} and~\ref{lemma_red_to_sub_2} and Corollary~\ref{cor_red_to_sub_Z}, respectively:
\begin{lemma}
Let $C \subset \mathbb{R}^n$ be a closed and bounded set. Suppose that $C$ has a least weakly $(1,b)$-submajorized element for each $b \in \mathbb{R}^n$. Then for each permutation $\pi$ of $N$, there exists a vector $x^* \in C$ such that for all $x \in C$ we have $\sum_{i=1}^k x_{\pi(i)}^* \leq \sum_{i=1}^k x_{\pi(i)}$ for all $k \in N$.
\label{lemma_red_to_sub_1_02}
\end{lemma}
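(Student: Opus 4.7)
The plan is to mirror the proof of Lemma~\ref{lemma_red_to_sub_1} verbatim, but with non-decreasing convex test functions in place of non-increasing ones and with the shift vector $\hat{b}$ arranged so that partial \emph{minima} of $\sum_{i=1}^k x_{\pi(i)}$ are extracted rather than maxima. Concretely, I will again set $l_i := \min_{x \in C} x_i$ and $u_i := \max_{x \in C} x_i$ (which exist because $C$ is closed and bounded), and for a given permutation $\pi$ I will define $\hat{b}$ recursively so that the quantities $\hat{b}_{\pi(i)} + l_{\pi(i)}$ are \emph{strictly decreasing} with a gap of at least $1$ between successive values and moreover $\hat{b}_{\pi(i+1)} + u_{\pi(i+1)} \le \hat{b}_{\pi(i)} + l_{\pi(i)} - 1$. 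A convenient explicit choice is
\begin{equation*}
\hat{b}_{\pi(1)} := -u_{\pi(1)}, \qquad \hat{b}_{\pi(i+1)} := \hat{b}_{\pi(i)} + l_{\pi(i)} - u_{\pi(i+1)} - 1, \quad i \in \{1,\ldots,n-1\}.
\end{equation*}

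Then, for each $k \in N$, I will introduce the non-decreasing continuous convex function
\begin{equation*}
\hat{\phi}^k(\zeta) := \max\bigl(0,\ \zeta - \hat{b}_{\pi(k)} - l_{\pi(k)}\bigr).
\end{equation*}
Using the recursive definition of $\hat{b}$, for any $x \in C$ and any $i \leq k$ one has $x_{\pi(i)} + \hat{b}_{\pi(i)} \geq l_{\pi(i)} + \hat{b}_{\pi(i)} \geq l_{\pi(k)} + \hat{b}_{\pi(k)} + (k-i) \geq l_{\pi(k)} + \hat{b}_{\pi(k)}$, so the first max-branch is active and $\hat{\phi}^k(x_{\pi(i)} + \hat{b}_{\pi(i)}) = x_{\pi(i)} + \hat{b}_{\pi(i)} - l_{\pi(k)} - \hat{b}_{\pi(k)}$, while for $i > k$ one has $x_{\pi(i)} + \hat{b}_{\pi(i)} \leq u_{\pi(i)} + \hat{b}_{\pi(i)} \leq l_{\pi(k)} + \hat{b}_{\pi(k)} - (i-k) \leq l_{\pi(k)} + \hat{b}_{\pi(k)}$, so $\hat{\phi}^k$ vanishes. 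Summing yields
\begin{equation*}
\sum_{i \in N} \hat{\phi}^k(x_i + \hat{b}_i) = \sum_{i=1}^k x_{\pi(i)} + \sum_{i=1}^k \hat{b}_{\pi(i)} - k(l_{\pi(k)} + \hat{b}_{\pi(k)}),
\end{equation*}
where only the first summand depends on $x$.

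Finally, by hypothesis there exists a least weakly $(1,\hat{b})$-submajorized element $x^* \in C$, and by Lemma~\ref{lemma_leastab}(1) and the definition of weak $(1,\hat b)$-submajorization $x^*$ simultaneously minimizes $\sum_{i \in N} \phi(x_i + \hat{b}_i)$ over $C$ for every non-decreasing continuous convex $\phi$, hence in particular for each $\hat{\phi}^k$. Comparing the displayed identity at $x^*$ and at an arbitrary $x \in C$ and cancelling the $x$-independent constants gives $\sum_{i=1}^k x^*_{\pi(i)} \leq \sum_{i=1}^k x_{\pi(i)}$ for every $k \in N$, as required.

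The main conceptual step, already present in the proof of Lemma~\ref{lemma_red_to_sub_1}, is the combinatorial engineering of $\hat b$ so that a single family of simple piecewise-linear max-functions exactly isolates each partial sum under $\pi$; here the only obstacle is getting the orientation right, which is handled by inverting the monotonicity of the breakpoints relative to the supermajorization case. An analogue of Remark~\ref{remark_diff} applies: the same conclusion can be reached with continuously differentiable test functions by smoothing $\hat{\phi}^k$ near its single breakpoint without altering the piecewise evaluation above.
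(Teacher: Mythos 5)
Your proposal is correct and is exactly the adaptation the paper intends: the paper omits the proof of Lemma~\ref{lemma_red_to_sub_1_02}, stating only that it is ``analogous'' to that of Lemma~\ref{lemma_red_to_sub_1}, and you have carried out that analogy faithfully, with the breakpoints of $\hat{\phi}^k$ correctly reoriented (decreasing values of $\hat{b}_{\pi(i)}+l_{\pi(i)}$ with unit gaps and $\hat{b}_{\pi(i+1)}+u_{\pi(i+1)} \le \hat{b}_{\pi(i)}+l_{\pi(i)}-1$) so that the non-decreasing test functions isolate each partial sum. The verification of the two cases $i\le k$ and $i>k$ and the final comparison at $x^*$ are all sound.
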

\begin{lemma}
Let $C \subset \mathbb{R}^n$ be a closed and bounded set. If for each permutation $\pi$ of $N$ there exists a vector $x^* \in C$ such that for all $x \in C$ we have $\sum_{i=1}^k x_{\pi(i)}^* \leq \sum_{i=1}^k x_{\pi(i)}$ for all $k \in N$, then the set function $g(A) := \min_{x \in C} \ x(A)$ is supermodular and we have $B_{\text{sup}}(g) \subseteq \text{co}(C) \subseteq P_{\text{sup}}(g)$.
\label{lemma_red_to_sub_2_02}
\end{lemma}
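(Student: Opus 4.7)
My plan is to mirror the proof of Lemma~\ref{lemma_red_to_sub_2} with every inequality and every extremal choice reversed. I will split the argument into three parts: (i) establish supermodularity of $g$; (ii) show $\text{co}(C) \subseteq P_{\text{sup}}(g)$; and (iii) show $B_{\text{sup}}(g) \subseteq \text{co}(C)$.

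For part (i), given $A, B \in 2^N$, I will decompose via $X := A \cap B$, $Y := A \backslash B$, $Z := B \backslash A$, and pick a permutation $\pi$ whose first $|X|$ entries exhaust $X$. The hypothesis then produces some $x^* \in C$ attaining both $x^*(X) \leq x(X)$ and $x^*(X \cup Y \cup Z) \leq x(X \cup Y \cup Z)$ simultaneously for every $x \in C$, so $x^*(X) = g(X)$ and $x^*(X \cup Y \cup Z) = g(X \cup Y \cup Z)$. Combining these equalities with the identity $x^*(X \cup Y) + x^*(X \cup Z) = x^*(X) + x^*(X \cup Y \cup Z)$ and the definition of $g$ will yield $g(A) + g(B) \leq g(A \cap B) + g(A \cup B)$. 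Part (ii) is immediate: any $y \in \text{co}(C)$ is a convex combination $\sum_j \lambda^j x^j$ of vectors in $C$, so $y(A) = \sum_j \lambda^j x^j(A) \geq \min_{x \in C} x(A) = g(A)$, which places $y$ in $P_{\text{sup}}(g)$.

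Part (iii) is the delicate step. I will suppose for contradiction that some $z \in B_{\text{sup}}(g) \backslash \text{co}(C)$ exists, and since $\text{co}(C)$ is compact and convex I may assume $z$ is an extreme point of $B_{\text{sup}}(g)$. The key tool is the supermodular analogue of Lemma~\ref{lemma_extreme}: each extreme point of $B_{\text{sup}}(g)$ is of the form $z_{\pi(1)} = g(\lbrace \pi(1) \rbrace)$ and $z_{\pi(k)} = g(\lbrace \pi(1),\ldots,\pi(k) \rbrace) - g(\lbrace \pi(1),\ldots,\pi(k-1) \rbrace)$ for some permutation $\pi$ of $N$. Given such a $z$ and $\pi$, the hypothesis supplies $y^* \in C$ with $\sum_{i=1}^k y^*_{\pi(i)} \leq \sum_{i=1}^k x_{\pi(i)}$ for every $x \in C$ and every $k \in N$, so $\sum_{i=1}^k y^*_{\pi(i)} = g(\lbrace \pi(1),\ldots,\pi(k) \rbrace)$ for each $k$. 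Taking successive differences identifies $y^*_{\pi(k)}$ with $z_{\pi(k)}$ for every $k$, hence $z = y^* \in C \subseteq \text{co}(C)$, contradicting the choice of $z$.

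The only non-routine step is the supermodular extreme-point formula invoked in part (iii), which is not stated explicitly in the paper. I plan to derive it from Lemma~\ref{lemma_extreme} via the identity $B_{\text{sup}}(g) = B(g^{\#})$, applying Lemma~\ref{lemma_extreme} to the submodular dual $g^{\#}$ under the reverse permutation of $\pi$ and unwinding $g^{\#}(A) = g(N) - g(N \backslash A)$ to rewrite the resulting greedy increments in terms of $g$. Beyond this, every step is a direction-reversed transcription of the argument in Lemma~\ref{lemma_red_to_sub_2}, so no new ideas are required.
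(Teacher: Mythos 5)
Your proposal is correct and follows exactly the route the paper intends: the paper gives no separate proof of this lemma, stating only that it is analogous to Lemma~\ref{lemma_red_to_sub_2}, and your direction-reversed transcription is that analogy carried out faithfully. Your derivation of the supermodular extreme-point formula from Lemma~\ref{lemma_extreme} via $B_{\text{sup}}(g) = B(g^{\#})$ and the reversed permutation is also correct and fills in the one detail the paper leaves implicit.
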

\begin{corollary}
Let $C \subseteq \mathbb{Z}^n$ be a bounded integral hole-free set. If for each permutation $\pi$ of $N$ there exists a vector $x^* \in C$ such that for all $x \in C$ we have $\sum_{i=1}^k x_{\pi(i)}^* \leq \sum_{i=1}^k x_{\pi(i)}$ for all $k \in N$, then the set function $g(A) := \min_{x \in C} \ x(A)$ is supermodular and we have $B_{\text{sup}}^{\mathbb{Z}}(g) \subseteq C \subseteq P_{\text{sup}}^{\mathbb{Z}}(g)$.
\label{cor_red_to_sub_Z_02}
\end{corollary}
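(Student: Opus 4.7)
The plan is to mirror the proof of Corollary~\ref{cor_red_to_sub_Z}, adapting it from the submodular $\max$-setting to the supermodular $\min$-setting. The supermodularity of $g$ and the continuous inclusions $B_{\text{sup}}(g) \subseteq \text{co}(C) \subseteq P_{\text{sup}}(g)$ are already handed to us by Lemma~\ref{lemma_red_to_sub_2_02}, so the only new work is to (i) upgrade $g$ to an integer-valued function, and (ii) refine the two inclusions to their integral versions $B_{\text{sup}}^{\mathbb{Z}}(g) \subseteq C \subseteq P_{\text{sup}}^{\mathbb{Z}}(g)$.

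For integrality of $g$, since $C$ is bounded and integral, the minimum $\min_{x \in C} x(A)$ is attained at some integer vector in $C$, so $g(A) \in \mathbb{Z}$ for every $A \in 2^N$. For the forward inclusion $C \subseteq P_{\text{sup}}^{\mathbb{Z}}(g)$, note that any $y \in C$ satisfies $y(A) \geq \min_{x \in C} x(A) = g(A)$ for every $A \in 2^N$ by definition of $g$, hence $y \in P_{\text{sup}}(g)$; together with $y \in \mathbb{Z}^n$ this yields $y \in P_{\text{sup}}^{\mathbb{Z}}(g)$.

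The reverse inclusion $B_{\text{sup}}^{\mathbb{Z}}(g) \subseteq C$ is where the hole-freeness assumption on $C$ enters. For any $z \in B_{\text{sup}}^{\mathbb{Z}}(g)$, Lemma~\ref{lemma_red_to_sub_2_02} yields $z \in B_{\text{sup}}(g) \subseteq \text{co}(C)$, so together with $z \in \mathbb{Z}^n$ we conclude $z \in \text{co}(C) \cap \mathbb{Z}^n = C$, where the last equality is precisely the hole-free property of $C$. This closes both inclusions.

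I do not foresee any serious obstacle; the only mild subtlety worth flagging is that the argument here is actually cleaner than the one in Corollary~\ref{cor_red_to_sub_Z}, since hole-freeness can be invoked directly on $\text{co}(C)$ without having to reduce to extreme points of the base polyhedron via an analogue of Lemma~\ref{lemma_extreme}. The reason this shortcut is available is that Lemma~\ref{lemma_red_to_sub_2_02} already delivers the full inclusion $B_{\text{sup}}(g) \subseteq \text{co}(C)$, so every integer point of $B_{\text{sup}}(g)$ automatically lies in $\text{co}(C) \cap \mathbb{Z}^n = C$.
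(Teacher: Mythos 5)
Your proof is correct and follows the paper's intended route: the paper establishes this corollary by the same adjustment it makes for Corollary~\ref{cor_red_to_sub_Z}, namely reducing to the real-valued Lemma~\ref{lemma_red_to_sub_2_02} (which applies because a bounded integral set is closed and bounded) and then using hole-freeness to pass to integral points. Your version is a slight streamlining of the reverse inclusion --- the paper re-enters the lemma's proof to argue via extreme points of the base polyhedron, whereas you correctly observe that the lemma's conclusion $B_{\text{sup}}(g) \subseteq \text{co}(C)$ already gives $B_{\text{sup}}^{\mathbb{Z}}(g) \subseteq \text{co}(C) \cap \mathbb{Z}^n = C$ directly.
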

These results lead to the following characterization of least weakly $(a,b)$-submajorized elements, whose proofs are analogous to those of Theorems~\ref{th_super} and~\ref{th_super_int}, respectively:
\begin{theorem}
Let $C \subset \mathbb{R}^n$ be a compact convex set. The following statements are equivalent:
\begin{enumerate}
    \item $C$ has a least weakly $(a,b)$-submajorized element for each $a \in \mathbb{R}^n_{>0}$ and $b \in \mathbb{R}^n$;
    \item $C$ has a least weakly $(1,b)$-submajorized element for each $b \in \mathbb{R}^n$;
\item For each $b \in \mathbb{R}^n$, there exists $x^* \in C$ that is an optimal solution to $\min_{x \in C} \Phi(x + b)$ for any choice of non-decreasing    continuous    Schur-convex function $\Phi$;
\item For each permutation $\pi$ of $N$, there exists $x^* \in C$ such that for all $x \in C$ we have $\sum_{i=1}^k x^*_{\pi(i)} \leq \sum_{i=1}^k x_{\pi(i)}$ for all $k \in N$;
    \item The function $g(A) := \min_{x \in C} x(A)$ is supermodular and $B_{\text{sup}}(g) \subseteq C \subseteq P_{\text{sup}}(g)$;
\end{enumerate}
\label{th_sub}
\end{theorem}

\begin{theorem}
Let $C \subset \mathbb{Z}^n$ be a bounded integral hole-free set. The following statements are equivalent:
\begin{enumerate}
    \item $C$ has a least weakly $(1,b)$-submajorized element for each $b \in \mathbb{Z}^n$;
\item For each $b \in \mathbb{Z}^n$, there exists $x^* \in C$ that is an optimal solution to $\min_{x \in C} \Phi(x + b)$ for any choice of non-decreasing    continuous    Schur-convex function $\Phi$;
\item For each permutation $\pi$ of $N$, there exists $x^* \in C$ such that for all $x \in C$ we have $\sum_{i=1}^k x^*_{\pi(i)} \leq \sum_{i=1}^k x_{\pi(i)}$ for all $k \in N$;
    \item The function $g(A) := \min_{x \in C} x(A)$ is integral and supermodular and $B_{\text{sup}}^{\mathbb{Z}}(g) \subseteq C \subseteq P_{\text{sup}}^{\mathbb{Z}}(g)$.
\end{enumerate}
\label{th_sub_int}
\end{theorem}

\subsection{Least $(a,b)$-majorized elements and base polyhedra}
\label{sec_char}

Using the characterization of the existence of least weakly $(a,b)$-submajorized elements in Theorems~\ref{th_super} and~\ref{th_super_int}, we obtain the following characterization of least $(a,b)$-majorized elements in terms of (integral) base polyhedra:
\begin{theorem}
Let $C \subset \mathbb{R}^n$ be a compact convex set. The following statements are equivalent:
\begin{enumerate}
    \item $C$ has a least $(a,b)$-majorized element for each $a \in \mathbb{R}^n_{>0}$ and $b \in \mathbb{R}^n$;
    \item $C$ has a least $(1,b)$-majorized element for each $b \in \mathbb{R}^n$;
\item For each $b \in \mathbb{R}^n$, there exists $x^* \in C$ that is an optimal solution to $\min_{x \in C} \Phi(x + b)$ for any choice of    continuous    Schur-convex function $\Phi$;
    \item The function $f(A) := max_{x \in C} x(A)$ is submodular and $C = B(f)$.
\end{enumerate}
\label{th_base}
\end{theorem}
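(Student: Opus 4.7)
My plan is to route the proof entirely through the previous two theorems and the existence lemma, exploiting the fact that a least $(a,b)$-majorized element is nothing but the simultaneous witness of the two weak majorization relations.

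First, (1) $\Rightarrow$ (2) is immediate on setting $a = (1,\ldots,1)$, and (2) $\Leftrightarrow$ (3) follows directly from Lemma~\ref{lemma_maj}: the condition $x^* \prec_{(1,b)} y$ unpacks to $(x^* + b) \prec (y + b)$, which by that lemma is the same as $\Phi(x^* + b) \le \Phi(y+b)$ for every Schur-convex $\Phi$. The implication (4) $\Rightarrow$ (1) is precisely Lemma~\ref{lemma_reduction}, which guarantees that every base polyhedron $B(f)$ admits a (unique) least $(a,b)$-majorized element for each admissible pair $(a,b)$.

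The substance of the theorem is therefore the implication (2) $\Rightarrow$ (4). The key observation is that a least $(1,b)$-majorized element of $C$ is simultaneously a least weakly $(1,b)$-submajorized and a least weakly $(1,b)$-supermajorized element, since by Lemma~\ref{lemma_leastab}(3) the relation $\prec_{(1,b)}$ is equivalent to the conjunction of $\prec_{(1,b)}^{\downarrow}$ and $\prec_{(1,b)}^{\uparrow}$. Thus hypothesis (2) triggers both Theorem~\ref{th_super} and Theorem~\ref{th_sub}; in particular, the former yields that $f(A) := \max_{x \in C} x(A)$ is submodular and that $B(f) \subseteq C \subseteq P(f)$.

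It remains only to sharpen $C \subseteq P(f)$ to $C \subseteq B(f)$. For this I appeal to the trivial but crucial fact built into the definition of $(1,b)$-majorization: $x^* \prec_{(1,b)} y$ forces $x^*(N) = y(N)$. Consequently, every $y \in C$ shares a common coordinate sum $R$ with the least $(1,b)$-majorized element $x^*$, so $f(N) = \max_{x \in C} x(N) = R$, and any $y \in C \subseteq P(f)$ automatically lies on the defining hyperplane of $B(f)$. Combined with the reverse inclusion $B(f) \subseteq C$ from Theorem~\ref{th_super}, this gives $C = B(f)$, proving (4). The only step requiring any care is the decomposition of $(1,b)$-majorization via Lemma~\ref{lemma_leastab}(3); everything else is bookkeeping built on the preceding results.
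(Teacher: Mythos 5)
Your proposal is correct and follows essentially the same route as the paper: (2)$\Leftrightarrow$(3) via Lemma~\ref{lemma_maj}, (4)$\Rightarrow$(1) via Lemma~\ref{lemma_reduction}, and (2)$\Rightarrow$(4) by feeding the hypothesis into Theorem~\ref{th_super} to get $B(f)\subseteq C\subseteq P(f)$ and then using the common element-sum forced by the definition of $\prec_{(1,b)}$ to conclude $C\subseteq B(f)$. The only (harmless) difference is that you also invoke Theorem~\ref{th_sub}, which is not actually needed for the argument.
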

\begin{proof}
(2) is a special case of (1); (2) and (3) are equivalent due to Lemma~\ref{lemma_maj}; (4) implies (1) by Lemma~\ref{lemma_reduction}. To prove that (2) implies (4), note that (2) implies that all elements in $C$ have the same element-sum. Moreover, it follows from Theorem~\ref{th_super} that $B(f) \subseteq C \subseteq P(f)$ for the submodular function $f(A) := \max_{x \in C} x(A)$. Since $x(N) = f(N)$ for all $x \in B(f)$, it follows that $x(N) = f(N)$ for all $x \in C$ and thus $C \subseteq B(f)$. We may therefore conclude that $C = B(f)$.
\end{proof}

\begin{theorem}
Let $C \subset \mathbb{Z}^n$ be a bounded integral hole-free set. The following statements are equivalent:
\begin{enumerate}
    \item $C$ has a least $(1,b)$-majorized element for each $b \in \mathbb{Z}^n$;
\item For each $b \in \mathbb{Z}^n$, there exists $x^* \in C$ that is an optimal solution to $\min_{x \in C} \Phi(x + b)$ for any choice of    continuous    Schur-convex function $\Phi$;
    \item The function $f(A) := \max_{x \in C} x(A)$ is integral and submodular and $C = B^{\mathbb{Z}}(f)$.
\end{enumerate}
\label{th_base_int}
\end{theorem}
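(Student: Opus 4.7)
The plan is to mirror the proof of Theorem~\ref{th_base}, substituting the integer analogs of the tools developed earlier in the excerpt. The equivalence of (1) and (2) follows immediately from Lemma~\ref{lemma_maj}: a vector $x^* \in C$ is $(1,b)$-majorized by every other vector in $C$ precisely when it minimizes $\Phi(\cdot + b)$ over $C$ for every continuous Schur-convex function $\Phi$. The implication (3) $\Rightarrow$ (1) is then direct from Lemma~\ref{lemma_reduction}, whose second statement supplies a least $(1,b)$-majorized element of $B^{\mathbb{Z}}(f)$ for any integral $b$.

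The main work is (1) $\Rightarrow$ (3). First I would invoke Lemma~\ref{lemma_leastab}(3) to note that any least $(1,b)$-majorized element of $C$ is, in particular, a least weakly $(1,b)$-supermajorized element of $C$. Since $C$ is a bounded integral hole-free set, Theorem~\ref{th_super_int} then applies and yields that the set function $f(A) := \max_{x \in C} x(A)$ is integral and submodular with $B^{\mathbb{Z}}(f) \subseteq C \subseteq P^{\mathbb{Z}}(f)$; integrality of $f$ is immediate because $x(A) \in \mathbb{Z}$ whenever $x \in C \subseteq \mathbb{Z}^n$.

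It remains to strengthen $C \subseteq P^{\mathbb{Z}}(f)$ to $C \subseteq B^{\mathbb{Z}}(f)$, for which I would exploit the additional sum-preservation that full $(1,b)$-majorization enforces beyond weak supermajorization. Applying (1) with $b = 0$, any least $(1,0)$-majorized element $x^* \in C$ satisfies $x^*(N) = y(N)$ for every $y \in C$ by the very definition of $(1,0)$-majorization. Because $B^{\mathbb{Z}}(f)$ is non-empty (the greedy construction of Lemma~\ref{lemma_extreme} produces an integral vertex whenever $f$ is integral and submodular) and is contained in $C$, we may pick $z \in B^{\mathbb{Z}}(f) \subseteq C$, which satisfies $z(N) = f(N)$. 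Consequently every $y \in C$ satisfies $y(N) = f(N)$ in addition to $y(A) \leq f(A)$ for all $A \in 2^N$, so $y \in B^{\mathbb{Z}}(f)$, giving $C = B^{\mathbb{Z}}(f)$.

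I do not anticipate a genuine obstacle; the argument is essentially a direct transcription of Theorem~\ref{th_base} into the integral setting, with Theorem~\ref{th_super_int} replacing Theorem~\ref{th_super} and Corollary~\ref{cor_red_to_sub_Z} ensuring that the hole-free hypothesis carries the inclusions through. The only delicate point is the verification of integrality and non-emptiness of $B^{\mathbb{Z}}(f)$, both of which follow without effort from the integrality of $C$ and the standard theory of base polyhedra recalled in Section~\ref{sec_pre}.
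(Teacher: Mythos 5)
Your proposal is correct and follows essentially the same route as the paper, which proves this theorem by transcribing the argument for Theorem~\ref{th_base} into the integral setting: equivalence of (1) and (2) via Lemma~\ref{lemma_maj}, (3)~$\Rightarrow$~(1) via Lemma~\ref{lemma_reduction}, and (1)~$\Rightarrow$~(3) by combining the sandwich $B^{\mathbb{Z}}(f) \subseteq C \subseteq P^{\mathbb{Z}}(f)$ from Theorem~\ref{th_super_int} with the constant element-sum forced by full $(1,0)$-majorization. Your extra remark on the non-emptiness and integrality of $B^{\mathbb{Z}}(f)$ via the greedy vertices of Lemma~\ref{lemma_extreme} is a valid (and slightly more careful) justification of a step the paper leaves implicit.
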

\begin{proof}
The proof is analogous to that of Theorem~\ref{th_base}.
\end{proof}

We conclude this subsection with a result that may be of independent interest. Theorems~\ref{th_base} and~\ref{th_base_int} imply that a particular optimality condition for minimizing separable convex functions over (integral) base polyhedra is unique for these problems. For a given feasible set $C$ and a vector $x \in C$, this condition is as follows (see also, e.g., Theorem~8.1 in  \cite{Fujishige2005}):
\begin{condition}
Let a continuous convex function $\phi_i$ be given for each $i \in N$. For each pair $(i,k) \in N^2$ such that $x_i + \alpha (e^k - e^i) \in C$ for some $\alpha >0$, we have $\phi_k^+(x_k) \geq \phi_i^-(x_i)$, where $\phi_k^+$ is the right derivative of $\phi_k$ and $\phi_i^-$ is the left derivative of $\phi_i$.
\label{cond_base}
\end{condition}
The proof of the existence of least $(a,b)$-majorized elements in (integral) base polyhedra (see also Lemma~\ref{lemma_reduction}) is directly based on this optimality condition. This implies the following characterization of the validity of this condition:
\begin{corollary}
Let $C_1 \subset \mathbb{R}^n$ be a compact convex set and $C_2 \subset \mathbb{Z}^n$ be a bounded integral hole-free set. Then the following hold:
\begin{itemize}
\item
Condition~\ref{cond_base} is a valid optimality condition for the problem $\min_{x \in C_1} \sum_{i \in N} \phi_i(x_i)$ for any choice of continuous convex functions $\phi_i$, $i \in N$, if and only if $C_1$ is a base polyhedron;
\item
Condition~\ref{cond_base} is a valid optimality condition for the problem $\min_{x \in C_2} \sum_{i \in N} \phi_i(x_i)$ for any choice of continuous convex functions $\phi_i$,    $i \in N$,    if and only if $C_2$ is an integral base polyhedron.
\end{itemize}
\end{corollary}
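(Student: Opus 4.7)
The plan is to deduce both statements from Theorems~\ref{th_base} and~\ref{th_base_int}. The ``if'' direction is classical: when $C$ is a (integral) base polyhedron, Condition~\ref{cond_base} is known to characterize optimality for any separable continuous convex objective (see, e.g., Theorem~8.1 of \cite{Fujishige2005}; this is precisely what underlies Lemma~\ref{lemma_reduction}). The work is therefore in the ``only if'' direction, which reduces cleanly to the existence of least $(1,b)$-majorized elements.

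Suppose first that $C_1 \subset \mathbb{R}^n$ is compact convex and that Condition~\ref{cond_base} is a valid optimality condition on $C_1$ for every choice of continuous convex $(\phi_i)_{i \in N}$. By Theorem~\ref{th_base}, it suffices to construct, for every $b \in \mathbb{R}^n$, a least $(1,b)$-majorized element of $C_1$. I would take $x^*$ to be a minimizer of the strictly convex quadratic $\sum_i (x_i + b_i)^2$ over the compact set $C_1$. For any pair $(i,k)$ with $x^* + \alpha(e^k - e^i) \in C_1$ for some $\alpha > 0$, convexity of $C_1$ permits arbitrarily small $\alpha$, and non-negativity of the directional derivative of the quadratic at $x^*$ forces the key ordering $x^*_k + b_k \geq x^*_i + b_i$.

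For any continuous convex $\phi$, the shifted family $\tilde{\phi}_i(z) := \phi(z + b_i)$ has one-sided derivatives $\tilde{\phi}_i^{\pm}(x^*_i) = \phi^{\pm}(x^*_i + b_i)$. Since $\phi^+$ is non-decreasing and $\phi^+ \geq \phi^-$ pointwise, the ordering from the previous step yields
\begin{equation*}
\tilde{\phi}_k^+(x^*_k) = \phi^+(x^*_k + b_k) \geq \phi^+(x^*_i + b_i) \geq \phi^-(x^*_i + b_i) = \tilde{\phi}_i^-(x^*_i)
\end{equation*}
at every admissible pair $(i,k)$. Hence $x^*$ satisfies Condition~\ref{cond_base} for $(\tilde{\phi}_i)$, and by the assumed validity, $x^*$ minimizes $\sum_i \phi(x_i + b_i)$ over $C_1$. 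Since $\phi$ was arbitrary, $x^*$ is a least $(1,b)$-majorized element of $C_1$, and Theorem~\ref{th_base} concludes that $C_1 = B(f)$ where $f(A) := \max_{x \in C_1} x(A)$.

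The integral case is parallel using Theorem~\ref{th_base_int}, and hole-freeness of $C_2$ reduces every admissible exchange to a unit exchange $x + (e^k - e^i) \in C_2$. The main obstacle is that a minimizer $x^*$ of $\sum_i (x_i + b_i)^2$ over $C_2$ a priori only satisfies the weaker bound $x^*_k + b_k \geq x^*_i + b_i - 1$, so the ``equality'' case requires an additional refinement. I would iteratively replace $x^*$ by $x^* + (e^k - e^i)$ whenever a feasible exchange achieves equality $x^*_k + b_k = x^*_i + b_i - 1$; each such replacement preserves both membership in $C_2$ (by hole-freeness) and the quadratic value (by direct computation), and a lexicographic tie-breaking argument among the finitely many quadratic minimizers ensures termination. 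At the terminal $x^*$ the strict ordering holds for every admissible exchange, and the derivative comparison and appeal to Theorem~\ref{th_base_int} then proceed exactly as in the continuous case. I expect this termination argument to be the most delicate step.
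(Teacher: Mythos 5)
Your treatment of the first bullet (the compact convex case) is correct and is essentially the route the paper intends: the paper's own proof is just a pair of citations, and what you have written out is precisely the content of the cited argument --- take a minimizer $x^*$ of $\sum_{i}(x_i+b_i)^2$ over $C_1$, use first-order optimality over the convex set to get $x^*_k+b_k\geq x^*_i+b_i$ along every admissible exchange direction, observe that this makes $x^*$ satisfy Condition~\ref{cond_base} for the shifted family $\tilde{\phi}_i(z)=\phi(z+b_i)$ for \emph{every} continuous convex $\phi$, invoke validity of the condition to get optimality for every such $\phi$ (the linear choices $\phi=\pm\,\mathrm{id}$ also force all elements of $C_1$ to have equal sum, as the definition of $\prec_{(1,b)}$ requires), and finish with Theorem~\ref{th_base}.

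The integral bullet has a genuine gap: the terminal property you aim for --- that every admissible unit exchange $(i,k)$ at $x^*$ satisfies $x^*_k+b_k\geq x^*_i+b_i$ --- is not attainable in general, even when $C_2$ \emph{is} an integral base polyhedron. Take $n=2$, $f(\lbrace 1\rbrace)=f(\lbrace 2\rbrace)=f(\lbrace 1,2\rbrace)=1$, so $C_2=B^{\mathbb{Z}}(f)=\lbrace(1,0),(0,1)\rbrace$, and $b=0$. Both points minimize $\sum_i x_i^2$; at $(1,0)$ the unique admissible exchange is $(i,k)=(1,2)$ and it realizes your equality case $x^*_2+b_2=x^*_1+b_1-1$, and performing the swap lands you at $(0,1)$, where the symmetric exchange does the same. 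Hence your replacement procedure either cycles or, under any tie-breaking rule, halts at a point where the ordering still fails; and at such a point Condition~\ref{cond_base} genuinely fails for the shifted family when $\phi$ is strictly convex (e.g.\ $\phi(z)=z^2$ gives $\phi^+(0)=0<2=\phi^-(1)$), so validity of the condition --- which is only a sufficient criterion --- tells you nothing about that point, even though it is in fact optimal. The statement itself is not in danger; what is missing is a different bridge from the quadratic minimizer to least $(1,b)$-majorization. One clean repair: by Lemma~\ref{lemma_leastab}, and because for integral $x,y,b$ the functions $\alpha\mapsto\sum_i\max(0,\alpha\pm(x_i+b_i))$ are piecewise linear in $\alpha$ with integer breakpoints (so that it suffices to compare them at integer $\alpha$), it is enough to show that $x^*$ minimizes $\sum_i\phi(x_i+b_i)$ for $\phi(z)=\max(0,\alpha+z)$ and $\phi(z)=\max(0,\alpha-z)$ with $\alpha\in\mathbb{Z}$. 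For these $\phi$ the kink sits at an integer, and one checks directly that $\phi^+(t)\geq\phi^-(s)$ for all integers $t\geq s-1$; since the quadratic minimizer satisfies exactly $x^*_k+b_k\geq x^*_i+b_i-1$ on admissible exchanges, it already satisfies Condition~\ref{cond_base} for this restricted family with no refinement step, and the appeal to validity and to Theorem~\ref{th_base_int} then goes through as in your continuous argument.
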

\begin{proof}
The ``if"-parts follow from, e.g., Theorem~8.1 in \cite{Fujishige2005}.   
Regarding the ``only if'' parts, Condition~\ref{cond_base} implies via Theorem~1 in \cite{SchootUiterkamp2021} the existence of least $(a,b)$-majorized elements for each pair of vectors $a \in \mathbb{R}^n_{>0}$ and $b \in \mathbb{R}^n$ (see also Lemma~\ref{lemma_reduction}). It follows from parts (1) and (4) of Theorem~\ref{th_base} that $C_1$ is a base polyhedron and from parts (1) and (3) of Theorem~\ref{th_base_int} that $C_2$ is an integral base polyhedron.

\end{proof}

\subsection{Least weakly absolutely $(a,b)$-majorized elements and bisubmodular polyhedra}
\label{sec_char_bi}

In this section, we prove that bisubmodular polyhedra are the only compact convex sets with least weakly absolutely $(a,b)$-majorized elements for all $a \in \mathbb{R}^n_{>0}$ and $b \in \mathbb{R}^n$ and that integral bisubmodular polyhedra are the only bounded integral hole-free sets with least weakly absolutely $(1,b)$-majorized elements for all $b \in \mathbb{Z}^n$. For this, we follow the same proof idea as in Section~\ref{sec_char_super} for the case of least weakly $(a,b)$-supermajorized elements and submodular polyhedra. First, we prove in Lemma~\ref{lemma_red_to_bi_1} that, given a closed and bounded set $C \subset \mathbb{R}^n$, the existence of least weakly absolutely $(1,b)$-elements for all $b\in \mathbb{R}^n$ implies for each permutation $\pi$ and orthant $(S,T)$ the existence of a vector in $C$ that maximizes functions of the form $x(X) - x(Y)$, where $(X,Y) \sqsubseteq (S,T)$:

\begin{lemma}
Let $C \subset \mathbb{R}^n$ be a closed and bounded set. Suppose that $C$ has a least weakly absolutely $(1,b)$-majorized element for each $b \in \mathbb{R}^n$. Then for each permutation $\pi$ of $N$ and sign vector $s \in \lbrace -1,1 \rbrace^n$ there exists $x^* \in C$ such that for all $x \in C$ we have $\sum_{i=1}^k s_{\pi(i)} x^*_{\pi(i)} \geq \sum_{i=1}^k s_{\pi(i)} x_{\pi(i)}$ for all $k \in N$.
\label{lemma_red_to_bi_1}
\end{lemma}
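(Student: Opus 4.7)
The plan is to adapt the strategy of Lemma~\ref{lemma_red_to_sub_1} to the setting of weak absolute majorization, where only even continuous convex test functions are available. The key observation is that if we shift the $i$-th coordinate of $x$ by $-s_i c$ for a sufficiently large constant $c > 0$, then
\[
|x_i - s_i c| = c - s_i x_i,
\]
so an even convex function of the shifted coordinate becomes a non-decreasing convex function of the signed quantity $-s_i x_i$. By placing the shift magnitudes on a well-separated decreasing scale indexed by the position under $\pi$, we can encode every signed partial sum $\sum_{j=1}^{k} s_{\pi(j)} x_{\pi(j)}$ as $\sum_{i} \phi^k(x_i + \hat b_i)$ for a single shift vector $\hat b$ and a family of even convex functions $\phi^k$.

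Concretely, let $L := \max_{i \in N} \max_{x \in C} |x_i|$, which is finite since $C$ is closed and bounded. Given $\pi$ and $s$, I would set $\gamma_j := (n-j+1)(2L+1)$ for $j \in N$, so that $\gamma_1 > \cdots > \gamma_n > 0$ and $\gamma_j - \gamma_{j+1} = 2L+1$. Define $\hat b \in \mathbb{R}^n$ by $\hat b_{\pi(j)} := -s_{\pi(j)} \gamma_j$. Since $\gamma_j > L \geq |x_{\pi(j)}|$, we obtain
\[
|x_{\pi(j)} + \hat b_{\pi(j)}| = \gamma_j - s_{\pi(j)} x_{\pi(j)} \in [\gamma_j - L,\; \gamma_j + L] \quad \text{for all } x \in C.
\]
For each $k \in N$, pick a threshold $M_k$ strictly between $\gamma_{k+1} + L$ and $\gamma_k - L$ (and any $M_n < \gamma_n - L$), and consider the even continuous convex function $\phi^k(\zeta) := \max(-M_k - \zeta,\, 0,\, -M_k + \zeta) = \max(0,\, |\zeta| - M_k)$. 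By the threshold choice, $\phi^k(x_{\pi(j)} + \hat b_{\pi(j)}) = \gamma_j - s_{\pi(j)} x_{\pi(j)} - M_k$ for $j \leq k$ and $\phi^k(x_{\pi(j)} + \hat b_{\pi(j)}) = 0$ for $j > k$, hence
\[
\sum_{i \in N} \phi^k(x_i + \hat b_i) = \sum_{j=1}^{k} (\gamma_j - M_k) - \sum_{j=1}^{k} s_{\pi(j)} x_{\pi(j)}.
\]

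By hypothesis, $C$ contains a least weakly absolutely $(1,\hat b)$-majorized element $x^*$; by definition, $x^*$ minimizes $\sum_i \phi(x_i + \hat b_i)$ over $C$ for every even continuous convex $\phi$, and in particular for each $\phi^k$ (which is precisely the canonical max-function from Lemma~\ref{lemma_leastab}(4) with $\alpha = -M_k$). Since $\sum_{j=1}^{k} (\gamma_j - M_k)$ is independent of $x$, minimizing the left-hand side above over $x \in C$ is equivalent to maximizing $\sum_{j=1}^{k} s_{\pi(j)} x_{\pi(j)}$ over $x \in C$. Therefore $\sum_{j=1}^{k} s_{\pi(j)} x^*_{\pi(j)} \geq \sum_{j=1}^{k} s_{\pi(j)} x_{\pi(j)}$ for every $x \in C$ and every $k \in N$, which is the conclusion of the lemma. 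The main subtlety is that a \emph{single} choice of $\hat b$ must simultaneously encode the partial sums for \emph{all} $k$; this is ensured by the gap condition $\gamma_j - \gamma_{j+1} > 2L$, which guarantees clean threshold separations so that the one family $\lbrace \phi^k \rbrace_{k \in N}$ recovers each of the $n$ partial sums from the same $\hat b$.
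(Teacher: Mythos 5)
Your proof is correct and takes essentially the same approach as the paper: both choose a single shift vector that pushes coordinate $\pi(j)$ deep into the orthant determined by $-s$ at well-separated magnitude levels ordered by $\pi$, so that the even max-functions $\max(0,|\zeta|-M_k)$ recover each signed partial sum; your explicit arithmetic-progression shifts with a uniform bound $L$ are just a cleaner packaging of the paper's recursive construction from per-coordinate bounds.
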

\begin{proof}
Let a permutation $\pi$ and a sign vector $s$ be given and let $(S,T) \in 3^N$ be the unique orthant corresponding to $s$ i.e., $(S,T) := N \ | \ s$, i.e., $S := \lbrace i \in N \ | \ s_i = 1 \rbrace$ and $T:= \lbrace i \in N \ | \ s_i = -1 \rbrace$ (see also Section~\ref{sec_bisubmodular}). Moreover, let $l_i := \min_{x \in C} x_i$ and $u_i := \min_{x \in C} x_i$ for $i \in N$. Based on $(S,T)$, we define alternative bound vectors $\hat{l},\hat{u} \in \mathbb{R}^n$ as follows:
\begin{align*}
\hat{l}_i &:= \begin{cases}
u_i & \text{if } i \in S; \\
l_i & \text{if } i \in T;
\end{cases} &
\hat{u}_i &:=  \begin{cases}
l_i & \text{if } i \in S; \\
u_i & \text{if } i \in T.
\end{cases}
\end{align*}
Note that $s_i \hat{l}_i \geq s_i \hat{u}_i$ for all $i \in N$. We recursively define the vector $\hat{b}$ as follows:
\begin{align*}
\hat{b}_{\pi(n)} &:= - \hat{l}_{\pi(n)} - s_{\pi(n)}; \\
\hat{b}_{\pi(i)} &:= 
- s_{\pi(i)} ( | \hat{b}_{\pi(i+1)} + \hat{u}_{\pi(i+1)} | + 1) - \hat{l}_{\pi(i)}, \quad i \in \lbrace 1,\ldots, n-1 \rbrace.
\end{align*}

Note that for all $i$ with $\pi(i) \in S$ we have $0 \geq \hat{b}_{\pi(i)} + \hat{l}_{\pi(i)} \geq \hat{b}_{\pi(i)} + \hat{u}_{\pi(i)}$ and for all $i$ with $\pi(i) \in T$ we have $0 \leq \hat{b}_{\pi(i)} + \hat{l}_{\pi(i)} \leq \hat{b}_{\pi(i)} + \hat{u}_{\pi(i)}$.  It follows that, for any $x \in C$, we have for $i$ with $\pi(i) \in S$ that
\begin{equation}
-|\hat{b}_{\pi(i)} + \hat{l}_{\pi(i)}|
= \hat{b}_{\pi(i)} + \hat{l}_{\pi(i)}
\geq
\hat{b}_{\pi(i)} + x_{\pi(i)}
\geq \hat{b}_{\pi(i)} + \hat{u}_{\pi(i)}
= -|\hat{b}_{\pi(i)} + \hat{u}_{\pi(i)}|
\label{eq_tilde_S}
\end{equation}
and for $i$ with $\pi(i) \in T$ that
\begin{equation}
|\hat{b}_{\pi(i)} + \hat{l}_{\pi(i)}|
= \hat{b}_{\pi(i)} + \hat{l}_{\pi(i)}
\leq
\hat{b}_{\pi(i)} + x_{\pi(i)}
\leq \hat{b}_{\pi(i)} + \hat{u}_{\pi(i)}
= |\hat{b}_{\pi(i)} + \hat{u}_{\pi(i)}|.
\label{eq_tilde_T}
\end{equation}
Also, note that for all $i < n$ we have
\begin{equation}
| \hat{b}_{\pi(i)} + \hat{l}_{\pi(i)} |
= |\hat{b}_{\pi(i+1)} + \hat{u}_{\pi(i+1)} | + 1
> |\hat{b}_{\pi(i+1)} + \hat{l}_{\pi(i+1)} |
\label{eq_tilde_3}
\end{equation}
and for all $i > 1$ we have
\begin{equation}
|\hat{b}_{\pi(i)} + \hat{u}_{\pi(i)}| 
=
- s_{\pi(i-1)}(\hat{b}_{\pi(i-1)} + \hat{l}_{\pi(i-1)}) - 1
< |\hat{b}_{\pi(i-1)} + \hat{l}_{\pi(i-1)}|.
\label{eq_tilde_1}
\end{equation}

We define for each $k \in N$ the function $\hat{\phi}^k(y) := \max(-|\hat{b}_{\pi(k)} + \hat{l}_{\pi(k)}| - y, 0 , -|\hat{b}_{\pi(k)} + \hat{l}_{\pi(k)}| +y)$. For each $k \in N$, we have for $i$ with $\pi(i) \in S$ that
\begin{align*}
& \quad \
\hat{\phi}^k(x_{\pi(i)} + \hat{b}_{\pi(i)}) \\
&=
\max(-|\hat{b}_{\pi(k)} + \hat{l}_{\pi(k)}| - x_{\pi(i)} - \hat{b}_{\pi(i)}, 0 , -|\hat{b}_{\pi(k)} + \hat{l}_{\pi(k)}| + x_{\pi(i)} + \hat{b}_{\pi(i)}) \\
&=
\max( -|\hat{b}_{\pi(k)} + \hat{l}_{\pi(k)}| - x_{\pi(i)} - \hat{b}_{\pi(i)}, 0 ) \\
&= \begin{cases}
-|\hat{b}_{\pi(k)} + \hat{l}_{\pi(k)}| - x_{\pi(i)} - \hat{b}_{\pi(i)} & \text{if } i \leq k; \\
0 & \text{if } i > k,
\end{cases}
\end{align*}
where the second equality follows since $x_{\pi(i)} + \hat{b}_{\pi(i)} \leq 0$ by (\ref{eq_tilde_S}) and the third equality follows from (\ref{eq_tilde_S}) and~(\ref{eq_tilde_3}) (for $i \leq k$) and (\ref{eq_tilde_S}), (\ref{eq_tilde_1}), and (\ref{eq_tilde_3}) (for $i > k$). Analogously, we have for $i$ with $\pi(i) \in T$ that
\begin{align*}
& \quad \
\hat{\phi}^k(x_{\pi(i)} + \hat{b}_{\pi(i)}) \\
&=
\max(-|\hat{b}_{\pi(k)} + \hat{l}_{\pi(k)}| - x_{\pi(i)} - \hat{b}_{\pi(i)}, 0 , -|\hat{b}_{\pi(k)} + \hat{l}_{\pi(k)}| + x_{\pi(i)} + \hat{b}_{\pi(i)}) \\
&=
\max( 0 , -|\hat{b}_{\pi(k)} + \hat{l}_{\pi(k)}| + x_{\pi(i)} + \hat{b}_{\pi(i)}) \\
&= \begin{cases}
-|\hat{b}_{\pi(k)} + \hat{l}_{\pi(k)}| + x_{\pi(i)} + \hat{b}_{\pi(i)} & \text{if } i \leq k; \\
0 & \text{if } i > k,
\end{cases}
\end{align*}
where the second equality follows since $x_{\pi(i)} + \hat{b}_{\pi(i)} \geq 0$ by Equation~(\ref{eq_tilde_T}) and the third equality follows from (\ref{eq_tilde_T}) and~(\ref{eq_tilde_3}) (for $i \leq k$) and (\ref{eq_tilde_T}), (\ref{eq_tilde_1}), and (\ref{eq_tilde_3}) (for $i > k$). It follows that
\begin{equation*}
\sum_{i \in N} \hat{\phi}^k(x_i + \hat{b}_i)
=
-\sum_{i=1}^k  s_{\pi(i)} (x_{\pi(i)} + \hat{b}_{\pi(i)}) - k|\hat{b}_{\pi(k)} + \hat{l}_{\pi(k)}|.
\end{equation*}
By assumption, there exists a vector $x^* \in C$ that simultaneously minimizes these functions over $C$ for all $k \in N$. This means that for any $x \in C$ we have
\begin{align*}
& \quad \
-\sum_{i=1}^k   s_{\pi(i)} (x^*_{\pi(i)} + \hat{b}_{\pi(i)}) - k|\hat{b}_{\pi(k)} + \hat{l}_{\pi(k)}| \\
&= \sum_{i \in N} \hat{\phi}^k(x^*_i + \hat{b}_i) 
 \leq
\sum_{i \in N} \hat{\phi}^k(x_i + \hat{b}_i)\\
&=
-\sum_{i=1}^k  s_{\pi(i)} (x_{\pi(i)} + \hat{b}_{\pi(i)}) - k|\hat{b}_{\pi(k)} + \hat{l}_{\pi(k)}|,
\end{align*}
which implies that $\sum_{i=1}^k s_{\pi(i)} x^*_{\pi(i)} \geq \sum_{i=1}^k s_{\pi(i)} x_{\pi(i)}$ for all $k \in N$.
\end{proof}
\begin{remark}
\label{remark_diff_2}
Note that, analogously to Lemma~\ref{lemma_red_to_sub_1} for least $(1,b)$-majorized elements, we can prove the result of Lemma~\ref{lemma_red_to_bi_1} starting from the assumption that for each $b \in \mathbb{R}^n$ there exists a vector in $C$ that is an optimal solution to $\min_{x \in C} \sum_{i \in N} \phi(x_i + b_i)$ for any \emph{continuously differentiable} convex function $\phi$ (see also Remark~\ref{remark_diff}).
\end{remark}

In a second step, we prove in Lemma~\ref{lemma_red_to_bi_2} that the convex hull of any closed and bounded set satisfying the result of Lemma~\ref{lemma_red_to_bi_1} is a bisubmodular polyhedron. Analogously to the proof of Lemma~\ref{lemma_red_to_sub_2}, the proof of this lemma is inspired by the proof of Theorem~2 in \cite{Nakamura1988}, where the optimality of the greedy algorithm in \cite{Dunstan1973} for linear optimization is characterized in terms of bisubmodular polyhedra.
\begin{lemma}
Let $C \subset \mathbb{R}^n$ be a closed and bounded set. If for each permutation $\pi$ of $N$ and sign vector $s \in \lbrace -1,1 \rbrace^n$ there exists $x^* \in C$ such that for all $x \in C$ we have $\sum_{i=1}^k s_{\pi(i)} x^*_{\pi(i)} \geq \sum_{i=1}^k s_{\pi(i)} x_{\pi(i)}$ for all $k \in N$, then the convex hull of $C$ is a bisubmodular polyhedron    defined    by the biset function $h(X,Y) := \max_{x \in C} (x(X) - x(Y))$.
\label{lemma_red_to_bi_2}
\end{lemma}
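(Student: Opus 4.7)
My plan is to mirror the three-part structure of Lemma~\ref{lemma_red_to_sub_2}, but adapted to the bisubmodular setting. I would first establish bisubmodularity of $h$, then show the inclusion $\text{co}(C) \subseteq \tilde{B}(h)$, and finally prove the reverse inclusion via an extreme-point argument using Lemma~\ref{lemma_extreme_bi}.

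For bisubmodularity, given $(S_1,T_1),(S_2,T_2) \in 3^N$, I would set $(X_1,Y_1) := (S_1,T_1) \sqcap (S_2,T_2)$ and $(X_2,Y_2) := (S_1,T_1) \sqcup (S_2,T_2)$ and first verify the pointwise identity
\begin{equation*}
(x(S_1) - x(T_1)) + (x(S_2) - x(T_2)) = (x(X_1) - x(Y_1)) + (x(X_2) - x(Y_2))
\end{equation*}
by a short case analysis on whether each $i \in N$ lies in $S_k$, $T_k$, or neither for $k=1,2$. Then I would choose a permutation $\pi$ whose first $|X_1|$ entries list $X_1$, next $|Y_1|$ entries list $Y_1$, next $|X_2 \setminus X_1|$ entries list $X_2 \setminus X_1$, next $|Y_2 \setminus Y_1|$ entries list $Y_2 \setminus Y_1$, and remaining entries are arbitrary; and a sign vector $s$ with $s_i = +1$ on $X_2$, $s_i = -1$ on $Y_2$, arbitrary elsewhere. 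By hypothesis there exists $x^* \in C$ simultaneously maximizing all prefix sums $\sum_{i=1}^k s_{\pi(i)} x_{\pi(i)}$ over $C$. Taking $k=|X_1|+|Y_1|$ and $k=|X_2|+|Y_2|$ respectively gives $x^*(X_1) - x^*(Y_1) = h(X_1,Y_1)$ and $x^*(X_2) - x^*(Y_2) = h(X_2,Y_2)$; combining with the identity and the definition of $h$ yields $h(S_1,T_1) + h(S_2,T_2) \geq h(X_1,Y_1) + h(X_2,Y_2)$, which is bisubmodularity.

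For $\text{co}(C) \subseteq \tilde{B}(h)$, any $y \in \text{co}(C)$ is a convex combination $y = \sum_j \lambda_j x^j$ of vectors $x^j \in C$, so for every $(X,Y) \in 3^N$ we have $y(X) - y(Y) = \sum_j \lambda_j (x^j(X) - x^j(Y)) \leq \max_{x \in C}(x(X) - x(Y)) = h(X,Y)$. For the reverse, I would suppose for contradiction that some $z \in \tilde{B}(h) \setminus \text{co}(C)$ exists; since $\text{co}(C)$ is compact and convex and $\tilde{B}(h)$ is bounded (being contained in $\text{co}(C)$ would follow once we prove it; alternatively a standard extreme-point reduction works in any case), I may assume $z$ is an extreme point of $\tilde{B}(h)$. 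By Lemma~\ref{lemma_extreme_bi}, there exist $\pi$ and $s$ with $z_{\pi(1)} = s_{\pi(1)} h(\{\pi(1)\} \mid s)$ and $z_{\pi(k)} = s_{\pi(k)}(h(\{\pi(1),\ldots,\pi(k)\} \mid s) - h(\{\pi(1),\ldots,\pi(k-1)\} \mid s))$. Applying the hypothesis for this $(\pi,s)$ yields $y^* \in C$ such that $\sum_{i=1}^k s_{\pi(i)} y^*_{\pi(i)} = \max_{x \in C}(x(S_k) - x(T_k)) = h(\{\pi(1),\ldots,\pi(k)\} \mid s)$ for every $k$, where $(S_k,T_k) = \{\pi(1),\ldots,\pi(k)\} \mid s$. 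Telescoping and multiplying by $s_{\pi(k)}$ then gives $y^*_{\pi(k)} = z_{\pi(k)}$ for every $k$, so $z = y^* \in C \subseteq \text{co}(C)$, a contradiction.

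The main obstacle is the first step: arranging $\pi$ and $s$ so that a single simultaneous maximizer delivered by the hypothesis is tight for both $h(X_1,Y_1)$ and $h(X_2,Y_2)$ at once. This requires using the nesting $X_1 \subseteq X_2$, $Y_1 \subseteq Y_2$ to place $X_1 \cup Y_1$ in an initial segment of $\pi$, and carefully choosing the signs on the outer layer $(X_2 \setminus X_1) \cup (Y_2 \setminus Y_1)$ so that the same sign vector simultaneously encodes both the inner and outer bisets; the pointwise identity then converts the inequality $h(X_1,Y_1) + h(X_2,Y_2) = x^*(S_1) - x^*(T_1) + x^*(S_2) - x^*(T_2) \leq h(S_1,T_1) + h(S_2,T_2)$ into bisubmodularity.
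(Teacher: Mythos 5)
Your proposal is correct and follows essentially the same route as the paper: the same choice of permutation/sign vector exploiting the nesting $(S_1,T_1)\sqcap(S_2,T_2)\sqsubseteq(S_1,T_1)\sqcup(S_2,T_2)$ to get a single simultaneous maximizer tight at both bisets, the same convex-combination argument for $\mathrm{co}(C)\subseteq\tilde{B}(h)$, and the same extreme-point/telescoping argument via Lemma~\ref{lemma_extreme_bi} for the reverse inclusion. Your explicit pointwise modularity identity is just a cleaner packaging of the chain of equalities the paper writes out.
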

\begin{proof}
First, we prove that $h$ is bisubmodular. For this, choose any two pairs $(S_1,T_1),(S_2,T_2) \in 3^N$. Since $S_1 \cap T_1 = S_2 \cap T_2 = \emptyset$, we have that $i \in S_1 \cap S_2$ implies $i \not\in T_1 \cup T_2$ and thus that $S_1 \cap S_2 \subseteq (S_1 \cup S_2) \backslash (T_1 \cup T_2)$. Analogously, we have $T_1 \cap T_2 \subseteq (T_1 \cup T_2) \backslash (S_1 \cup S_2)$. This means that there exists a permutation $\pi$ of $N$ that satisfies the following properties:
\begin{itemize}
\item
$\pi(i) \in 
(S_1 \cap S_2) \cup (T_1 \cap T_2)$ if $1 \leq i \leq |S_1 \cap S_2| + |T_1 \cap T_2|$;
\item
$\pi(i) \in
(S_1 \cup S_2) \backslash (T_1 \cup T_2) \cup (T_1 \cup T_2) \backslash (S_1 \cup S_2)$ if  $1 \leq i \leq |(S_1 \cup S_2) \backslash (T_1 \cup T_2) \cup (T_1 \cup T_2) \backslash (S_1 \cup S_2)|$.
\end{itemize}
By assumption, there exists a vector $x^* \in C$ such that for all $k\in N$ we have $\sum_{i=1}^k s_{\pi(i)} x^*_{\pi(i)} \geq \sum_{i=1}^k s_{\pi(i)} x_{\pi(i)}$ for all $x \in C$. In particular, this holds for $k = |S_1 \cap S_2| + |T_1 \cap T_2|$ and    $k = |(S_1 \cup S_2) \backslash (T_1 \cup T_2)| + |(T_1 \cup
T_2) \backslash (S_1 \cup S_2)|$,    which implies that $x^*(S_1 \cap S_2) - x^*(T_1 \cap T_2) \geq x(S_1 \cap S_2) - x(T_1 \cap T_2)$ and $x^*((S_1 \cup S_2) \backslash (T_1 \cup T_2)) - x^*( (T_1 \cup T_2) \backslash (S_1 \cup S_2)) \geq x((S_1 \cup S_2) \backslash (T_1 \cup T_2)) - x( (T_1 \cup T_2) \backslash (S_1 \cup S_2))$ for all $x \in C$. It follows that
\begingroup
\allowdisplaybreaks
\begin{align*}
& \quad \ h(S_1,T_1) + h(S_2,T_2) \\
&= \max_{x \in C} (x(S_1) - x(T_1))
+ \max_{x \in C} (x(S_2) - x(T_2)) \\
&\geq x^*(S_1) - x^*(T_1) +  x^*(S_2) - x^*(T_2) \\
&= x^*(S_1 \cup S_2) + x^*(S_1 \cap S_2) - x^*(T_1 \cup T_2) - x^*(T_1 \cap T_2) \\
&= x^*(S_1 \cup S_2) - x^*(S_1 \cup S_2 \cup T_1 \cup T_2) + x^*(S_1 \cup S_2 \cup T_1 \cup T_2) - x^*(T_1 \cup T_2) \\
& \quad + \max_{x \in C}(x(S_1 \cap S_2) - x(T_1 \cap T_2)) \\
&= -x^*((T_1 \cup T_2) \backslash (S_1 \cup S_2)) + x^*((S_1 \cup S_2) \backslash (T_1 \cup T_2)) + h(S_1 \cap S_2,T_1 \cap T_2) \\
& =\max_{x \in C}(x((S_1 \cup S_2) \backslash (T_1 \cup T_2)) - x((T_1 \cup T_2) \backslash (S_1 \cup S_2))) \\
& \quad + h(S_1 \cap S_2,T_1 \cap T_2) \\
&= h((S_1 \cup S_2) \backslash (T_1 \cup T_2),(T_1 \cup T_2) \backslash (S_1 \cup S_2)) + h(S_1 \cap S_2,T_1 \cap T_2).
\end{align*} \endgroup
Thus, $h$ is bisubmodular.

Finally, we prove that $\text{co}(C) = \tilde{B}(h)$. For this, first note that each $y \in \text{co}(C)$ can be written as a convex combination of vectors in $C$, i.e., $y = \sum_{j=1}^m \lambda^j x^j$ for some positive values $\lambda^1,\ldots, \lambda^m \in \mathbb{R}_{>0}$ with $\sum_{j=1}^m \lambda^j = 1$ and vectors $x^j \in C$. It follows that
\begin{align*}
    y(S) - y(T) &= \sum_{j=1}^m \lambda^j (x^j(S) - x^j(T))
    \leq \max_{x \in \lbrace x^1,\ldots,x^m \rbrace} (x(S) - x(T)) \\
&    \leq \max_{x \in C} (x(S) - x(T))= h(S,T)
\end{align*}
for all $(S,T) \in 3^N$. Thus, $\text{co}(C) \subseteq \tilde{B}(h)$. To prove that $\text{co}(C) \supseteq \tilde{B}(g)$, suppose that there exists a vector $z \in \tilde{B}(h)$ that is not in $\text{co}(C)$. Since $\text{co}(C)$ is convex, we may assume without loss of generality that $z$ is an extreme point of $\tilde{B}(h)$. It follows from Lemma~\ref{lemma_extreme_bi} that there exists a permutation $\pi$ of $N$ and a sign vector $s \in \lbrace -1,1 \rbrace^n$ such that $z_{\pi(1)} =  s_{\pi(1)} h(\lbrace \pi(1) \rbrace \ | \ s)$ and, for each $k > 1$, we have
\begin{equation*}
z_{\pi(k)} =  s_{\pi(k)} (h(\lbrace \pi(1),\ldots,\pi(k) \rbrace \ | \ s) - h(\lbrace \pi(1),\ldots,\pi(k-1) \rbrace \ | \ s) ).
\end{equation*}
By assumption, there exists a vector $y^* \in C$ such that for all $x \in C$ we have $\sum_{i = 1}^k s_{\pi(i)} y_{\pi(i)}^* \geq \sum_{i=1}^k s_{\pi(i)} x_{\pi(i)}$ for all $k \in N$. It follows that
\begin{equation*}
z_{\pi(1)} =  s_{\pi(1)} h(\lbrace \pi(1) \rbrace \ | \ s)
= s_{\pi(1)} \max_{x\in C} x(\lbrace \pi(1) \rbrace \ | \ s)
= y^*_{\pi(1)},
\end{equation*}
and
\begin{align*}
z_{\pi(k)} &=  s_{\pi(k)} (h(\lbrace \pi(1),\ldots,\pi(k) \rbrace \ | \ s) - h(\lbrace \pi(1),\ldots,\pi(k-1) \rbrace \ | \ s) \\
&=s_{\pi(k)} \max_{x \in C} (x(\lbrace \pi(1),\ldots,\pi(k) \rbrace^+) - x(\lbrace \pi(1),\ldots,\pi(k) \rbrace^-)) \\
& \quad
- s_{\pi(k)} \max_{x \in C} (x(\lbrace \pi(1),\ldots,\pi(k-1) \rbrace^+) - x(\lbrace \pi(1),\ldots,\pi(k-1) \rbrace^-)) \\
&= s_{\pi(k)} \max_{x \in C} \sum_{i=1}^k s_{\pi(i)} x_{\pi(i)} - s_{\pi(k)} \max_{x \in C} \sum_{i=1}^{k-1} s_{\pi(i)} x_{\pi(i)} \\
&= s_{\pi(k)}  \sum_{i=1}^k s_{\pi(i)} y^*_{\pi(i)} - s_{\pi(k)} \sum_{i=1}^{k-1} s_{\pi(i)} y^*_{\pi(i)} \\
&= s_{\pi(k)}^2 y^*_{\pi(k)} 
= y^*_{\pi(k)}.
\end{align*}
This implies that $z = y^*$ and thus that $z \in \text{co}(C)$. This is a contradiction, which means that we have $\text{co}(C) \supseteq \tilde{B}(h)$ and thus $\text{co}(C) = \tilde{B}(h)$.
\end{proof}

Analogously to the case of submodular and base polyhedra in Lemma~\ref{lemma_red_to_sub_2} and Corollary~\ref{cor_red_to_sub_Z},   
it follows that any \emph{convex} set $C$ satisfying the requirements of Lemma~\ref{lemma_red_to_bi_2} is necessarily a bisubmodular polyhedron since it equals its convex hull, and that any \emph{integral hole-free} set $C$ satisfying the requirements of the lemma is an integral bisubmodular polyhedron since it equals the integral points of its convex hull.    As a consequence, we can now prove our two main characterization results for the existence of least weakly absolutely $(a,b)$-majorized elements and (integral) bisubmodular polyhedra:
\begin{theorem}
Let $C \subset \mathbb{R}^n$ be a compact convex set. The following statements are equivalent:
\begin{enumerate}
    \item $C$ has a least weakly absolutely $(a,b)$-majorized element for each $a \in \mathbb{R}^n_{>0}$ and $b \in \mathbb{R}^n$;
    \item $C$ has a least weakly absolutely $(1,b)$-majorized element for each $b \in \mathbb{R}^n$;
\item
For each $b \in \mathbb{R}^n$, there exists $x^* \in C$ that is an optimal solution to $\min_{x \in C} \Phi(x+b)$ for any choice of    monotonically    even    continuous    Schur-convex function $\Phi$;
\item  For each permutation $\pi$ of $N$ and sign vector $s \in \lbrace -1,1 \rbrace^n$ there exists $x^* \in C$ such that for all $x \in C$ we have $\sum_{i=1}^k s_{\pi(i)} x^*_{\pi(i)} \geq \sum_{i=1}^k s_{\pi(i)} x_{\pi(i)}$ for all $k \in N$;
\item The function $h(X,Y) := \max_{x \in C} (x(X) - x(Y))$ is bisubmodular and $C = \tilde{B}(h)$.
\end{enumerate}
\label{th_bi}
\end{theorem}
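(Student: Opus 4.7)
The plan is to establish the equivalences by mirroring the cyclic argument used in the proof of Theorem \ref{th_super}, exploiting the bisubmodular analogues (Lemmas \ref{lemma_red_to_bi_1}, \ref{lemma_red_to_bi_2}, and \ref{lemma_even_bi}) that were already set up. I would prove (2) $\Rightarrow$ (4) $\Rightarrow$ (5) $\Rightarrow$ (1) $\Rightarrow$ (2), treat (2) $\Leftrightarrow$ (3) separately, and note that (1) $\Rightarrow$ (2) is trivial since (2) is just (1) with $a = 1$.

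First I would handle the easy directions. The equivalence (2) $\Leftrightarrow$ (3) follows immediately from Lemma~\ref{lemma_maj_abs}, which identifies weak absolute majorization with dominance over the class of element-wise even (equivalently, monotonically even) Schur-convex functions. The implication (2) $\Rightarrow$ (4) is an immediate application of Lemma~\ref{lemma_red_to_bi_1}: every compact convex set is in particular closed and bounded, so the hypothesis (2) yields exactly the maximality condition in (4) for every permutation $\pi$ and every sign vector $s$.

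Next I would prove (4) $\Rightarrow$ (5). By Lemma~\ref{lemma_red_to_bi_2}, the hypothesis (4) applied to the closed bounded set $C$ guarantees that the biset function $h(X,Y) := \max_{x \in C}(x(X)-x(Y))$ is bisubmodular and that $\text{co}(C) = \tilde{B}(h)$. Since $C$ is convex we have $\text{co}(C) = C$, and thus $C = \tilde{B}(h)$, which is (5). For (5) $\Rightarrow$ (1), I would invoke Lemma~\ref{lemma_even_bi}: for any $a \in \mathbb{R}^n_{>0}$ and any $b \in \mathbb{R}^n$, any optimizer of $\min_{x \in \tilde{B}(h)} \sum_{i \in N}\!\bigl(\tfrac{1}{2}\tfrac{x_i^2}{a_i} + b_i x_i\bigr)$ is a least weakly absolutely $(a,b)$-majorized element of $\tilde{B}(h) = C$, and such an optimizer exists because $C$ is compact and the objective is continuous and strictly convex. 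Finally, (1) $\Rightarrow$ (2) is trivial.

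I do not expect a real obstacle here, since all the heavy lifting has already been done in the preceding lemmas; the main task is to verify that the chain of implications closes properly and that the convexity of $C$ is used at the right point (namely to upgrade $\text{co}(C) = \tilde{B}(h)$ from Lemma~\ref{lemma_red_to_bi_2} into $C = \tilde{B}(h)$). The only subtle point to double-check is that the classes of objective functions appearing in Lemmas~\ref{lemma_maj_abs} (element-wise even Schur-convex / even convex) and in Lemma~\ref{lemma_red_to_bi_1} (continuous convex max-functions used for the reduction to maximal signed nested sums) are both covered by weak absolute $(1,b)$-majorization, which is exactly what Lemma~\ref{lemma_leastab}(4) ensures. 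With these checks in place, the proof reduces to a short bookkeeping argument.
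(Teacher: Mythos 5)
Your proposal is correct and matches the paper's proof essentially verbatim: the paper establishes the same cycle (2) $\Rightarrow$ (4) via Lemma~\ref{lemma_red_to_bi_1}, (4) $\Rightarrow$ (5) via Lemma~\ref{lemma_red_to_bi_2} (using convexity to turn $\text{co}(C)=\tilde{B}(h)$ into $C=\tilde{B}(h)$), (5) $\Rightarrow$ (1) via Lemma~\ref{lemma_even_bi}, with (2) a special case of (1) and (2) $\Leftrightarrow$ (3) from Lemma~\ref{lemma_maj_abs}. No gaps.
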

\begin{proof}
(2) is a special case of (1); (2) and (3) are equivalent due to Lemma~\ref{lemma_maj_abs}; (2) implies (4) via Lemma~\ref{lemma_red_to_bi_1}; (4) implies (5) via Lemma~\ref{lemma_red_to_bi_2}; (5) implies (1) via Lemma~\ref{lemma_even_bi}.
\end{proof}

\begin{theorem}
Let $C \subset \mathbb{Z}^n$ be a bounded integral hole-free set. The following statements are equivalent:
\begin{enumerate}
    \item $C$ has a least weakly absolutely $(1,b)$-majorized element for each $b \in \mathbb{Z}^n$;
\item
For each $b \in \mathbb{Z}^n$, there exists $x^* \in C$ that is an optimal solution to $\min_{x \in C} \Phi(x+b)$ for any choice of    monotonically    even    continuous    Schur-convex function $\Phi$;
\item  For each permutation $\pi$ of $N$ and sign vector $s \in \lbrace -1,1 \rbrace^n$ there exists $x^* \in C$ such that for all $x \in C$ we have $\sum_{i=1}^k s_{\pi(i)} x^*_{\pi(i)} \geq \sum_{i=1}^k s_{\pi(i)} x_{\pi(i)}$ for all $k \in N$;
\item The function $h(X,Y) := \max_{x \in C} (x(X) - x(Y))$ is integral and bisubmodular and $C = \tilde{B}^{\mathbb{Z}}(h)$.
\end{enumerate}
\label{th_bi_int}
\end{theorem}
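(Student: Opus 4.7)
The plan is to follow exactly the same chain of implications as in the proof of Theorem~\ref{th_bi}, replacing each supporting lemma by its integral analogue and taking care that the auxiliary shifting vector constructed along the way remains integral.

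First, (1) and (2) are equivalent by Lemma~\ref{lemma_maj_abs}: a least weakly absolutely $(1,b)$-majorized element minimizes every separable even continuous convex function of $x_i+b_i$ over $C$, which by the characterization in Lemma~\ref{lemma_maj_abs} is equivalent to minimizing $\Phi(x+b)$ over $C$ for every element-wise even Schur-convex~$\Phi$. Note that restricting to $b \in \mathbb{Z}^n$ is harmless because, given any real shift $b$, one can approximate and use compactness, but we only need the direction stated.

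For (2) $\Rightarrow$ (3), the plan is to rerun the proof of Lemma~\ref{lemma_red_to_bi_1} verbatim, observing that when $C \subset \mathbb{Z}^n$ is bounded the quantities $l_i := \min_{x \in C} x_i$ and $u_i := \max_{x \in C} x_i$ are integers. Hence the vectors $\hat l, \hat u$ and the recursively defined shift $\hat b$ are all integer-valued, so the assumption that (2) provides a least weakly absolutely $(1,\hat b)$-majorized element applies. The piecewise linear max-functions $\hat\phi^k$ from that proof are continuous, convex, and even, so Lemma~\ref{lemma_maj_abs} lets us conclude, exactly as before, that the resulting $x^*\in C$ satisfies the nested partial-sum inequalities in (3).

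For (3) $\Rightarrow$ (4), we apply Lemma~\ref{lemma_red_to_bi_2} to conclude that the set function $h(X,Y) := \max_{x \in C}(x(X)-x(Y))$ is bisubmodular and $\mathrm{co}(C) = \tilde B(h)$. Since $C \subset \mathbb{Z}^n$, the maxima defining $h$ are attained at integer vectors, so $h$ is integral. Because $C$ is bounded and hole-free, $C = \mathrm{co}(C) \cap \mathbb{Z}^n = \tilde B(h) \cap \mathbb{Z}^n = \tilde B^{\mathbb{Z}}(h)$. Finally, (4) $\Rightarrow$ (1) is the integral half of Lemma~\ref{lemma_even_bi}: any optimal solution to $\min_{x \in \tilde B^{\mathbb{Z}}(h)} \sum_{i\in N} \bigl(\tfrac{1}{2}x_i^2 + b_i x_i\bigr)$ is a least weakly absolutely $(1,b)$-majorized element of $\tilde B^{\mathbb{Z}}(h) = C$.

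The main obstacle is checking that the construction in Lemma~\ref{lemma_red_to_bi_1} truly preserves integrality, i.e., that the recursively defined entries of $\hat b$ stay in $\mathbb{Z}$ when $\hat l, \hat u \in \mathbb{Z}^n$ (they do, since $\hat b_{\pi(n)} = -\hat l_{\pi(n)} - s_{\pi(n)}$ and each subsequent entry is an integer combination of previously defined integers and $\pm 1$), and that Lemma~\ref{lemma_red_to_bi_2} together with hole-freeness really forces $C = \tilde B^{\mathbb{Z}}(h)$ rather than merely $C \subseteq \tilde B^{\mathbb{Z}}(h)$. The latter follows because $\tilde B^{\mathbb{Z}}(h) = \tilde B(h) \cap \mathbb{Z}^n = \mathrm{co}(C) \cap \mathbb{Z}^n$ and hole-freeness of $C$ gives $\mathrm{co}(C) \cap \mathbb{Z}^n = C$. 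Thus the whole cycle of implications closes with no new ideas beyond those already developed for the real case in Theorem~\ref{th_bi}.
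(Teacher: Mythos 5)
Your proposal is correct and follows essentially the same route as the paper, which simply declares the proof analogous to that of Theorem~\ref{th_bi}: the chain (1)$\Leftrightarrow$(2) via Lemma~\ref{lemma_maj_abs}, (2)$\Rightarrow$(3) via Lemma~\ref{lemma_red_to_bi_1}, (3)$\Rightarrow$(4) via Lemma~\ref{lemma_red_to_bi_2} plus hole-freeness, and (4)$\Rightarrow$(1) via the integral half of Lemma~\ref{lemma_even_bi}. You also supply the two details the paper leaves implicit --- that the recursively built shift $\hat b$ remains integral and that $\tilde B(h)\cap\mathbb{Z}^n=\mathrm{co}(C)\cap\mathbb{Z}^n=C$ --- and both checks are accurate.
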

\begin{proof}
The proof is analogous to that of Theorem~\ref{th_bi}.
\end{proof}

We conclude this section by noting that in all Theorems~\ref{th_super}-\ref{th_bi_int}, the definition of least majorized elements can be relaxed to \emph{continuously differentiable} convex functions due to Remarks~\ref{remark_diff} and~\ref{remark_diff_2}.

\section{Applications}
\label{sec_appl}

In this section, we demonstrate the impact of our characterization results have in several fields other than combinatorial optimization. In particular, we highlight the insights that the results provide in three application areas, namely power management, cooperative game theory, and regularized regression.

\subsection{Energy storage scheduling and power management}

To illustrate the impact of our characterization results on RAPs in power management applications, we take as example the energy storage scheduling problem described in \cite[Section~5.2]{SchootUiterkamp2021}. Given a time horizon consisting of $n$ equidistant time intervals of length $\Delta t$ indexed by the set $N := \lbrace 1,\ldots, n \rbrace$ we determine for each interval $i \in N$ the (dis)charging power $x_i$ of the storage system during this interval. The goal is to optimize a given system objective function of the form $\Phi (x + p)$, where $p \in \mathbb{R}^n$ denotes the uncontrollable energy usage of the system and its environment (e.g., a household or neighborhood). Given the initial and target amounts of energy $S_{\text{start}}$ and $S_{\text{end}}$ at the start and end of the time horizon, respectively, the scheduling problem is formulated as follows:
\begin{align*}
\min_{x \in \mathbb{R}^n} \ &  \Phi (x + p) \\
\text{s.t. } & 0 \leq S_{\text{start}} + \Delta t \cdot  x(\lbrace 1, \ldots, j \rbrace) \leq D, \quad j \in N  \backslash \lbrace n \rbrace, \\
& S_{\text{start}} + \Delta t \cdot x(N) = S_{\text{end}}, \\
& X_{\text{min}} \leq x_i \leq X_{\text{max}}, \quad i \in N.
\end{align*}
The feasible region of this problem is a base polyhedron and thus Lemma~\ref{lemma_reduction} (Condition~1 and Theorem~1 in \cite{SchootUiterkamp2021}) applies. Using this observation, it was established that the following objectives can be optimized simultaneously:
\begin{itemize}
\item Minimizing exchange with the main grid: $\Phi(x + p) = \sum_{i \in N} |x_i + p_i|$;
\item Load profile flattening: $\Phi(x + p) = \sum_{i \in N} (x_i + p_i)^2$;
\item Peak shaving under a threshold $M$: $\Phi(x + p) = \sum_{i \in N} \max(0,  \underline{f}(x_i + p_i))$ where $\underline{f}$ is a convex non-decreasing function with $\underline{f}(M) = 0$.
\end{itemize}

Now, utilizing our characterization result in Theorem~\ref{th_base}, we expand upon this result in two directions. First, we may extend the above collection of equivalent objectives with general norms. In particular, to model the widely used objective of (general) peak-shaving, i.e., without a threshold $M$, we may consider the max-norm $\Phi(x + p) = \max_{i \in N} |x_i + p_i|$. We may thus conclude that a solution that optimizes the (quadratic) objective of load profile flattening also optimizes the general peak-shaving objective.

Second, however, the characterization also suggests that these properties might not hold anymore for extensions and variants of the basic energy storage model that destroy the submodular structure. Examples of these are the inclusion of conversion losses \cite{Guo2013} or restricting the charging rate to a finite set of base rates \cite{vanderKlauw2017}. For these cases, there exist Schur-convex choices of $\Phi$ for which an optimizer of the load profile flattening objective is not optimal. An interesting question for future work is to investigate whether this includes the aforementioned choices for $\Phi$ that are relevant in energy storage scheduling.

Finally, we note that the observations in this subsection also apply to
other device scheduling problems such as electric vehicles and heat pumps \cite{vanderKlauw2017}, as well as to other power management problems with submodular structure, including power allocation in multichannel communication systems, vessel speed optimization, and speed scaling (see also \cite[Section~5]{SchootUiterkamp2021}).

\subsection{Convex cooperative games with transferable utility}

In cooperative game theory, one important subclass of games consists of those with transferable utility (TU). These games are defined by a player set $N$ (the grand coalition) and a set function $v$ that assigns a value to each coalition. The core of the game consists of all payoff allocations where no coalition has an incentive to split from the grand coalition $N$, meaning that each coalition $A \in 2^N$ receives at least their value $v(A)$ and the total payoff equals the value $v(N)$ of the grand coalition. A TU cooperative game is called convex if its value function is supermodular, in which case its core is a base polyhedron. Several characterizations of convexity of a game exist, for example in terms of extreme points of the core \cite{Ichiishi1981}. Here, we obtain a new characterization of convex games in terms of the existence of so-called egalitarian solutions \cite{Dutta1989}. Egalitarian solutions aim to distribute the payoff of the grand coalition as equally as possible over the players. By definition, such solutions are exactly the least majorized elements of the considered allocation space (see, e.g., \cite{Arin2003}). When considering egalitarian solutions that are restricted to the core, Theorem~\ref{th_base} directly gives us the following characterization of convex games:

\begin{theorem}
Let $(N,v)$ be a TU cooperative game with $n$ players. Then $(N,v)$ is a convex game if and only if for each $b \in \mathbb{R}^n$ the game $(N,v^b)$ has an egalitarian core solution, where $v^b(A) := v(A) + b(A)$ for all $A \in 2^N$.
\label{col_game}
\end{theorem}
\begin{proof}
This result is a reformulation of the equivalence between parts (2) and (4) of Theorem~\ref{th_base}.
\end{proof}

One interesting question is whether the characterization result in Theorem~\ref{th_bi} for bisubmodular polyhedra can be used to characterize bicooperative games \cite{Bilbao2008}, analogously to Theorem~\ref{col_game}. A first step for this, which we leave for future work, would be to find a suitable definition of egalitarian solutions for this type of game and investigate how this definition corresponds with least weakly absolute majorization.

\subsection{Regularized regression estimators}

Regression is an important and widely used method for establishing relationships between dependent and independent variables. In regression problems, regularization is often applied to enforce a specific desirable structure among the regression coefficients (see, e.g., \cite[Chapter~3]{Hastie2009}. Examples of this are reduce the number of relevant independent variables (structured sparsity) or to adhere to a known structure within the input data (e.g., spatio-temporal relationships between independent variables). 

The goal of this section is to demonstrate the potential of our characterization results in establishing new properties of common regression estimators such as the LASSO and ridge regression estimators. To this end, we consider the regularized least squares problem with input matrix $X \in \mathbb{R}^{m \times n}$, vector $y \in \mathbb{R}^m$ of outcomes, a vector $\beta \in \mathbb{R}^n$ of coefficients, a regularizer $\Omega$, and a regularization parameter $t$:
\begin{equation}
\min_{\beta \in \mathbb{R}^n} \ ||y - X\beta||_2^2 \quad \text{s.t. } \Omega(\beta) \leq t.
\label{prob_regression}
\end{equation}
Several special cases of Problem~(\ref{prob_regression}) and the corresponding regression methods are obtained when $\Omega$ is a $p$-norm with $p=1$ (LASSO), $p=2$ (ridge regression), and $p = \infty$ (max-norm regression). We also consider the following problem, which is closely related to the regularized regression problem for general loss functions $\Phi$:
\begin{align}
\min_{\beta \in \mathbb{R}^n} \ \Phi(X^{\top} y - \beta) \quad \text{s.t. } \Omega(\beta) \leq t
\label{prob_regression_schur}
\end{align}

One important application of regression problems is when performing design experiments. In such an experiment, the behavior of a system and the impact of independent variables is learned by choosing as input specific combinations of settings of the independent variables, rather of observed measurement data. When testing for all combinations of settings for a given subset of the independent variables, the resulting design is called orthogonal and the input matrix $X$ is orthonormal, i.e., $X^{\top} X = I$ (see, e.g., \cite{Bailey2008,Zurovac2012}). We show in Theorem~\ref{th_regression} that in this case the optimal estimator $\beta^*$ of the regularized regression problem~(\ref{prob_regression}) is also optimal for Problem~(\ref{prob_regression_schur}) for particular combinations of regularizers $\Omega$ and loss functions $\Phi$:

\begin{theorem}
Let $\beta^*$ denote the optimal estimator in Problem~(\ref{prob_regression}). If the input matrix is orthonormal, the following hold:
\begin{enumerate}
\item
For $\Omega(\beta) = ||\beta||_1$ or $\Omega(\beta) = ||\beta||_{\infty} = \max_{i \in N} |\beta_i|$, $\beta^*$ is also an optimal estimator for Problem~(\ref{prob_regression_schur}) for any choice of monotonically even continuous Schur-convex function $\Phi$;
\item
If $\Omega(\beta) = ||\beta||_2$, there exists an outcome vector $y \in \mathbb{R}^n$ and a continuous Schur-convex function $\Phi$ that is either non-increasing, non-decreasing, or monotonically even such that $\beta^*$ is not an optimal estimator for Problem~(\ref{prob_regression_schur});
\end{enumerate}
\label{th_regression}
\end{theorem}
\begin{proof}
If $X^{\top} X = I$, the objective function of Problem~(\ref{prob_regression}) reduces to
\begin{align*}
||y - X\beta||_2^2
&=
y^{\top} y - 2y^{\top} X\beta + \beta^{\top} X^{\top} X \beta
=
y^{\top} X^{\top} X y - 2y^{\top} X\beta + \beta^{\top} \beta \\
&=
|| X^{\top} y - \beta||_2^2
= \sum_{i \in N} (x_i^{\top} y - \beta_i)^2,
\end{align*}
where $x_i$ is the $i^{\text{th}}$ column of $X$. Thus, $\beta^*$ optimizes a separable quadratic function over the region defined by $\Omega(\beta)$. This allows us to prove each part of the lemma as follows:
\begin{enumerate}
\item
Each of the regions defined by the constraints $||\beta||_1 \leq t$ and $||\beta||_{\infty} \leq t$ can be reformulated as a bisubmodular polyhedron $\tilde{B}(h)$ where $h(X,Y) = t$ and $h(X,Y) = t |X \cup Y|$ for all $(X,Y) \in 3^N$, respectively. In both cases, it follows from Lemma~\ref{lemma_even_bi} that $\beta^*$ is the (unique) least weakly absolutely $(1,X^{\top} y)$-majorized element of $\tilde{B}(h)$ and the result follows from the characterization in Theorem~\ref{th_bi}.

\item
The region defined by the constraint $||\beta||_2 \leq t$ is not polyhedral and thus cannot be a base or bisubmodular polyhedron. Moreover, it cannot simultaneously be contained in a sub- or supermodular polyhedron and contain the corresponding base polyhedron. The result follows from the characterizations in Theorems~\ref{th_super}, \ref{th_sub}, and \ref{th_bi}.

\end{enumerate}
\end{proof}

The objective function of Problem~(\ref{prob_regression_schur}) closely resembles that of a regression problem with Schur-convex loss function. An interesting question, which we leave for future work, is whether Theorem~\ref{th_regression} can be extended to such problems.

\section{Conclusions and directions for future research}
\label{sec_concl}

In this article, we studied the existence of solutions to optimization that optimize whole classes of utility or cost functions simultaneously. In particular, motivated by applications in power management, we aimed to characterize the set of problems for which such solutions, called least majorized elements, exist. To answer this question, we introduced a new natural generalization of majorization that is determined by two input vectors $a$ and $b$, called $(a,b)$-majorization, and the corresponding least $(a,b)$-majorized elements. We showed that such elements exist for any valid choice of $a$ and $b$ if and only if the feasible set of the optimization problem is a base polyhedron. Similar characterizations were obtained for weaker concepts of least $(a,b)$-majorized elements and sets related to base polyhedra such as submodular, supermodular, and bisubmodular polyhedra. On the one hand, these characterizations reveal new and unique properties of these polyhedra. On the other hand, our results suggest that for most optimization problems arising in applications, no solutions exists that simultaneously optimize classes of objective functions. Although this observation is usually easy to confirm empirically for a given optimization problem, we now provide theoretical insight into why this is the case.

Given an optimization problem whose feasible set does not fall in one of the classes described above, the perhaps next best thing that one might hope for is the existence of solutions that simultaneously \emph{approximately minimize} entire classes of objective functions, instead of \emph{minimize}. Therefore, our main direction of future research will be to investigate the former topic further. More specifically, we will try to find a complete characterization of problems that do have such simultaneously approximately minimizing solutions, parametrized by the desired error factor or term of approximation. Based on existing work in this direction (e.g., \cite{Goel2006}), we expect that such a characterization will include very general classes of resource allocation problems and will thus be useful in many applications in telecommunications and energy-efficient scheduling.

We conclude this article by listing three other directions for future research:

\begin{enumerate}
\item
One limitation of this work is that we focused on the case where least $(a,b)$-majorized elements exist for \emph{each} choice of $a$ and $b$. Moreover, the definition of these elements requires them to be optimal for \emph{any} choice of separable convex objective structure. Thus, it is interesting to investigate whether the existence of least $(a,b)$-majorized elements for specific (sets of) values of $a$ and $b$ can be characterized.  In particular, an interesting question is whether such a characterization exists for $b=0$, i.e., when we only consider scaled objective functions. 

\item
Another limitation is that we focused on the case where the feasible sets are either compact and convex or bounded and hole-free, i.e., contain exactly all integral points in its convex hull. An interesting direction for future research is to investigate whether our results can be extended to feasible sets that do not have such a convexity property. Promising candidates for such sets and corresponding optimization problems include extensions of base polyhedra and RAPs, e.g., jump systems \cite{Bouchet1995} and semi-continuous knapsack polytopes \cite{deFarias2013,SchootUiterkamp2018}. Another promising class of problems consists of those whose structure and optimal solutions depend primarily on the convexity of the objective function rather than explicitly on the function itself, e.g., discrete speed scaling problems \cite{Gerards2016}.

\item
Our results indicate a natural connection between $(a,b)$-majorization and base polyhedra and between special cases of $(a,b)$-majorization and generalizations of base polyhedra. A natural follow-up question is whether more of such pairs exist. A first class of suitable candidates for sets in such a pair are polyhedra that are in some sense obtained from submodular or base polyhdra, examples of which not already discussed in this article are polybasic polyhedra \cite{Fujishige2004}, skew-bisubmodular polyhedra \cite{Fujishige2014}, and $k$-submodular polyhedra \cite{Huber2012}. This is because all our characterization results depend on the existence of least $(a,b)$-majorized elements for base polyhedra.

Another way to identify promising candidate sets are those for which a greedy algorithm in the style of Edmonds' \cite{Edmonds2003} is optimal for linear optimization. This can be seen by considering the role of linear optimization in the conditional gradient method for solving convex optimization problems with continuously differentiable objective functions (see also \cite{Bach2013,Jaggi2013}). It can be shown that, when optimizing the function $\sum_{i \in N} a_i \phi (\frac{x_i + b_i}{a_i} )$ over a base (bisubmodular) polyhedron for given vectors $a$ and $b$, there exists a sequence of iterate solutions that is a valid possible outcome of the classical Frank-Wolfe algorithm \cite{Frank1956} for any choice of continuously differentiable (even) convex function $\phi$. We expect that similar analyses can be done for polyhedra for which similar greedy algorithms are optimal for linear optimization such as (generalized) skew-bisubmodular polyhedra \cite{Fujishige2014} and polyhedra    defined    by Monge \cite{Burkard1996} and greedy \cite{Faigle2011} matrices.
\end{enumerate}

\bibliographystyle{plain}
\bibliography{library_maj}

\end{document}